\documentclass[a4paper]{amsart}

\usepackage{amssymb}
\usepackage[all]{xy}
\usepackage{hyperref,aliascnt}
\usepackage{mathtools,array}



\numberwithin{equation}{section}

\newtheorem{lma}{Lemma}[section]

\newaliascnt{thmCt}{lma}
\newtheorem{thm}[thmCt]{Theorem}
\aliascntresetthe{thmCt}

\newaliascnt{corCt}{lma}
\newtheorem{cor}[corCt]{Corollary}
\aliascntresetthe{corCt}

\newaliascnt{prpCt}{lma}
\newtheorem{prp}[prpCt]{Proposition}
\aliascntresetthe{prpCt}

\newtheorem*{thm*}{Theorem}
\newtheorem*{cor*}{Corollary}
\newtheorem*{prp*}{Proposition}

\theoremstyle{definition}

\newaliascnt{pgrCt}{lma}
\newtheorem{pgr}[pgrCt]{}
\aliascntresetthe{pgrCt}

\newaliascnt{dfnCt}{lma}
\newtheorem{dfn}[dfnCt]{Definition}
\aliascntresetthe{dfnCt}

\newaliascnt{rmkCt}{lma}
\newtheorem{rmk}[rmkCt]{Remark}
\aliascntresetthe{rmkCt}

\newaliascnt{rmksCt}{lma}

\aliascntresetthe{rmksCt}

\newaliascnt{qstCt}{lma}
\newtheorem{qst}[qstCt]{Question}
\aliascntresetthe{qstCt}

\newaliascnt{pbmCt}{lma}
\newtheorem{pbm}[pbmCt]{Problem}
\aliascntresetthe{pbmCt}

\newaliascnt{exaCt}{lma}
\newtheorem{exa}[exaCt]{Example}
\aliascntresetthe{exaCt}

\newaliascnt{ntnCt}{lma}
\newtheorem{ntn}[ntnCt]{Notation}
\aliascntresetthe{ntnCt}


\newcommand{\NN}{\mathbb{N}}
\newcommand{\QQ}{\mathbb{Q}}

\newcommand{\CC}{\mathbb{C}}


\newcommand{\pom}{positively ordered monoid}

\newcommand{\ca}{$C^*$-al\-ge\-bra}

\newcommand{\axiomO}[1]{(O#1)}

\DeclareMathOperator{\Lat}{Lat}

\DeclareMathOperator{\Cu}{Cu}

\DeclareMathOperator{\ev}{ev}

\DeclareMathOperator{\id}{id}

\newcommand{\freeVar}{\_\,}

\usepackage{stmaryrd}
\newcommand{\ihom}[1]{\llbracket #1 \rrbracket}




\newcommand{\CatCu}{\ensuremath{\mathrm{Cu}}}
\newcommand{\CatCuMor}{\CatCu}
\newcommand{\CatCuBimor}{\mathrm{Bi}\CatCu}

\newcommand{\CatQ}{\mathcal{Q}}

\newcommand{\NNbar}{\overline{\mathbb{N}}}
\newcommand{\PPbar}{\overline{\mathbb{P}}}
\DeclareMathOperator{\ltIso}{l}
\DeclareMathOperator{\rtIso}{r}
\DeclareMathOperator{\iIso}{i}
\DeclareMathOperator{\unit}{d}
\DeclareMathOperator{\counit}{e}

\newcommand{\tensExt}{\boxtimes}
\newcommand{\pathCu}[1]{\mathbf{#1}}
\newcommand{\andSep}{\,\,\,\text{ and }\,\,\,}

\newcommand{\CuSgp}{$\CatCu$-sem\-i\-group}
\newcommand{\CuSrg}{$\CatCu$-sem\-i\-ring}
\newcommand{\CuMor}{$\CatCu$-mor\-phism}

\begin{document}

\title{Abstract bivariant Cuntz semigroups II}
\author{Ramon Antoine}
\author{Francesc Perera}
\author{Hannes Thiel}

\date{\today}

\address{
Ramon~Antoine, Departament de Matem\`{a}tiques,
Universitat Aut\`{o}noma de Barcelona,
08193 Bellaterra, Barcelona, Spain}
\email[]{ramon@mat.uab.cat}

\address{
Francesc~Perera, Departament de Matem\`{a}tiques,
Universitat Aut\`{o}noma de Bar\-ce\-lo\-na,
08193 Bellaterra, Barcelona, Spain}
\email[]{perera@mat.uab.cat}

\address{
Hannes~Thiel,
Mathematisches Institut,
Universit\"at M\"unster,
Einsteinstrasse 62, 48149 M\"unster, Germany}
\email[]{hannes.thiel@uni-muenster.de}

\subjclass[2010]
{Primary
06B35, 
06F05, 
46L05. 
Secondary
06F25, 
13J25, 
15A69, 
16W80, 
16Y60, 
18D20, 
46M15, 
}

\keywords{Cuntz semigroup, tensor product, continuous poset, $C^*$-algebra}


\begin{abstract}
We previously showed that abstract Cuntz semigroups form a closed symmetric monoidal category.
This automatically provides additional structure in the category, such as a composition and an external tensor product, for which we give concrete constructions in order to be used in applications.

We further analyse the structure of not necessarily commutative $\CatCu$-semi\-rings and we obtain, under mild conditions, a new characterization of solid $\CatCu$-semirings $R$ by the condition that $R\cong\ihom{R,R}$.
\end{abstract}

\maketitle

\section{Introduction}
\label{sec:intro}

The Cuntz semigroup is a geometric refinement of $K$-theory that was introduced by Cuntz \cite{Cun78DimFct} in his groundbreaking studies of simple \ca{s}.
It is a partially ordered semigroup that is constructed from positive elements in the stabilization of the algebra, in a similar way to how the Murray-von Neumann semigroup in $K$-theory is built from projections.

There has been an intensive use of Cuntz semigroups in \ca{} theory, particularly related to classification results.
One of the most prominent instances of this appeared in Toms' example \cite{Tom08ClassificationNuclear} of two simple \ca{s} that are indistinguishable by $K$-theoretic and tracial data, yet their Cuntz semigroups are not order-isomorphic.
For the largest agreeable class of simple, separable, nuclear, finite and $\mathcal{Z}$-stable \ca{s} that can be classified using $K$-theoretic and tracial information, the Cuntz semigroup features prominently since, suitably paired with $K_1$, it is functorially equivalent to the Elliott invariant;
see \cite{BroPerTom08CuElliottConj} and \cite{AntDadPerSan14RecoverElliott}.

Further, for \ca{s} of stable rank one, the Cuntz semigroup has additional fine structure that was key in solving a number of open problems, such as the Blackadar Handelman conjecture and the Global Glimm Halving Problem;
see \cite{AntPerRobThi18arX:CuntzSR1}. 

The formal framework to study Cuntz semigroups of \ca{s} is provided by the category $\CatCu$ of abstract Cuntz semigroups, which was introduced by Coward, Elliott and Ivanescu in \cite{CowEllIva08CuInv}, and which was studied in detail in \cite{AntPerThi18:TensorProdCu} and later in \cite{AntPerThi17arX:AbsBivariantCu}.
The objects of this category, called \CuSgp{s}, are partially ordered semigroups that satisfy the order-theoretic analogues of being a complete topological space without isolated points. 
The $\CatCu$-morphisms are natural models for $^*$-homomorphisms between \ca{s}. 
The following result was established in \cite{AntPerThi18:TensorProdCu} and \cite{AntPerThi17arX:AbsBivariantCu}.

\begin{thm*}
The category $\CatCu$ of abstract Cuntz semigroups is a symmetric monoidal closed category. 
\end{thm*}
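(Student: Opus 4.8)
The plan is to construct a tensor product and an internal hom on $\CatCu$ separately, to link them by a currying adjunction, and to deduce the coherence axioms formally. The organizing principle is that the tensor product $S\tensCu T$ of two \CuSgp{s} should be the object classifying $\CatCu$-bimorphisms out of $S\times T$, that is, maps that are \CuMor{s} in each variable separately and jointly preserve the way-below relation.

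\emph{Tensor product.} Recall that $\CatCu$ is a full reflective subcategory of a larger category $\CatW$ of positively ordered monoids equipped with an auxiliary relation and satisfying weakened versions of the $\CatCu$ axioms, with a $\CatCu$-completion functor $\gamma$ left adjoint to the inclusion. Bimorphisms in $\CatW$ are classified by an algebraic tensor product $S\otimes_\alg T$, built by generators and relations from the symbols $s\otimes t$ subject to bi-additivity and carrying the partial order (and auxiliary relation) generated by the bimorphism inequalities; one then sets $S\tensCu T:=\gamma(S\otimes_\alg T)$. For a \CuSgp{} $U$ the adjunction gives $\CatCu(\gamma(S\otimes_\alg T),U)=\CatW(S\otimes_\alg T,U)$, and the right-hand side is naturally in bijection with the set of $\CatCu$-bimorphisms $S\times T\to U$; hence $S\tensCu T$ has the required universal property. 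The tensor unit is $\NNbar=\Cu(\CC)$: a $\CatCu$-bimorphism out of $\NNbar\times S$ is determined by the single \CuMor{} $1\otimes(-)\colon S\to U$, and every \CuMor{} out of $S$ arises this way.

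\emph{Internal hom.} The naive candidate --- the set of \CuMor{s} $T\to U$ with pointwise order and sum --- is \emph{not} an object of $\CatCu$, because a pointwise supremum of an increasing sequence of \CuMor{s} preserves $0$, addition, order and suprema of increasing sequences but may fail to preserve $\ll$ (e.g.\ $x\mapsto\infty x$ on $[0,\infty]$ is such a supremum). One is therefore led to define $\ihom{T,U}$ as the set of all \emph{generalized} \CuMor{s} $T\to U$, namely the additive, $0$- and order-preserving maps that preserve suprema of increasing sequences, again with pointwise structure; the genuine \CuMor{s} are precisely the generalized ones that in addition preserve $\ll$. The substantial part of the proof is to verify that $\ihom{T,U}$ lies in $\CatCu$. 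Axiom \axiomO{1} holds because a pointwise-increasing sequence of generalized morphisms has a pointwise supremum which is again generalized, the only nontrivial point being the preservation of suprema, which follows from interchanging two increasing suprema and from \axiomO{4} in $U$. Axiom \axiomO{2} is the heart of the matter: one must approximate a generalized morphism $\varphi\colon T\to U$ from below, simultaneously on all of $T$, by a $\ll$-increasing sequence of generalized morphisms with supremum $\varphi$. This forces one first to determine the (genuinely subtle) way-below relation of $\ihom{T,U}$ and then to assemble the approximants from the \axiomO{2}-approximations available in $U$. Axioms \axiomO{3} and \axiomO{4} are then comparatively routine, using the additivity of $\ll$ in $U$.

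\emph{Adjunction and coherence.} A $\CatCu$-bimorphism $\beta\colon S\times T\to U$ gives the assignment $s\mapsto\beta(s,-)\in\ihom{T,U}$; it preserves order, addition and suprema because $\beta$ does so in the first variable, and it preserves $\ll$ by the characterization of the way-below relation of $\ihom{T,U}$ obtained above together with the joint $\ll$-preservation of $\beta$. Conversely, a \CuMor{} $\Phi\colon S\to\ihom{T,U}$ evaluates back to a $\CatCu$-bimorphism $(s,t)\mapsto\Phi(s)(t)$; the two assignments are mutually inverse and natural in $S$, $T$ and $U$. Combined with the universal property of $\tensCu$ this yields the natural isomorphism $\CatCu(S\tensCu T,U)\cong\CatCu(S,\ihom{T,U})$, i.e.\ closedness. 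For the symmetric monoidal structure one argues by comparing representing objects: both $(S\tensCu T)\tensCu V$ and $S\tensCu(T\tensCu V)$ classify $\CatCu$-trimorphisms on $S\times T\times V$, which supplies the associator; the symmetry of bimorphisms supplies the braiding; the computation with $\NNbar$ above supplies the left and right unitors; and the pentagon, triangle and hexagon identities follow from the uniqueness clauses in these universal properties. \textbf{The main obstacle} is \axiomO{2} for $\ihom{T,U}$ --- the simultaneous approximation of a generalized morphism from below --- together with the inseparable task of pinning down the way-below relation of $\ihom{T,U}$ precisely enough that the currying bijection respects it.
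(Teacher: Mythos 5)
Your tensor product construction (algebraic tensor product of the underlying ordered monoids in a weaker category $\CatW$, followed by the $\CatCu$-completion, with unit $\NNbar$) and your formal derivation of the coherence data from universal properties are essentially the route taken in the references. The decisive gap is in the internal hom. You take $\ihom{T,U}$ to be the set $\CatCuMor[T,U]$ of generalized \CuMor{s} with pointwise order and addition, and you correctly isolate \axiomO{2} for this monoid as ``the heart of the matter'' --- but you then assert it can be proved by assembling approximants from the \axiomO{2}-approximations in $U$. This cannot be done: $\CatCuMor[T,U]$ satisfies \axiomO{1} and \axiomO{4} but in general \emph{fails} \axiomO{2}, and even when it happens to satisfy all four axioms its intrinsic way-below relation is not the relation that interacts correctly with $\CatCu$-bimorphisms. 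The relevant relation is the auxiliary relation $\varphi\prec\psi$ defined by ``$\varphi(a')\ll\psi(a)$ whenever $a'\ll a$'', which is generally strictly stronger than the way-below relation of the pointwise order, and a generalized \CuMor{} need not be the supremum of any $\prec$-increasing sequence below it.

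The actual construction replaces $\CatCuMor[T,U]$ by $\ihom{T,U}:=\tau\big(\CatCuMor[T,U],\prec\big)$, the $\CatCu$-semigroup of paths $(f_\lambda)_{\lambda\in I_\QQ}$ with $f_{\lambda'}\prec f_\lambda$ for $\lambda'<\lambda$, modulo mutual domination (\autoref{dfn:absbiv}); this $\tau$-construction is precisely the device that manufactures \axiomO{2} and the correct way-below relation, and the currying bijection of \autoref{thm:Cuclosedbijection} is then mediated by the endpoint map $\sigma_{T,U}$ rather than by literal evaluation. The resulting object is genuinely not a set of maps $T\to U$: distinct elements $x\neq x'$ of $\ihom{T,U}$ can satisfy $x(a)=x'(a)$ for all $a$ (see \autoref{dfn:genProp:counitMap}), and for the simple \CuSgp{} $S$ of \autoref{exa:bivarCu:ihomSSNotSimple} one has $\CatCuMor[S,S]\cong\{0\}\cup[1,\infty]$ while $\ihom{S,S}\cong\{0\}\sqcup[1,\infty]\sqcup(1,\infty]$. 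So any proof that keeps the underlying set equal to $\CatCuMor[T,U]$ produces the wrong adjoint; you need to import the path/$\tau$-completion (or an equivalent round-ification) before the adjunction and the closedness can be established.
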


This means that there are bifunctors
\[
\freeVar\otimes\freeVar\colon\CatCu\times\CatCu\to\Cu, \andSep
\ihom{\freeVar,\freeVar}\colon\CatCu\times\CatCu\to\CatCu,
\]
such that $\otimes$ is associative, symmetric, has as unit object the semigroup $\NNbar=\{0,1,\dots,\infty\}$ and, for any $\CatCu$-semigroup $T$, the functors $\freeVar\otimes T$ and $\ihom{T,\freeVar}$ are an adjoint pair.
For $\CatCu$-semigroups $S$ and $T$, the construction of the internal-hom $\ihom{S,T}$ is based on paths of the so-called generalized $\CatCu$-morphisms, which are natural models for completely positive contractive order-zero maps between \ca{s}; 
see \autoref{sec:prelims} for more details. We refer to the semigroups $\ihom{S,T}$ as bivariant Cu-semigroups. 

Further, we show in \cite{AntPerThi18pre:Productscoproducs} that $\CatCu$ is complete and cocomplete, and that the functor that assigns to each \ca\ its Cuntz semigroup is compatible with products and ultraproducts. 

The fact that $\CatCu$ is a closed category automatically adds additional features well known to category theory (see, for instance, \cite{Kel05EnrichedCat}).
For example, one obtains a \emph{composition product} given in the form of a $\CatCu$-morphism:
\[
\circ\colon\ihom{T,P}\otimes\ihom{S,T}\to\ihom{S,P},
\]
which is the generalization of the composition of morphisms in a category to a notion of composition between internal-hom objects in a closed category; see the comments after \cite[Proposition~5.11]{AntPerThi17arX:AbsBivariantCu}.

Although the said features can be derived from general principles, in our setting they become concrete, and this is very useful in applications.
In this direction, and bearing in mind that $\ihom{S,T}$ is a semigroup built out of paths of morphisms from $S$ to $T$, the composition product can be realized as the composition of paths.
Another important example is the \emph{evaluation map} which, for $\CatCu$-semigroups $S$ and $T$ is a $\CatCu$-morphism $\counit_{S,T}\colon\ihom{S,T}\otimes S\to T$ such that $\counit_{S,T}(x\otimes a)$ can be interpreted as the evaluation of $x\in\ihom{S,T}$ at $a\in S$.
We therefore also write $x(a):=\counit_{S,T}(x\otimes a)$.
The evaluation map can be used to concretize the adjunction between the internal-hom bifunctor and the tensor product;
see \autoref{prp:genProp:correspondence}.

Likewise, the tensor product of generalized $\CatCu$-morphisms induces an \emph{external tensor product}
\[
\tensExt\colon \ihom{S_1,T_1}\otimes\ihom{S_2,T_2}\to\ihom{S_1\otimes S_2, T_1\otimes T_2},
\]
which is associative and, like in $KK$-Theory, compatible with the composition product;
see \autoref{prp:genProp:tensExtComp}.
This means that, for elements $x_k\in\ihom{S_k,T_k}$ and $y_k\in \ihom{T_k,P_k}$ (for $k=1,2$), we have
\[
(y_2\tensExt y_1)\circ (x_2\tensExt x_1)=(y_2\circ x_2)\tensExt (y_1\circ x_1).
\]

In \autoref{sec:bivarCu:functoriality}, we study the ideal structure of bivariant \CuSgp{s}.
Given an ideal $J$ in $S$, and an ideal $K$ in $T$, we show that there is a natural identification of $\ihom{S/J,K}$ with an ideal in $\ihom{S,T}$;
see Propositions~\ref{prp:bivarCu:idealSecondEntry} and~\ref{prp:bivarCu:idealFirstEntry}.
However, in general, not every ideal of $\ihom{S,T}$ arises this way.
Indeed, in \autoref{exa:bivarCu:ihomSSNotSimple} we construct a simple \CuSgp{} $S$ such that $\ihom{S,S}$ is not simple.

In \autoref{sec:ring}, we deepen our study of $\CatCu$-semirings and their semimodules, which was started in \cite[Chapters~7 and~8]{AntPerThi18:TensorProdCu}. 
Given a \CuSgp{} $S$, the composition product turns $\ihom{S,S}$ into a $\CatCu$-semiring;
see \autoref{prp:ring:ihomSS}.
Further, the evaluation map defines natural left $\ihom{S,S}$-action on $S$;
see \autoref{prp:ring:ihomSS_act_S}.
Finally, $\ihom{S,T}$ has both a natural left $\ihom{T,T}$-action and a compatible right $\ihom{S,S}$-action;
see \autoref{prp:ring:ihomST}.

For any $\CatCu$-semiring $R$, the internal-hom construction makes it possible to define a left regular representation-like map $\pi_R\colon R\to\ihom{R,R}$, which is always a multiplicative order-embedding (and unital in case the unit of $R$ is a compact element);
see \autoref{dfn:ring:pi}, \autoref{prp:ring:piMultiplicative} and \autoref{prp:ring:R_sub_ihomRR}.
We study when $\pi_R\colon R\to\ihom{R,R}$ is an isomorphism;
see \autoref{prp:ring:charSolid}.
This is closely related to the property of being \emph{solid}, which means that the multiplication defines an isomorphism between $R\otimes R$ and $R$;
see \cite[Definition~7.1.6]{AntPerThi18:TensorProdCu}.



\section*{Acknowledgements}

This work was initiated during a research in pairs (RiP) stay at the Oberwolfach Research Institute for Mathematics (MFO) in March 2015.
The authors would like to thank the MFO for financial support and for providing inspiring working conditions.

Part of this research was conducted while the third named author was visiting the Universitat Aut\`{o}noma de Barcelona (UAB) in September 2015 and June 2016, and while the first and second named authors visited M\"unster Universit\"at in June 2015 and 2016. Part of the work was also completed while the second and third named authors were attending the Mittag-Leffler institute during the 2016 program on Classification of Operator Algebras: Complexity, Rigidity, and Dynamics.
They would like to thank
all the involved institutions for their kind hospitality.

The two first named authors were partially supported by MINECO (grants No.\ MTM2014-53644-P and No. MTM2017-83487-P), and by the Comissionat per Universitats i Recerca de la Generalitat de Catalunya.
The third named author was partially supported by the Deutsche Forschungsgemeinschaft (SFB 878 Groups, Geometry \& Actions).

\section{Preliminaries}
\label{sec:prelims}


Let $S$ be a positively ordered semigroup. Recall (cf. \cite[Definition~I-1.11, p.57]{GieHof+03Domains}) that an \emph{additive auxiliary relation} on $S$ is a binary relation $\prec$ on $S$ satisfying the following conditions for all $a,a',b,b'\in S$:
\begin{enumerate}
\item
If $a\prec b$ then $a\leq b$.
\item
If $a'\leq a\prec b\leq b'$ then $a'\prec b'$.
\item
We have $0\prec a$.
\item
The relation $\prec$ is compatible with addition.
\end{enumerate}

An important example of an auxiliary relation that we will use in the sequel is the \emph{way-below relation}, originally coming from Domain Theory (see \cite{GieHof+03Domains}):

Let $S$ be a positively ordered semigroup, and let $a,b\in S$.
Recall that $a$ is \emph{way-below} $b$ (we also say that $a$ is \emph{compactly contained in} $b$), denoted $a\ll b$, if whenever $(c_n)_n$ is an increasing sequence in $S$ for which the supremum exists and that satisfies $b\leq \sup_n c_n$, then there exists $k\in\NN$ with $a\leq c_k$.

We say that $a$ is \emph{compact} if $a\ll a$, and we let $S_c$ denote the submonoid of compact elements in $S$.

\begin{dfn}[{\cite{CowEllIva08CuInv}; see also \cite[Definition~3.1.2]{AntPerThi18:TensorProdCu}}]
\label{dfn:prelim:CatCu}
A \emph{$\CatCu$-semigroup}, also called \emph{abstract Cuntz semigroup}, is a positively ordered semigroup $S$ that satisfies the following axioms \axiomO{1}-\axiomO{4}:
\begin{itemize}
\item[\axiomO{1}]
Every increasing sequence $(a_n)_n$ in $S$ has a supremum $\sup_n a_n$ in $S$.
\item[\axiomO{2}]
For every element $a\in S$ there exists a sequence $(a_n)_n$ in $S$ with $a_n\ll a_{n+1}$ for all $n\in\NN$, and such that $a=\sup_n a_n$.
\item[\axiomO{3}]
If $a'\ll a$ and $b'\ll b$ for $a',b',a,b\in S$, then $a'+b'\ll a+b$.
\item[\axiomO{4}]
If $(a_n)_n$ and $(b_n)_n$ are increasing sequences in $S$, then $\sup_n(a_n+b_n)=\sup_n a_n+\sup_n b_n$.
\end{itemize}

A \emph{$\CatCu$-morphism} between \CuSgp{s} $S$ and $T$ is an additive map $f\colon S\to T$ that preserves order, the zero element, the way-below relation and suprema of increasing sequences. In case $f$ is not required to preserve the way-below relation, then we say it is a \emph{generalized $\CatCu$-morphism}. The set of $\CatCu$-morphisms (respectively, generalized  $\CatCu$-morphisms) is denoted by $\CatCuMor(S,T)$ (respectively, by $\CatCuMor[S,T]$).

We let $\CatCu$ be the category whose objects are $\CatCu$-semigroups and whose morphisms are $\CatCu$-morphisms.
\end{dfn}

A notion central to the construction of tensor products is that of bimorphisms, which we now recall.

\begin{dfn}[{\cite[Definition~6.3.1]{AntPerThi18:TensorProdCu}}]
\label{dfn:prelim:CatCuBimor}
Let $S,T$ and $P$ be $\CatCu$-semigroups.
A map $\varphi\colon S\times T\to P$ is a \emph{$\CatCu$-bimorphism} if it satisfies the following conditions:
\begin{enumerate}
\item $\varphi$ is a positively ordered monoid morphism in each variable.
\item
We have that $\sup_k\varphi(a_k,b_k)=\varphi(\sup_k a_k, \sup_k b_k)$, for every increasing sequences $(a_k)_k$ in $S$ and $(b_k)_k$ in $T$.
\item
If $a',a\in S$ and $b',b\in T$ satisfy $a'\ll a$ and $b'\ll b$, then $\varphi(a',b')\ll\varphi(a,b)$.
\end{enumerate}
\end{dfn}

The set of $\CatCu$-bimorphisms is denoted by $\CatCuBimor(S\times T,P)$. Equipped with pointwise order and addition, this set is a \pom. Similarly, the set of $\CatCu$-morphisms between two $\CatCu$-semigroups is also a \pom{}.

\begin{thm}[{\cite[Theorem~6.3.3]{AntPerThi18:TensorProdCu}}]
\label{prp:prelim:tensCu}
Let $S$ and $T$ be $\CatCu$-semigroups.
Then there exists a $\CatCu$-semigroup $S\otimes T$ and a $\CatCu$-bimorphism $\omega\colon S\times T\to S\otimes T$ such that for every $\CatCu$-semigroup $P$ the following universal properties hold:
\begin{enumerate}
\item
For every $\CatCu$-bimorphism $\varphi\colon S\times T\to P$ there exists a (unique) $\CatCu$-morphism $\tilde{\varphi}\colon S\otimes T\to P$ such that $\varphi=\tilde{\varphi}\circ\omega$.
\item
If $\alpha_1,\alpha_2\colon S\otimes T\to P$ are $\CatCu$-morphisms, then $\alpha_1\leq\alpha_2$ if and only if $\alpha_1\circ\omega\leq\alpha_2\circ\omega$.
\end{enumerate}
Thus, for every $\CatCu$-semigroup $P$, the assignment 
\[(\alpha\colon S\otimes T\to P)\mapsto (\alpha\circ\omega\colon S\times T\to P)\] defines a natural isomorphism of positively ordered monoids
\[
\CatCuMor\big( S\otimes T, P \big) \cong \CatCuBimor\big( S\times T, P \big).
\]
\end{thm}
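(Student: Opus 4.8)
\emph{Stage 1: the $\CatPom$-tensor product.}
My plan is to build $S\otimes T$ in two stages — first the algebraic tensor product of positively ordered monoids, then a $\CatCu$-type completion — and to deduce the universal property by composing two simpler ones, the real work being confined to the completion. For the first stage I would form the tensor product $S\odot T$ of $S$ and $T$ in the category of positively ordered monoids: take the free commutative monoid on the set $S\times T$, quotient by the evident bilinearity relations (write $a\otimes b$ for the image of $(a,b)$, and $\omega_0\colon S\times T\to S\odot T$ for the quotient map, which is then additive in each variable and kills $(0,b)$ and $(a,0)$), equip the result with the smallest monoid preorder making $\omega_0$ monotone in each entry, and pass to the antisymmetrization. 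A formal check shows $\omega_0$ is a $\CatPom$-bimorphism through which every $\CatPom$-bimorphism $S\times T\to M$ factors uniquely by a $\CatPom$-morphism. Onto $S\odot T$ I would then put the smallest additive auxiliary relation $\prec$ (in the sense recalled in \autoref{sec:prelims}) containing all pairs $a'\otimes b'\prec a\otimes b$ with $a'\ll a$ in $S$ and $b'\ll b$ in $T$; this encodes the way-below relations of the two factors at the level of the tensor product. Note that every element of $S\odot T$ is a finite sum of elementary tensors, so $\omega_0(S\times T)$ generates $S\odot T$ as a monoid.

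\emph{Stage 2: the $\CatCu$-completion.}
Next I would apply the round-ideal completion $\tau$ to $(S\odot T,\prec)$: $\tau(M,\prec)$ is the set of downward-closed, upward-directed subsets $I\subseteq M$ that are closed under suprema of increasing sequences and $\prec$-rounded (each $a\in I$ admits $b\in I$ with $a\prec b$), restricting to countably generated such subsets as is customary in the $\CatCu$ setting, ordered by inclusion with $I+J:=\{c:c\le i+j,\ i\in I,\ j\in J\}$. There is a $\CatPom$-morphism $j\colon M\to\tau(M,\prec)$ given by $a\mapsto a^{\prec}:=\{b:b\prec a\}$, one has $I=\sup_{x\in I}x^{\prec}$ for every $I$ (so $j(M)$ generates $\tau(M,\prec)$ under suprema of increasing sequences), and $\tau$ has the universal property that any $\CatPom$-morphism from $M$ to (the underlying monoid of) a $\CatCu$-semigroup $P$ sending $\prec$ into the way-below relation of $P$ factors uniquely through $j$ by a $\CatCu$-morphism $\tau(M,\prec)\to P$. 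I would set $S\otimes T:=\tau(S\odot T,\prec)$ and $\omega:=j\circ\omega_0$; that $\omega$ is a $\CatCu$-bimorphism (in particular $a'\ll a$ and $b'\ll b$ force $\omega(a',b')\ll\omega(a,b)$, since then $a'\otimes b'\prec a\otimes b$) follows from the construction of $\tau$ together with axioms \axiomO{1}--\axiomO{4} for $S$ and $T$.

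\emph{Assembling the universal property.}
Given a $\CatCu$-bimorphism $\varphi\colon S\times T\to P$, Stage~1 gives a unique $\CatPom$-morphism $\bar\varphi\colon S\odot T\to P$ with $\bar\varphi\circ\omega_0=\varphi$; condition~(3) of \autoref{dfn:prelim:CatCuBimor} says precisely that $\bar\varphi$ carries $\prec$ into the way-below relation of $P$, so the universal property of $\tau$ produces a unique $\CatCu$-morphism $\tilde\varphi\colon S\otimes T\to P$ with $\tilde\varphi\circ j=\bar\varphi$, whence $\tilde\varphi\circ\omega=\varphi$. For uniqueness of $\tilde\varphi$ among $\CatCu$-morphisms: two candidates agree on $\omega_0(S\times T)$, hence, being additive, agree after composition with $j$ on all of $S\odot T$, hence, preserving suprema of increasing sequences, agree on $S\otimes T$ since $j(S\odot T)$ generates it under such suprema. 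The same two observations give clause~(2): $\alpha_1\circ\omega\le\alpha_2\circ\omega$ forces $\alpha_1\circ j\le\alpha_2\circ j$ on $\omega_0(S\times T)$, hence by additivity on $S\odot T$, hence by continuity $\alpha_1\le\alpha_2$ on $S\otimes T$; the converse is trivial. Thus $\alpha\mapsto\alpha\circ\omega$ is an additive bijection $\CatCuMor(S\otimes T,P)\to\CatCuBimor(S\times T,P)$ whose inverse is order-preserving by clause~(2), hence an isomorphism of positively ordered monoids; naturality in $P$ is immediate from $(\beta\circ\alpha)\circ\omega=\beta\circ(\alpha\circ\omega)$.

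\emph{Main obstacle.}
The technical core is Stage~2: proving that $\tau(S\odot T,\prec)$ really satisfies \axiomO{1}--\axiomO{4} (especially \axiomO{2}) and that $\tau$ enjoys the universal property used above. The difficulty is that $\prec$ on $S\odot T$ is only given by generators, so one must establish ``by hand'' that it is interpolative ($x\prec z$ implies $x\prec y\prec z$ for some $y$) and satisfies a Riesz-type decomposition ($x\prec\sum_i c_i\otimes d_i$ implies $x\le\sum_i x_i$ with $x_i\prec c_i\otimes d_i$), so that finite sums of elementary tensors, together with the increasing suprema that $\prec$ forces, genuinely exhaust $S\otimes T$. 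This calls for a careful analysis of how the generating relations propagate under the axioms of an additive auxiliary relation and how they interact with the (likewise generated) order on $S\odot T$; with that in place, \axiomO{1}--\axiomO{4} and the universal property of $\tau$ follow from the standard domain-theoretic passage from an abstract basis to a continuous poset, and what remains is the bookkeeping above.
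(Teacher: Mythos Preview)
The paper does not prove this statement: it is quoted verbatim from \cite[Theorem~6.3.3]{AntPerThi18:TensorProdCu} as a preliminary result, with no proof given here. So there is no ``paper's own proof'' to compare against in this document.

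That said, your two-stage outline (algebraic $\CatPom$-tensor product, then a completion carrying an auxiliary relation to a genuine \CuSgp) is indeed the strategy of the cited memoir. There, however, the intermediate object is placed in an auxiliary category $\CatW$ (or $\CatPreW$) of positively ordered monoids equipped with an auxiliary relation satisfying weak axioms, and the passage to $\CatCu$ is via a reflection functor $\gamma\colon\CatW\to\CatCu$ built for exactly this purpose. Your use of the symbol $\tau$ is potentially confusing: the $\tau$-construction recalled in \autoref{rmk:Qcategory} of the present paper takes as input a $\CatQ$-semigroup, which in particular must satisfy \axiomO{1}, and the algebraic tensor $S\odot T$ has no reason to admit suprema of increasing sequences. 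What you describe under the name $\tau$ is really a round-ideal completion in the sense of domain theory, which is a different (more general) functor; it is this, or equivalently the $\gamma$ of the memoir, that one needs. You should make that distinction explicit.

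You have correctly located the genuine work: one must show that the generated auxiliary relation on $S\odot T$ is interpolative and well-behaved enough that the completion lands in $\CatCu$ and has the stated universal property. In the memoir this is handled by verifying that $(S\odot T,\prec)$ satisfies the axioms of $\CatW$ and then invoking general properties of the reflection $\gamma$; your sketch of the Riesz-type decomposition and interpolation is exactly the content of those verifications. So the outline is sound, but a complete proof requires either reproducing the $\CatW$-machinery or carrying out the domain-theoretic completion argument in full, neither of which is short.
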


The existence of a natural tensor product turns $\CatCu$ into a symmetric monoidal category;
see \cite[6.3.7]{AntPerThi18:TensorProdCu}.
As mentioned above, the tensor product functor $-\otimes T$ has a right adjoint $\ihom{T,-}$, and thus $\CatCu$ is also a closed category.
We recall some details;
see \cite[Section 3]{AntPerThi17arX:AbsBivariantCu} for a full account.

Let $(S,\prec)$ be an ordered semigroup equipped with an additive auxiliary relation $\prec$, and let $I_{\QQ}=\QQ\cap (0,1)$. A  \emph{path} on $S$ is a map $f\colon I_\QQ\to S$ such that $f(\lambda')\prec f(\lambda)$ whenever $\lambda'<\lambda$. The set of paths on $S$ is denoted by $P(S)$, which becomes a semigroup under pointwise addition. It is often the case that we write $f_\lambda=f(\lambda)$, and refer to a path as $\pathCu{f}=(f_\lambda)_{\lambda\in I_\QQ}$.

Given paths $f,g$ in $P(S)$, write $f\lesssim g$ if for every $\lambda\in I_\QQ$, there is $\mu\in I_\QQ$ such that $f(\lambda)\prec g(\mu)$. Set $f\sim g$ provided $f\lesssim g$ and $g\lesssim f$, and let $\tau(S,\prec):=P(S)/\!\sim$. Let $[f]$ denote the equivalence class of a path $f$. Then $\tau(S,\prec)$ becomes an ordered semigroup by setting $[f]+[g]=[f+g]$ and $[f]\leq [g]$ provided $f\lesssim g$.
It was proved in \cite[Theorem~3.15]{AntPerThi17arX:AbsBivariantCu} that, for $S$ as above, the semigroup $\tau(S)$ is a $\CatCu$-semigroup.

\begin{rmk}
\label{rmk:Qcategory}
The above construction is also referred to as the \emph{$\tau$-construction} in \cite{AntPerThi17arX:AbsBivariantCu}. It defines a functor $\tau\colon\CatQ\to\CatCu$, where $\CatQ$ is the category of positively ordered semigroups $S$ with an additive auxiliary relation $\prec$ that additionally satisfy axioms \axiomO{1} and \axiomO{4}.
In this way, $\tau$ is a coreflector of the inclusion functor $\iota\colon\CatCu\to\CatQ$; see \cite[Theorem~4.12]{AntPerThi17arX:AbsBivariantCu}.
Objects (respectively, morphisms) in the category $\CatQ$ are termed $\CatQ$-semigroups (respectively, $\CatQ$-morphisms).
\end{rmk}

If now $S$ and $T$ are $\CatCu$-semigroups, it is clear that $\CatCuMor[S,T]$ is also an ordered semigroup (with pointwise order and addition), and satisfies axioms \axiomO{1} and \axiomO{4} by taking pointwise suprema of increasing sequences.
Given $\varphi,\psi\in\CatCuMor[S,T]$, we define $\varphi\prec \psi$ provided $\varphi(a')\ll\psi(a)$ whenever $a'\ll a$.
This is easily seen to be an additive auxiliary relation on $\CatCu[S,T]$. Thus, $\CatCuMor[S,T]$ is a $\CatQ$-semigroup in the sense of \autoref{rmk:Qcategory}.

\begin{dfn}[{\cite[Definition 5.3]{AntPerThi17arX:AbsBivariantCu}}]
\label{dfn:absbiv}
Let $S$ and $T$ be \CuSgp{s}. The \emph{internal hom} from $S$ to $T$ is the \CuSgp{}
\[
\ihom{S,T} :=\tau\big( \CatCuMor[S,T],\prec \big).
\]
We also call $\ihom{S,T}$ the \emph{bivariant \CuSgp{}}, or the \emph{abstract bivariant Cuntz semigroup} of $S$ and $T$.
\end{dfn}

The proof that $\CatCu$ is a closed category requires the use of the so-called endpoint map, which is made precise below.

\begin{dfn}[{\cite[Definition 5.5]{AntPerThi17arX:AbsBivariantCu}}]
\label{dfn:endpoint}
Let $S$ and $T$ be \CuSgp{s}.
We let $\sigma_{S,T}\colon\ihom{S,T}\to\CatCuMor[S,T]$ be defined by
\[
\sigma_{S,T}([\pathCu{f}])(a)
= \sup_{\lambda\in I_\QQ} f_\lambda(a),
\]
for a path $\pathCu{f}=(f_\lambda)_\lambda$ in $\CatCu[S,T]$ and $a\in S$.
We refer to $\sigma_{S,T}$ as the \emph{endpoint map}.
\end{dfn}

\begin{thm}[{\cite[Theorem 5.9]{AntPerThi17arX:AbsBivariantCu}}]
\label{thm:Cuclosedbijection} 
Let $S, T$ and $P$ be \CuSgp{s}.
Then there are natural positively ordered monoid isomorphisms
\[
\CatCuMor\big( S, \ihom{T,P} \big)
\cong \CatCuBimor\big( S\times T, P \big)
\cong \CatCuMor\big( S\otimes T, P \big).
\]
The first isomorphism is given by 
\[
(\alpha\colon S\to\ihom{T,P})\mapsto (\tilde{\alpha}\colon S\times T\to P),
\] where $\tilde{\alpha}(a,b)=\sigma_{T,P}(\alpha(a))(b)$, for $(a,b)\in S\times T$.
The second is given by 
\[
(\beta\colon S\otimes T\to P)\mapsto ((a,b)\mapsto\beta(a\otimes b)),\text{ for }(a,b)\in S\times T.
\]
\end{thm}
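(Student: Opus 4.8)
The plan is to first dispose of the second isomorphism and then to build the first one by factoring it through the $\CatQ$-semigroup $(\CatCuMor[T,P],\prec)$. The isomorphism $\CatCuBimor(S\times T,P)\cong\CatCuMor(S\otimes T,P)$, together with the asserted formula $\beta\mapsto\big((a,b)\mapsto\beta(a\otimes b)\big)$, is exactly the universal property of the $\CatCu$-tensor product recorded in \autoref{prp:prelim:tensCu}; nothing more is needed there. So the real work is the isomorphism $\CatCuMor(S,\ihom{T,P})\cong\CatCuBimor(S\times T,P)$, which I would split as $\CatCuMor(S,\ihom{T,P})\cong\CatQMor\big(\iota S,(\CatCuMor[T,P],\prec)\big)\cong\CatCuBimor(S\times T,P)$.

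For the left-hand isomorphism I would use that $\ihom{T,P}=\tau(\CatCuMor[T,P],\prec)$ and that $\tau\colon\CatQ\to\CatCu$ is right adjoint to the inclusion $\iota\colon\CatCu\to\CatQ$, by the coreflection result quoted in \autoref{rmk:Qcategory} (see \cite[Theorem~4.12]{AntPerThi17arX:AbsBivariantCu}). Evaluating this adjunction at the $\CatQ$-semigroup $Q=(\CatCuMor[T,P],\prec)$ gives a natural isomorphism of positively ordered monoids $\CatCuMor(S,\tau Q)\cong\CatQMor(\iota S,Q)$ under which a $\CatCu$-morphism $\alpha$ corresponds to $\varepsilon_Q\circ\iota(\alpha)$, where $\varepsilon_Q\colon\iota\tau Q\to Q$ is the counit. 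The point to pin down here---and this is the step I expect to be the most delicate, since it is what produces the claimed formula---is that $\varepsilon_Q$ is implemented by the endpoint map $\sigma_{T,P}\colon\ihom{T,P}\to\CatCuMor[T,P]$ of \autoref{dfn:endpoint}; granting this, $\alpha$ corresponds to the $\CatQ$-morphism $a\mapsto\sigma_{T,P}(\alpha(a))$.

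For the right-hand isomorphism I would show directly that $\Phi(g)(a,b):=g(a)(b)$ defines an isomorphism of positively ordered monoids $\CatQMor\big(\iota S,(\CatCuMor[T,P],\prec)\big)\to\CatCuBimor(S\times T,P)$, with inverse $\varphi\mapsto\big(a\mapsto\varphi(a,\freeVar)\big)$. Each verification uses a single clause of \autoref{dfn:prelim:CatCuBimor}: that $\varphi(a,\freeVar)$ is a generalized $\CatCu$-morphism and that $a\mapsto\varphi(a,\freeVar)$ carries the way-below relation of $S$ into $\prec$ follow from clauses~(1) and~(3); in the other direction, that $\Phi(g)$ is a positively ordered monoid morphism in each variable is immediate from $g$ being additive and order preserving with each $g(a)$ a generalized $\CatCu$-morphism, clause~(3) for $\Phi(g)$ follows from $g$ being a $\CatQ$-morphism (hence carrying $\ll$ to $\prec$) unwound through the definition of $\prec$ on $\CatCuMor[T,P]$, and clause~(2) for $\Phi(g)$ follows from $g$ preserving suprema of increasing sequences (which in $\CatCuMor[T,P]$ are computed pointwise) together with each $g(a)$ preserving suprema, after the routine interchange of a double supremum by splitting the index pairs into those with $j\leq k$ and those with $j\geq k$. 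Additivity, order preservation and order reflection of $\Phi$ are immediate because addition and order on $\CatCuMor[T,P]$ and on the bimorphism monoid are entrywise, and the two assignments are visibly mutually inverse.

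Composing the two isomorphisms of the previous paragraphs yields the desired natural positively ordered monoid isomorphism $\CatCuMor(S,\ihom{T,P})\cong\CatCuBimor(S\times T,P)$ sending $\alpha$ to $(a,b)\mapsto\sigma_{T,P}(\alpha(a))(b)$, and concatenating with \autoref{prp:prelim:tensCu} gives the full chain with both stated formulas; naturality in $S$, $T$ and $P$ is inherited from naturality of the adjunction, of $\sigma_{T,P}$, of $\Phi$ and of the tensor product. As noted, the main obstacle is the identification of the counit with the endpoint map. If one wished to avoid the abstract adjunction altogether, the first isomorphism could instead be constructed by hand, and then the delicate point becomes surjectivity: given a bimorphism $\varphi$ and $a\in S$, one takes a rapidly increasing sequence $a_n\ll a_{n+1}$ with $\sup_n a_n=a$ (from \axiomO{2}), notes that $(\varphi(a_n,\freeVar))_n$ is a $\prec$-increasing sequence of generalized $\CatCu$-morphisms, reindexes it over $I_\QQ$ to a path representing an element $\alpha(a)\in\ihom{T,P}$, and checks that this class is independent of all choices and that $a\mapsto\alpha(a)$ is a $\CatCu$-morphism inducing $\varphi$.
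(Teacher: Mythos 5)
This paper does not actually prove the statement—it is quoted from \cite[Theorem~5.9]{AntPerThi17arX:AbsBivariantCu}—and your reconstruction follows essentially the same route as that reference: the second isomorphism is exactly the universal property of the tensor product (\autoref{prp:prelim:tensCu}), while the first is obtained by composing the coreflection adjunction $\CatCuMor(S,\tau Q)\cong\CatQMor(\iota S,Q)$ for $Q=(\CatCuMor[T,P],\prec)$, whose counit is indeed the endpoint map $\sigma_{T,P}$, with the currying isomorphism onto $\CatCuBimor(S\times T,P)$. Your verifications of the currying step and the resulting formula $\tilde{\alpha}(a,b)=\sigma_{T,P}(\alpha(a))(b)$ are sound, so the proposal is correct.
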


\section{Concretization of categorical constructions for \texorpdfstring{$\CatCu$}{Cu}}
\label{sec:genProp}

In this section, we give concrete pictures of general constructions in closed, symmetric, monoidal categories for the category $\CatCu$. This will be used in the next section, and we start below with the analysis of the unit and counit maps.

\begin{dfn}
\label{dfn:genProp:unitMap}
Given \CuSgp{s} $S$ and $T$, the \emph{unit map} is the \CuMor{} $\unit_{S,T}\colon S\to\ihom{T,S\otimes T}$ that under the identification
\[
\CatCuMor\big( S, \ihom{T,S\otimes T} \big)
\cong \CatCuMor\big( S\otimes T, S\otimes T \big)
\]
corresponds to the identity map on $S\otimes T$.
\end{dfn}

In the result below we shall use that, if $S$ is a $\CatCu$-semigroup, and $a\in S$, then there is $(a_\lambda)_{\lambda\in I_\QQ}$ such that $a_{\lambda'}\ll a_\lambda$ whenever $\lambda'<\lambda$, $a_\lambda=\sup_{\lambda'<\lambda} a_{\lambda'}$, and $\sup_\lambda a_\lambda=s$;
see \cite[Proposition~2.8]{AntPerThi17arX:AbsBivariantCu}.

\begin{prp}
\label{prp:genProp:unitMap}
Let $S$ and $T$ be \CuSgp{s}, and let $a\in S$.
Let $(a_\lambda)_{\lambda\in I_\QQ}$ be a path in $(S,\ll)$ with endpoint $as$.
Then for each $\lambda\in I_\QQ$, the map $a_\lambda\otimes\freeVar\colon T\to S\otimes T$, sending $b\in T$ to $a_\lambda\otimes b$, is a generalized \CuMor{}.
Moreover, $(a_\lambda\otimes\freeVar)_{\lambda\in I_\QQ}$ is a path in $(\CatCuMor[T, S\otimes T],\prec)$, and we have $\unit_{S,T}(a) = [(a_\lambda\otimes\freeVar)_\lambda]$.
\end{prp}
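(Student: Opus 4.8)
The strategy is to work directly from the definition of the unit map as the $\CatCu$-morphism corresponding to $\id_{S\otimes T}$ under the isomorphism in \autoref{thm:Cuclosedbijection}, and to unwind what this means in terms of the $\tau$-construction and the endpoint map $\sigma$. First I would verify the three preliminary assertions. That each $a_\lambda\otimes\freeVar\colon T\to S\otimes T$ is a generalized $\CatCu$-morphism is immediate: it is the map obtained by fixing the first variable in the $\CatCu$-bimorphism $\omega\colon S\times T\to S\otimes T$, so it is an additive, order-preserving, supremum-preserving map (axioms on $\omega$ from \autoref{dfn:prelim:CatCuBimor}(1) and (2)); it need not preserve $\ll$, hence only a generalized morphism. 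Next, that $(a_\lambda\otimes\freeVar)_{\lambda}$ is a path in $(\CatCuMor[T,S\otimes T],\prec)$ amounts to checking that $\lambda'<\lambda$ implies $a_{\lambda'}\otimes\freeVar \prec a_\lambda\otimes\freeVar$; by definition of $\prec$ on the internal-hom semigroup this requires $a_{\lambda'}\otimes b' \ll a_\lambda\otimes b$ whenever $b'\ll b$ in $T$. Since $a_{\lambda'}\ll a_\lambda$ (as $(a_\lambda)_\lambda$ is a path in $(S,\ll)$), this is exactly \autoref{dfn:prelim:CatCuBimor}(3) applied to $\omega$. So the path is well-defined and $[(a_\lambda\otimes\freeVar)_\lambda]$ is a legitimate element of $\ihom{T,S\otimes T}$.

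The heart of the matter is the identity $\unit_{S,T}(a)=[(a_\lambda\otimes\freeVar)_\lambda]$. I would prove this by showing both sides have the same image under a suitable faithful-enough map, or more directly by checking that the $\CatCu$-morphism $a\mapsto [(a_\lambda\otimes\freeVar)_\lambda]$ corresponds to $\id_{S\otimes T}$ under \autoref{thm:Cuclosedbijection}. Call this morphism $\delta\colon S\to\ihom{T,S\otimes T}$ (one should check it is indeed a $\CatCu$-morphism — additivity and order preservation are clear, preservation of $\ll$ and of suprema of increasing sequences follow from the path picture of $\ihom{-,-}$ and axioms \axiomO{1}, \axiomO{3}, \axiomO{4} in $S$, together with the fact that a choice of paths can be made compatibly; this is the kind of verification already performed in \cite{AntPerThi17arX:AbsBivariantCu}). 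Under the first isomorphism of \autoref{thm:Cuclosedbijection}, $\delta$ corresponds to the bimorphism $\tilde\delta(b',b'')=\sigma_{T,S\otimes T}(\delta(b'))(b'')$. Now
\[
\sigma_{T,S\otimes T}(\delta(b'))(b'')
= \sigma_{T,S\otimes T}\big([(b'_\lambda\otimes\freeVar)_\lambda]\big)(b'')
= \sup_{\lambda\in I_\QQ} (b'_\lambda\otimes b'')
= \Big(\sup_{\lambda} b'_\lambda\Big)\otimes b''
= b'\otimes b'',
\]
using the definition of the endpoint map (\autoref{dfn:endpoint}), then the joint continuity of $\omega$ (\autoref{dfn:prelim:CatCuBimor}(2), with the second sequence constant), and finally $\sup_\lambda b'_\lambda = b'$. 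Thus $\tilde\delta=\omega$, which is precisely the bimorphism corresponding to $\id_{S\otimes T}$ under the second isomorphism of \autoref{thm:Cuclosedbijection}. By uniqueness in that theorem, $\delta=\unit_{S,T}$, as desired.

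**Main obstacle.** The routine parts are genuinely routine once one has the path picture of $\ihom{-,-}$ in hand. The one point deserving care is confirming that $\delta$ is well-defined \emph{independently of the choice of path} $(a_\lambda)_\lambda$ with endpoint $a$, and that it is a $\CatCu$-morphism (not merely a generalized one) — i.e., that it preserves $\ll$ and suprema of increasing sequences. Independence of the path follows because any two such paths are equivalent under $\sim$ (they have the same endpoint, and $a_{\lambda'}\ll a_\lambda$ forces the required interleaving via \autoref{dfn:prelim:CatCuBimor}(3)), so their images under $\omega$ in each variable give $\sim$-equivalent paths in $\CatCuMor[T,S\otimes T]$. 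The $\CatCu$-morphism properties of $\delta$ are then most cleanly obtained not by hand but by the identification just established: since $\delta=\unit_{S,T}$ and the unit map is a $\CatCu$-morphism by construction (\autoref{dfn:genProp:unitMap}), nothing further is needed — though logically one should present the verification that $\delta$ is at least a well-defined set map into $\ihom{T,S\otimes T}$ before invoking \autoref{thm:Cuclosedbijection}. Handling this ordering of the argument carefully is the only real subtlety.
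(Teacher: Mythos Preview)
Your proof is correct and follows essentially the same route as the paper: verify that each $a_\lambda\otimes\freeVar$ is a generalized \CuMor{} and that $(a_\lambda\otimes\freeVar)_\lambda$ is a path using the bimorphism properties of $\omega$, define the candidate map (you call it $\delta$, the paper calls it $\alpha$), and then check that it corresponds to $\id_{S\otimes T}$ under the bijection of \autoref{thm:Cuclosedbijection} via the same endpoint computation $\sup_\lambda(a_\lambda\otimes b)=a\otimes b$.

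One small correction to your closing discussion: the bijection of \autoref{thm:Cuclosedbijection} is stated only for \CuMor{s}, so you must verify that $\delta$ is a \CuMor{} (not merely a well-defined set map) \emph{before} invoking it --- the paper does exactly this, declaring the check ``straightforward''. Your suggested shortcut of deducing the \CuMor{} properties from the identification $\delta=\unit_{S,T}$ is circular, since without that verification you cannot appeal to the theorem to obtain the identification in the first place (and the endpoint map $\sigma$ is not injective in general, so agreement after applying $\sigma$ does not by itself force equality in $\ihom{T,S\otimes T}$).
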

\begin{proof}
The map $\omega\colon S\times T\to S\otimes T$, given by $\omega(s,t)=s\otimes t$, is a $\CatCu$-bimorphism.
This implies that $s\otimes\freeVar\colon T\to S\otimes T$ is a generalized \CuMor{} for each $s\in S$.
Moreover, using that $\omega$ preserves the joint way-below relation, we obtain that $s'\otimes\freeVar\prec s\otimes\freeVar$ for $s',s\in S$ satisfying $s'\ll s$.
In particular, if $(s_\lambda)_{\lambda\in I_\QQ}$ is a path in $S$, then $(s_\lambda\otimes\freeVar)_{\lambda\in I_\QQ}$ is a path in $(\CatCuMor[T, S\otimes T],\prec)$.
We define $\alpha\colon S\to\ihom{T,S\otimes T}$ by sending $s\in S$ to $[(s_\lambda\otimes\freeVar)_\lambda]$ for some choice of path $(s_\lambda)_\lambda$ in $S$ with endpoint $as$.
It is straightforward to check that $\alpha$ is a well-defined \CuMor{}.

Let us show that $\alpha=\unit_{S,T}$.
Consider the bijections
\[
\CatCuMor\big( S, \ihom{T,S\otimes T} \big)
\cong \CatCuBimor\big( S\times T, S\otimes T \big)
\cong \CatCuMor\big( S\otimes T, S\otimes T \big)
\]
from \autoref{thm:Cuclosedbijection}.
Under the first bijection, $\alpha$ corresponds to the $\CatCu$-bimorphism $\bar{\alpha}$ given by
\[
\bar{\alpha}(s,t)=\sigma_{T,S\otimes T}(\alpha(s))(t),
\]
for $s\in S$ and $t\in T$, where $\sigma_{T,S\otimes T}$ is the endpoint map.
We compute
\[
\bar{\alpha}(s,t)=\sigma_{T,S\otimes T}(\alpha(s))(t)
= \sup_{\lambda\in I_\QQ} (s_\lambda\otimes\freeVar)(t)
= \sup_{\lambda\in I_\QQ} (s_\lambda\otimes t)
= s\otimes t,
\]
for every path $(s_\lambda)_\lambda$ with endpoint $s\in S$, and every $t\in T$.
It follows that $\bar{\alpha}$ corresponds to $\id_{S\otimes T}$ under the second bijection.
By definition of $\unit_{S,T}$, this shows that $\alpha=\unit_{S,T}$, as desired.
\end{proof}

\begin{ntn}
\label{ntn:ihomisomorphism}
Given \CuSgp{s} $S$, $T$ and $P$, and a $\CatCu$-bimorphism $\alpha\colon S\times T\to P$, we shall often use the notation $\bar{\alpha}\colon S\to\ihom{T,P}$ to refer to the \CuMor{} that corresponds to $\alpha$ under the identification in \autoref{thm:Cuclosedbijection}.
\end{ntn}

\begin{cor}
\label{prp:genProp:unitMapEndpoint}
Let $S$ and $T$ be \CuSgp{s}.
Then the composition
\[
\sigma_{T,S\otimes T}\circ\unit_{S,T} \colon S \xrightarrow{\unit_{S,T}} \ihom{T,S\otimes T} \xrightarrow{\sigma_{T,S\otimes T}} \CatCuMor[T,S\otimes T].
\]
satisfies $(\sigma_{T,S\otimes T}\circ\unit_{S,T})(a)=a\otimes\freeVar$, for every $a\in S$.
In particular
\[
(\sigma_{T,S\otimes T}\circ\unit_{S,T})(a)(b) = a\otimes b,
\]
for $a\in S$ and $b\in T$.
\end{cor}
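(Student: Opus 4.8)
The plan is to read the statement off directly from \autoref{prp:genProp:unitMap} together with the definition of the endpoint map, so that the corollary is essentially a repackaging of the computation already performed in the proof of that proposition. First I would fix $a\in S$ and choose, via \cite[Proposition~2.8]{AntPerThi17arX:AbsBivariantCu}, a path $(a_\lambda)_{\lambda\in I_\QQ}$ in $(S,\ll)$ with endpoint $a$; in particular $\sup_{\lambda\in I_\QQ} a_\lambda = a$. Then \autoref{prp:genProp:unitMap} gives $\unit_{S,T}(a) = [(a_\lambda\otimes\freeVar)_\lambda]$ in $\ihom{T,S\otimes T}$, where each $a_\lambda\otimes\freeVar\colon T\to S\otimes T$ is a generalized \CuMor{}.

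Next I would apply the definition of the endpoint map (\autoref{dfn:endpoint}): for a path $\pathCu{f}=(f_\lambda)_\lambda$ in $\CatCuMor[T,S\otimes T]$ and $b\in T$ we have $\sigma_{T,S\otimes T}([\pathCu{f}])(b)=\sup_{\lambda\in I_\QQ} f_\lambda(b)$. Applying this with $\pathCu{f}=(a_\lambda\otimes\freeVar)_\lambda$ yields
\[
(\sigma_{T,S\otimes T}\circ\unit_{S,T})(a)(b)=\sup_{\lambda\in I_\QQ}(a_\lambda\otimes\freeVar)(b)=\sup_{\lambda\in I_\QQ}(a_\lambda\otimes b)
\]
for every $b\in T$.

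It then remains to identify this supremum with $a\otimes b$. For this I would use that the canonical map $\omega\colon S\times T\to S\otimes T$ is a $\CatCu$-bimorphism, hence preserves suprema of increasing sequences in the first variable. Choosing an increasing sequence $(\lambda_n)_n$ in $I_\QQ$ converging to $1$, the sequence $(a_{\lambda_n})_n$ is increasing with $\sup_n a_{\lambda_n}=a$, and cofinality of $(\lambda_n)_n$ in $I_\QQ$ gives $\sup_{\lambda\in I_\QQ}(a_\lambda\otimes b)=\sup_n(a_{\lambda_n}\otimes b)=\big(\sup_n a_{\lambda_n}\big)\otimes b=a\otimes b$. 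Hence $(\sigma_{T,S\otimes T}\circ\unit_{S,T})(a)(b)=a\otimes b$ for all $b\in T$, i.e. $(\sigma_{T,S\otimes T}\circ\unit_{S,T})(a)=a\otimes\freeVar$ as a generalized \CuMor{} $T\to S\otimes T$, and the ``in particular'' clause is then immediate.

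There is essentially no serious obstacle here: the computation coincides with the one carried out in the proof of \autoref{prp:genProp:unitMap}, and the only point requiring (minor) care is that the supremum over $I_\QQ$ occurring in the endpoint map coincides with the value $a\otimes b$ — which is precisely where the supremum-preservation property of the bimorphism $\omega$ is used. One should also note in passing that $b\mapsto a\otimes b$ is indeed a generalized \CuMor{}, as already recorded in \autoref{prp:genProp:unitMap}.
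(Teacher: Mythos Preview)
Your proof is correct and follows essentially the same approach as the paper: choose a path $(a_\lambda)_\lambda$ with endpoint $a$, invoke \autoref{prp:genProp:unitMap} to get $\unit_{S,T}(a)=[(a_\lambda\otimes\freeVar)_\lambda]$, and then apply the endpoint map. The paper's version is terser, simply asserting that the supremum of the maps $a_\lambda\otimes\freeVar$ is $a\otimes\freeVar$, whereas you spell out why via the $\CatCu$-bimorphism property of $\omega$ and a cofinality argument; this added detail is harmless and arguably clearer.
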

\begin{proof}
Let $a\in S$ and $b\in T$.
Choose a path $(a_\lambda)_\lambda$ in $S$ with endpoint $a$.
Then $\unit_{S,T}(a)=[(a_\lambda\otimes\freeVar)_\lambda]$ by \autoref{prp:genProp:unitMap}.
The supremum of the maps $a_\lambda\otimes\freeVar$ in $\CatCuMor[S,T\otimes S]$ is the map $a\otimes\freeVar$.
Thus, $(\sigma_{T,S\otimes T}\circ\unit_{S,T})(a)=a\otimes\freeVar$, as desired.
\end{proof}

\begin{dfn}
\label{dfn:genProp:counitMap}
Given \CuSgp{s} $S$ and $T$, the \emph{counit map} (also called \emph{evaluation map}) is the \CuMor{} $\counit_{S,T}\colon\ihom{S,T}\otimes S\to T$ that under the identification
\[
\CatCuMor\big( \ihom{S,T}\otimes S,T \big)
\cong \CatCuMor\big( \ihom{S,T},\ihom{S,T} \big)
\]
corresponds to the identity map on $\ihom{S,T}$.
Given $x\in\ihom{S,T}$ and $a\in S$, we also use $x(a)$ to denote $\counit_{S,T}(x\otimes a)$, but note that $x(a)=x'(a)$ for all $a\in S$ does not imply $x=x'$.
\end{dfn}

\begin{prp}
\label{prp:genProp:counitMap}
Let $S$ and $T$ be \CuSgp{s}, let $x\in\ihom{S,T}$, and let $a\in S$.
Then $\counit_{S,T}(x\otimes a)=\sigma_{S,T}(x)(a)$.
Thus, if $\pathCu{f}=(f_\lambda)_\lambda$ is a path in $\CatCuMor[S,T]$, then
\[
[\pathCu{f}](s)
= \counit_{S,T}([\pathCu{f}]\otimes a)
= \sup_{\lambda<1}f_\lambda(a).
\]
\end{prp}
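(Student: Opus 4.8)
The plan is to trace the identity morphism of $\ihom{S,T}$ through the chain of natural isomorphisms of \autoref{thm:Cuclosedbijection}, applied with the semigroups $S$, $T$, $P$ there replaced by $\ihom{S,T}$, $S$, $T$. Concretely, this substitution yields natural isomorphisms of positively ordered monoids
\[
\CatCuMor\big( \ihom{S,T}, \ihom{S,T} \big)
\cong \CatCuBimor\big( \ihom{S,T}\times S, T \big)
\cong \CatCuMor\big( \ihom{S,T}\otimes S, T \big),
\]
where, by \autoref{thm:Cuclosedbijection}, the first isomorphism sends a \CuMor{} $\alpha$ to the $\CatCu$-bimorphism $\tilde{\alpha}$ with $\tilde{\alpha}(x,a)=\sigma_{S,T}(\alpha(x))(a)$, and the second sends $\beta$ to the bimorphism $(x,a)\mapsto\beta(x\otimes a)$. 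The identification used in \autoref{dfn:genProp:counitMap} is precisely the composite of these two (read from right to left), and $\counit_{S,T}$ is by definition the morphism on the right-hand side corresponding to $\id_{\ihom{S,T}}$ on the left.

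Tracing through, the bimorphism $(x,a)\mapsto\counit_{S,T}(x\otimes a)$ associated to $\counit_{S,T}$ must coincide with the bimorphism associated to $\id_{\ihom{S,T}}$, which is $(x,a)\mapsto\sigma_{S,T}(\id_{\ihom{S,T}}(x))(a)=\sigma_{S,T}(x)(a)$. Evaluating both sides at an arbitrary pair $(x,a)$ gives $\counit_{S,T}(x\otimes a)=\sigma_{S,T}(x)(a)$, which is the first assertion. For the ``Thus'' part I would specialize to $x=[\pathCu{f}]$ for a path $\pathCu{f}=(f_\lambda)_{\lambda\in I_\QQ}$ in $\CatCuMor[S,T]$ and invoke the explicit formula for the endpoint map from \autoref{dfn:endpoint}, namely $\sigma_{S,T}([\pathCu{f}])(a)=\sup_{\lambda\in I_\QQ}f_\lambda(a)$. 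Since $I_\QQ=\QQ\cap(0,1)$, this supremum is exactly $\sup_{\lambda<1}f_\lambda(a)$ in the notation of the statement, so the first assertion yields $[\pathCu{f}](a)=\counit_{S,T}([\pathCu{f}]\otimes a)=\sup_{\lambda<1}f_\lambda(a)$, where $[\pathCu{f}](a)$ denotes the shorthand introduced in \autoref{dfn:genProp:counitMap}.

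The argument is a pure definition chase, so there is no substantial obstacle. The only point requiring a word of care is the verification that the ``identification'' invoked in \autoref{dfn:genProp:counitMap} really is the composite of the two isomorphisms of \autoref{thm:Cuclosedbijection}; once this is acknowledged, the explicit formula for the first isomorphism does all the work, and the endpoint-map description of $\sigma_{S,T}$ takes care of the final displayed equality.
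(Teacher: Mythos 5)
Your proposal is correct and follows essentially the same route as the paper: both trace $\id_{\ihom{S,T}}$ through the isomorphisms of \autoref{thm:Cuclosedbijection} (with $S$, $T$, $P$ replaced by $\ihom{S,T}$, $S$, $T$) to identify $\counit_{S,T}(x\otimes a)$ with $\sigma_{S,T}(x)(a)$, and then read off the displayed formula from the definition of the endpoint map. No gaps.
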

\begin{proof}
Consider the bijections
\[
\CatCuMor\big( \ihom{S,T}, \ihom{S,T} \big)
\cong \CatCuBimor\big( \ihom{S,T}\times S, T \big)
\cong \CatCuMor\big( \ihom{S,T}\otimes S, T \big)
\]
from \autoref{thm:Cuclosedbijection}.
To simplify notation, we denote the identity map on $\ihom{S,T}$ by $\id$.
Under the first bijection, $\id$ corresponds to the $\CatCu$-bimorphism $\bar{\id}$ satisfying
\[
\bar{\id}(y,s)=\sigma_{S,T}(\id(y))(s),
\]
for all $y\in\ihom{S,T}$ and $s\in S$.
We obtain that
\[
\counit_{S,T}(x\otimes a)
= \bar{\id}(x,a)
=\sigma_{S,T}(\id(x))(a)
=\sigma_{S,T}(x)(a). \qedhere
\]
\end{proof}

\begin{rmk}
\label{rmk:genProp:counitMap}
Let $\varphi\colon S\to T$ be a \CuMor, and let $a\in S$.
Considering $\varphi$ as an element of $\ihom{S,T}$, the notation $\varphi(a)$ for $\counit_{S,T}(\varphi\otimes a)$ is consistent with the usual notation of $\varphi(a)$ for the evaluation of $\varphi$ at $a$. 
\end{rmk}

\begin{lma}
\label{prp:genProp:eval1}
Let $S$ be a \CuSgp.
Let $\ev_1\colon\CatCuMor[\NNbar,S]\to S$ be given by $\ev_1(f)=f(1)$ for $f\in\CatCuMor[\NNbar,S]$.
Then $\ev_1$ is an isomorphism of $\CatQ$-semigroups.
That is, $\ev_1$ is an additive order-isomorphism and we have $f\prec g$ if and only if $\ev_1(f)\ll\ev_1(g)$, for $f,g\in\CatCuMor[\NNbar,S]$.

It follows that $(\CatCuMor[\NNbar,S],\prec)$ is a \CuSgp{} (naturally isomorphic to $S$ via $\ev_1$).
Moreover, the endpoint map $\sigma_{\NNbar,S}\colon\ihom{\NNbar,S}\to\CatCuMor[\NNbar,S]$ from \autoref{dfn:endpoint} is an isomorphism.
\end{lma}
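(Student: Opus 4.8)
The plan is to prove directly that $\ev_1$ is a bijection with an explicit inverse, to check that it respects addition, order and the auxiliary relation, and then to observe that the assertion that $\CatCuMor[\NNbar,S]$ is a \CuSgp{} follows by transport of structure, while the assertion about $\sigma_{\NNbar,S}$ reduces to a short interpolation argument.

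First I would exhibit the inverse of $\ev_1$. For $a\in S$ define $f_a\colon\NNbar\to S$ by $f_a(n)=na$ for $n\in\NN$ and $f_a(\infty)=\sup_n na$, the supremum existing by \axiomO{1}. A routine check (the cases involving $\infty$ using \axiomO{4} and the fact that $\{na:n\in\NN\}$ is cofinal in $\{2na:n\in\NN\}$ and in $\{(n+k)a:n\in\NN\}$) shows that $f_a$ is additive, preserves $0$ and order, and preserves suprema of increasing sequences (an increasing sequence in $\NNbar$ is either eventually constant or cofinal in $\NN$). Hence $f_a\in\CatCuMor[\NNbar,S]$. Conversely, every $f\in\CatCuMor[\NNbar,S]$ satisfies $f(n)=nf(1)$ and $f(\infty)=\sup_n f(n)$, so $f=f_{f(1)}$; thus $a\mapsto f_a$ is a two-sided inverse of $\ev_1$. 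Since $(f+g)(1)=f(1)+g(1)$, the map $\ev_1$ is additive, and since $f\le g$ pointwise is equivalent to $f(1)\le g(1)$ (again using $f(n)=nf(1)$, $f(\infty)=\sup_n f(n)$), it is an order-isomorphism.

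Next I would identify the auxiliary relation $\prec$. In $\NNbar$ the elements of $\NN$ are compact while $\infty$ is not, so $m'\ll m$ holds exactly when $m'\in\NN$ and $m'\le m$. Hence, for $f,g\in\CatCuMor[\NNbar,S]$: if $f\prec g$ then, specialising to $1\ll 1$, we get $f(1)\ll g(1)$; conversely, if $f(1)\ll g(1)$ and $m'\ll m$, then $m'\in\NN$, and if $m'\ge 1$ an $(m'-1)$-fold application of \axiomO{3} gives $f(m')=m'f(1)\ll m'g(1)\le g(m)$, so $f(m')\ll g(m)$ (the case $m'=0$ being trivial); thus $f\prec g$. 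Therefore $f\prec g$ if and only if $\ev_1(f)\ll\ev_1(g)$, so $\ev_1$ is an isomorphism of $\CatQ$-semigroups. In particular the order-isomorphism $\ev_1$ carries $\prec$ onto the way-below relation of $S$; as suprema of increasing sequences and the way-below relation are preserved by order-isomorphisms, $(\CatCuMor[\NNbar,S],\prec)$ satisfies \axiomO{1}--\axiomO{4}, with $\prec$ equal to its own way-below relation, and hence is a \CuSgp{} isomorphic to $S$ via $\ev_1$.

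For the endpoint map, the target is now a \CuSgp{} in which $\prec$ is the way-below relation, so $\sigma_{\NNbar,S}([\pathCu f])$ is the pointwise supremum $\sup_\lambda f_\lambda$, which is also the supremum of $(f_\lambda)_\lambda$ in $\CatCuMor[\NNbar,S]$; this map is additive by \axiomO{4} and preserves $0$. For surjectivity, given $\varphi\in\CatCuMor[\NNbar,S]$ choose a path $(\varphi_\lambda)_{\lambda\in I_\QQ}$ in $(\CatCuMor[\NNbar,S],\prec)$ with $\sup_\lambda\varphi_\lambda=\varphi$ (such paths exist by \axiomO{2}, cf.\ the discussion preceding \autoref{prp:genProp:unitMap}); then $\sigma_{\NNbar,S}([\pathCu\varphi])=\varphi$. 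For injectivity and the order-embedding property, suppose $\sup_\lambda f_\lambda\le\sup_\mu g_\mu$; given $\lambda$, pick $\lambda<\lambda'<\lambda''<1$, so that, since $\prec$ is the way-below relation, $f_{\lambda'}\ll f_{\lambda''}\le\sup_\nu f_\nu\le\sup_\mu g_\mu$, and writing the last supremum as that of an increasing sequence $(g_{\mu_k})_k$ along a cofinal $\mu_1<\mu_2<\cdots$ we get $f_{\lambda'}\le g_{\mu_k}$ for some $k$, hence $f_\lambda\prec f_{\lambda'}\le g_{\mu_k}$, so $f_\lambda\prec g_{\mu_k}$; thus $\pathCu f\lesssim\pathCu g$, i.e.\ $[\pathCu f]\le[\pathCu g]$. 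The converse is immediate since $[\pathCu f]\le[\pathCu g]$ gives $f_\lambda\le\sup_\mu g_\mu$ for all $\lambda$. So $\sigma_{\NNbar,S}$ is a bijective additive order-embedding between \CuSgp{s}, hence an isomorphism of \CuSgp{s}.

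The only mildly delicate points are the $\infty$-cases in verifying that $f_a$ is a generalized \CuMor{}, and the interpolation step in the injectivity of $\sigma_{\NNbar,S}$; everything else is bookkeeping, and I do not expect a genuine obstacle. Alternatively, one could deduce the isomorphism $\ihom{\NNbar,S}\cong S$ abstractly from the fact that $\NNbar$ is the monoidal unit, so that $\CatCuMor(\freeVar,\ihom{\NNbar,S})\cong\CatCuMor(\freeVar\otimes\NNbar,S)\cong\CatCuMor(\freeVar,S)$ naturally and Yoneda applies; but the concrete computation above has the advantage of also pinning down $\ev_1$ and $\sigma_{\NNbar,S}$ explicitly.
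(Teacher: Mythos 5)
Your proof is correct, and it takes a more self-contained route than the paper's. The paper dismisses the first assertion as "straightforward" (your explicit inverse $a\mapsto f_a$ and the identification of $\prec$ with $\ll$ via $1\ll 1$ is exactly the verification intended) and then obtains the statement about $\sigma_{\NNbar,S}$ purely by citation: once $\ev_1$ is a $\CatQ$-isomorphism, $(\CatCuMor[\NNbar,S],\prec)$ is a \CuSgp{} with $\prec$ equal to its way-below relation, and \cite[Proposition~4.10]{AntPerThi17arX:AbsBivariantCu} says that the endpoint map $\tau(T)\to T$ of \emph{any} \CuSgp{} $T$ is an isomorphism; applying this to $T=\CatCuMor[\NNbar,S]$ finishes the proof. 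Your final paragraph — surjectivity via \axiomO{2} and the order-embedding property via the three-index interpolation $\lambda<\lambda'<\lambda''$ — is in effect a direct reproof of that cited proposition in this special case, and it is carried out correctly (in particular, you rightly interpolate twice so as to land on $f_\lambda\prec g_{\mu_k}$ rather than merely $f_\lambda\le g_{\mu_k}$). What the paper's route buys is brevity and reuse of an established general fact; what yours buys is independence from that reference and a concrete description of both $\ev_1^{-1}$ and $\sigma_{\NNbar,S}$. Your closing remark that the isomorphism $\ihom{\NNbar,S}\cong S$ also follows abstractly from $\NNbar$ being the monoidal unit is essentially the content of \autoref{dfn:genProp:iIso} and \autoref{prp:genProp:iIso}, so that observation, too, is consistent with the paper.
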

\begin{proof}
It is straightforward to prove that $\ev_1$ is an isomorphism of $\CatQ$-semigroups.
By \cite[Proposition 4.10]{AntPerThi17arX:AbsBivariantCu}, the endpoint map of a \CuSgp{} is an isomorphism.
Thus, the endpoint maps $\varphi_S$ and $\varphi_{\CatCuMor[\NNbar,S]}$ are isomorphisms.
By definition, $\sigma_{\NNbar,S} = \varphi_{\CatCuMor[\NNbar,S]}$.
Since $\ev_1$ is an isomorphism, so is $\tau(\ev_1)$.
\end{proof}

\begin{dfn}
\label{dfn:genProp:iIso}
Given a \CuSgp{} $S$, we let $\iIso_S\colon S\to\ihom{\NNbar,S}$ be the \CuMor{} that under the identification
\[
\CatCuMor\big( S, \ihom{\NNbar,S} \big)
\cong \CatCuMor\big( S\otimes\NNbar, S \big)
\]
corresponds to the natural isomorphism $\rtIso_S\colon S\otimes\NNbar\to S$.
\end{dfn}

We leave the proof of the following result to the reader.

\begin{prp}
\label{prp:genProp:iIso}
Let $S$ be a \CuSgp.
Then $\iIso_S\colon S\to\ihom{\NNbar,S}$ is an isomorphism.
The inverse of $\iIso_S$ is $\ev_1\circ\sigma_{\NNbar,S}$, where $\ev_1$ is evaluation at $1$ as in \autoref{prp:genProp:eval1}, and where $\sigma_{\NNbar,S}\colon\ihom{\NNbar,S}\to\CatCuMor[\NNbar,S]$ denotes the endpoint map from \autoref{dfn:endpoint}.
\end{prp}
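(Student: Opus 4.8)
The statement to prove is \autoref{prp:genProp:iIso}: that $\iIso_S\colon S\to\ihom{\NNbar,S}$ is an isomorphism with inverse $\ev_1\circ\sigma_{\NNbar,S}$. The natural strategy is to exploit that $\ihom{\NNbar,S}$ has already been pinned down concretely: by \autoref{prp:genProp:eval1}, the endpoint map $\sigma_{\NNbar,S}\colon\ihom{\NNbar,S}\to\CatCuMor[\NNbar,S]$ is an isomorphism of \CuSgp{s}, and $\ev_1\colon\CatCuMor[\NNbar,S]\to S$ is an isomorphism of $\CatQ$-semigroups (hence of \CuSgp{s}, since both sides are \CuSgp{s}). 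So the composite $\ev_1\circ\sigma_{\NNbar,S}\colon\ihom{\NNbar,S}\to S$ is already known to be an isomorphism. It therefore suffices to show that $\ev_1\circ\sigma_{\NNbar,S}$ is a two-sided inverse of $\iIso_S$; since $\ev_1\circ\sigma_{\NNbar,S}$ is invertible, it is even enough to check one composite, say $(\ev_1\circ\sigma_{\NNbar,S})\circ\iIso_S=\id_S$, and then $\iIso_S$ is automatically the (unique) inverse.

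**Key steps.**
First I would unwind the definition of $\iIso_S$ via \autoref{thm:Cuclosedbijection}. Under the isomorphism $\CatCuMor(S,\ihom{\NNbar,S})\cong\CatCuBimor(S\times\NNbar,S)$, the morphism $\iIso_S$ corresponds to the bimorphism $\widetilde{\iIso_S}(a,n)=\sigma_{\NNbar,S}(\iIso_S(a))(n)$; and by \autoref{dfn:genProp:iIso} this bimorphism is the one attached to $\rtIso_S\colon S\otimes\NNbar\to S$, namely $(a,n)\mapsto\rtIso_S(a\otimes n)$. Since $\rtIso_S$ is the canonical isomorphism with $\rtIso_S(a\otimes n)=na$ (i.e.\ $a$ added to itself $n$ times, interpreted in $\overline{\NN}$-scalar fashion), we get the identity
\[
\sigma_{\NNbar,S}(\iIso_S(a))(n)=na \quad\text{for all }a\in S,\ n\in\NNbar.
\]
In particular, setting $n=1$ gives $\bigl(\ev_1\circ\sigma_{\NNbar,S}\bigr)(\iIso_S(a))=\sigma_{\NNbar,S}(\iIso_S(a))(1)=a$, which is exactly $(\ev_1\circ\sigma_{\NNbar,S})\circ\iIso_S=\id_S$. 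Combined with the fact that $\ev_1\circ\sigma_{\NNbar,S}$ is an isomorphism (from \autoref{prp:genProp:eval1} and the remark that it is compatible with endpoints), this forces $\iIso_S$ to be its inverse, and in particular an isomorphism.

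**Main obstacle.**
The computations are all routine once the bookkeeping is set up, so the only real point requiring care is making sure the two different identifications are chased consistently: the identification defining $\iIso_S$ in \autoref{dfn:genProp:iIso} uses $\CatCuMor(S,\ihom{\NNbar,S})\cong\CatCuMor(S\otimes\NNbar,S)$ (the composite of the two isomorphisms in \autoref{thm:Cuclosedbijection}), and one must verify that under this composite the morphism sent to $\rtIso_S$ is characterized by $\sigma_{\NNbar,S}(\iIso_S(a))(n)=\rtIso_S(a\otimes n)$. This is immediate from the explicit formula $\widetilde\alpha(a,b)=\sigma_{T,P}(\alpha(a))(b)$ for the first bijection in \autoref{thm:Cuclosedbijection}, applied with $T=\NNbar$ and $P=S$. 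After that, evaluating at $n=1$ and invoking that $\ev_1\circ\sigma_{\NNbar,S}$ is already known to be an isomorphism finishes the argument with no further work; I would leave the verification that $\iIso_S$ respects the structure maps to the cited general theory, exactly as the paper's ``we leave the proof to the reader'' phrasing anticipates.
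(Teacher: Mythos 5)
Your argument is correct and is exactly the intended one: the paper leaves this proof to the reader, and your route---using \autoref{prp:genProp:eval1} to see that $\ev_1\circ\sigma_{\NNbar,S}$ is an isomorphism, unwinding \autoref{dfn:genProp:iIso} via the explicit formula in \autoref{thm:Cuclosedbijection} to get $\sigma_{\NNbar,S}(\iIso_S(a))(n)=\rtIso_S(a\otimes n)$, and evaluating at $n=1$ to conclude $(\ev_1\circ\sigma_{\NNbar,S})\circ\iIso_S=\id_S$, whence $\iIso_S$ is the two-sided inverse of an isomorphism---is the natural completion of the omitted proof. No gaps.
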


We now introduce and study the external tensor product map. To this end, let first $S_k$ and $T_k$ be \CuSgp{s}, and let $\varphi_k\colon S_k\to T_k$ be (generalized) \CuMor{s}, for $k=1,2$. Recall from the comments after \cite[Theorem 2.10]{AntPerThi17arX:AbsBivariantCu} that the map $\varphi_1\times \varphi_2\colon S_1\times S_2 \to T_1\otimes T_2$, defined by
\[
(\varphi_1\times \varphi_2)(a_1,a_2):=f_1(a_1)\otimes f_2(a_2),
\]
for $a_1\in S_1$ and $a_2\in S_2$, is a (generalized) $\CatCu$-bimorphism.
We denote the induced (generalized) \CuMor{} by $\varphi_1 \otimes \varphi_2 \colon S_1\otimes S_2 \to T_1\otimes T_2$, and we call it the \emph{tensor product} of $\varphi_1$ and $\varphi_2$.

Next, we generalize this construction and define an external tensor product between elements of internal-homs.

\begin{dfn}
\label{dfn:genProp:tensExt}
Given \CuSgp{s} $S_1,S_2,T_1$ and $T_2$, we define the \emph{external tensor product map}
$\tensExt\colon \ihom{S_1,T_1} \otimes \ihom{S_2,T_2} \to \ihom{S_1\otimes S_2, T_1\otimes T_2}$ as the \CuMor{} that under the identification
\begin{align*}
&\CatCuMor\big( \ihom{S_1,T_1} \otimes \ihom{S_2,T_2}, \ihom{S_1\otimes S_2, T_1\otimes T_2} \big) \\
&\quad\cong \CatCuMor\big( \ihom{S_1,T_1} \otimes \ihom{S_2,T_2} \otimes S_1\otimes S_2, T_1\otimes T_2 \big),
\end{align*}
corresponds to the composition
\[
(\counit_{S_1,T_1}\otimes\counit_{S_2,T_2})\circ(\id_{\ihom{S_1,T_1}}\otimes\sigma\otimes\id_{S_2}),
\]
where $\sigma\colon \ihom{S_2,T_2}\otimes S_1 \to S_1\otimes\ihom{S_2,T_2}$ denotes the flip isomorphism.

Given $x_1\in\ihom{S_1,T_1}$ and $x_2\in\ihom{S_2,T_2}$, we denote the image of $x_1\otimes x_2$ under this map by $x_1\tensExt x_2$, and we call it the \emph{external tensor product }of $x_1$ and $x_2$.
\end{dfn}

\begin{rmk}
\label{rmk:genProp:tensExt}
Let $\varphi_1\colon S_1\to T_1$ and $\varphi_2\colon S_2\to T_2$ be \CuMor{s}.
Using \cite[Proposition 5.11]{AntPerThi17arX:AbsBivariantCu}, we identify $\varphi_1$ with a compact element in $\ihom{S_1,T_1}$, and similarly for $\varphi_2$.
It is easy to see that the element $\varphi_1\tensExt \varphi_2$ from \autoref{dfn:genProp:tensExt} agrees with the compact element in $\ihom{S_1\otimes S_2,T_1\otimes T_2}$ that is identified with the tensor product map $\varphi_1\otimes \varphi_2\colon S_1\otimes S_2\to T_1\otimes T_2$ from the comments before the above definition.

Notice that there is a certain ambiguity with the notation $\varphi_1\otimes\varphi_2$, in that it may refer to a \CuMor{} (identified with a compact element in $\ihom{S_1\otimes S_2,T_1\otimes T_2}$), and also to an element in $\ihom{S_1,T_1}\otimes\ihom{S_2,T_2}$.
However, the precise meaning will be clear from the context.
\end{rmk}

\begin{thm}
\label{prp:genProp:tensExt}
Let $S_1,S_2,T_1$ and $T_2$ be \CuSgp{s}, and let $\pathCu{f}=(f_\lambda)_\lambda$ and $\pathCu{g}=(g_\lambda)_\lambda$ be paths in $\CatCuMor[S_1,T_1]$ and $\CatCuMor[S_2,T_2]$, respectively.
For each $\lambda$, consider the generalized \CuMor{} $f_\lambda\otimes g_\lambda\colon S_1\otimes S_2 \to T_1\otimes T_2$.
Then $(f_\lambda\otimes g_\lambda)_\lambda$ is a path in $\CatCuMor[S_1\otimes S_2, T_1\otimes T_2]$ and we have
\[
[\pathCu{f}]\tensExt[\pathCu{g}]
= [(f_\lambda\otimes g_\lambda)_\lambda].
\]
\end{thm}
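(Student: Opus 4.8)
The strategy is to check that $(f_\lambda\otimes g_\lambda)_\lambda$ is a path, and then to identify its class with $[\pathCu{f}]\tensExt[\pathCu{g}]$ by unwinding \autoref{dfn:genProp:tensExt} through the endpoint map and the explicit formula for $\counit$ from \autoref{prp:genProp:counitMap}. First I would verify the path condition: for $\lambda'<\lambda$ we have $f_{\lambda'}\prec f_\lambda$ and $g_{\lambda'}\prec g_\lambda$, so if $a'\ll a$ in $S_1$ and $b'\ll b$ in $S_2$ then $f_{\lambda'}(a')\ll f_\lambda(a)$ and $g_{\lambda'}(b')\ll g_\lambda(b)$; since $\otimes$ respects the joint way-below relation (axiom \axiomO{3} for the bimorphism $\omega$), $f_{\lambda'}(a')\otimes g_{\lambda'}(b')\ll f_\lambda(a)\otimes g_\lambda(b)$. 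Because elements of the form $a'\otimes b'$ with $a'\ll a$, $b'\ll b$ generate a way-below-cofinal family in $S_1\otimes S_2$, and using \axiomO{3} together with additivity, one deduces $(f_{\lambda'}\otimes g_{\lambda'})\prec(f_\lambda\otimes g_\lambda)$ as elements of $(\CatCuMor[S_1\otimes S_2,T_1\otimes T_2],\prec)$. This requires a small lemma — or a citation to the analogous fact in \cite{AntPerThi17arX:AbsBivariantCu} — that $\varphi\prec\psi$ for generalized morphisms on a tensor product can be tested on simple tensors $a'\otimes b'$ with $a'\ll a$, $b'\ll b$.

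Next I would compute both sides via their evaluation on simple tensors, using the fact (\autoref{thm:Cuclosedbijection}, part (2) of \autoref{prp:prelim:tensCu}) that a $\CatCu$-morphism out of a tensor product is determined by its composite with $\omega$, hence an element of $\ihom{S_1\otimes S_2,T_1\otimes T_2}$ is determined up to the usual equivalence by what its endpoint map does on simple tensors. For the left-hand side, tracing through \autoref{dfn:genProp:tensExt}, the morphism $\tensExt$ is, under adjunction, the composite $(\counit_{S_1,T_1}\otimes\counit_{S_2,T_2})\circ(\id\otimes\sigma\otimes\id)$; evaluating at $[\pathCu{f}]\otimes[\pathCu{g}]\otimes a\otimes b$ and using \autoref{prp:genProp:counitMap} (which gives $\counit_{S_i,T_i}(x\otimes c)=\sigma_{S_i,T_i}(x)(c)=\sup_\mu f_\mu(c)$ etc.) yields
\[
\sigma_{S_1\otimes S_2,\,T_1\otimes T_2}\big([\pathCu{f}]\tensExt[\pathCu{g}]\big)(a\otimes b)
= \Big(\sup_\mu f_\mu(a)\Big)\otimes\Big(\sup_\nu g_\nu(b)\Big).
\]
For the right-hand side, \autoref{prp:genProp:counitMap} again gives
\[
\sigma_{S_1\otimes S_2,\,T_1\otimes T_2}\big([(f_\lambda\otimes g_\lambda)_\lambda]\big)(a\otimes b)
= \sup_\lambda (f_\lambda\otimes g_\lambda)(a\otimes b)
= \sup_\lambda \big(f_\lambda(a)\otimes g_\lambda(b)\big).
\]
By \axiomO{4} and the separate continuity of $\otimes$ (property (2) of a $\CatCu$-bimorphism applied to $\omega$, taking joint suprema along the diagonal), $\sup_\lambda\big(f_\lambda(a)\otimes g_\lambda(b)\big)=\big(\sup_\mu f_\mu(a)\big)\otimes\big(\sup_\nu g_\nu(b)\big)$, so the two endpoint maps agree on all simple tensors.

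Finally, since both $[\pathCu{f}]\tensExt[\pathCu{g}]$ and $[(f_\lambda\otimes g_\lambda)_\lambda]$ lie in $\ihom{S_1\otimes S_2,T_1\otimes T_2}$, and since the endpoint map $\sigma$ is injective on a $\CatCu$-semigroup when combined with the data of the path (more precisely: two elements of $\ihom{S,T}$ are equal iff the corresponding $\CatCu$-morphisms under adjunction to $\CatCuMor(S\otimes(-),\,-)$ agree, which by \autoref{prp:prelim:tensCu}(2) is tested on simple tensors), the agreement of the two endpoint maps on all $a\otimes b$ forces equality. Concretely, I would phrase this last step as: passing to the adjoint $\CatCu$-morphisms $S_1\otimes S_2\to\ihom{\NNbar,T_1\otimes T_2}$ — or more directly, using that a class $[\pathCu{h}]\in\ihom{S,T}$ is determined by the values $\sigma_{S,T}([\pathCu{h}])(a)=\sup_\lambda h_\lambda(a)$ together with the way-below data already checked in the first paragraph — both classes have the same representative behaviour, hence are equal. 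The main obstacle is the bookkeeping in the first paragraph: verifying that $(f_\lambda\otimes g_\lambda)_\lambda$ is genuinely a path for the relation $\prec$ on $\CatCuMor[S_1\otimes S_2,T_1\otimes T_2]$ requires knowing that $\prec$ between generalized morphisms on a tensor product can be detected on the cofinal family of simple tensors $a'\otimes b'$; everything after that is a continuity computation using \axiomO{3}, \axiomO{4}, and the bimorphism axioms for $\omega$.
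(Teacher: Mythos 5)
Your verification that $(f_\lambda\otimes g_\lambda)_\lambda$ is a path is essentially the paper's argument: the paper makes the ``small lemma'' you defer to explicit by interpolating a given pair $t'\ll t$ in $S_1\otimes S_2$ between finite sums $\sum_k a_k'\otimes b_k'$ and $\sum_k a_k\otimes b_k$ with $a_k'\ll a_k$ and $b_k'\ll b_k$, and then applying \axiomO{3}; this is exactly the cofinality property of simple tensors you invoke, so that part is fine.

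The concluding step, however, has a genuine gap. You compute that the two elements $[\pathCu{f}]\tensExt[\pathCu{g}]$ and $[(f_\lambda\otimes g_\lambda)_\lambda]$ have the same image under the endpoint map $\sigma_{S_1\otimes S_2,T_1\otimes T_2}$ and then conclude that they are equal. But the endpoint map is not injective: as \autoref{dfn:genProp:counitMap} warns, $x(a)=x'(a)$ for all $a$ does not imply $x=x'$, and \autoref{exa:bivarCu:ihomSSNotSimple} exhibits $\ihom{S,S}$ containing a compact and a distinct soft element with the same endpoint. Your parenthetical attempt to repair this (``two elements of $\ihom{S,T}$ are equal iff the corresponding $\CatCu$-morphisms under adjunction agree'') is not a correct statement about individual elements --- the adjunction of \autoref{thm:Cuclosedbijection} identifies \emph{morphisms into} $\ihom{T,P}$ with morphisms out of a tensor product, and a single non-compact element of $\ihom{S,T}$ is not the image of $1$ under any $\CatCu$-morphism from $\NNbar$. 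The fix is the paper's route: first check that $([\pathCu{p}],[\pathCu{q}])\mapsto[(p_\lambda\otimes q_\lambda)_\lambda]$ is a well-defined $\CatCu$-bimorphism, let $\alpha$ be the induced $\CatCu$-morphism on $\ihom{S_1,T_1}\otimes\ihom{S_2,T_2}$, and then prove the equality of the two \emph{morphisms} $\tensExt$ and $\alpha$ by comparing their adjoints in $\CatCuMor\big(\ihom{S_1,T_1}\otimes\ihom{S_2,T_2}\otimes S_1\otimes S_2, T_1\otimes T_2\big)$ on simple tensors (your endpoint computation is precisely this comparison, and is correct); since the adjunction bijection is injective, $\tensExt=\alpha$, and evaluating at $[\pathCu{f}]\otimes[\pathCu{g}]$ gives the claim. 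So your computations are the right ones, but they must be packaged as an identity of globally defined morphisms rather than as an element-by-element comparison of endpoints.
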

\begin{proof}
To show that $(f_\lambda\otimes g_\lambda)_\lambda$ is a path, let $\lambda',\lambda\in I_\QQ$ satisfy $\lambda'<\lambda$.
To show that $f_{\lambda'}\otimes g_{\lambda'}\prec f_\lambda\otimes g_\lambda$, let $t',t\in S_1\otimes S_2$ satisfy $t'\ll t$.
By properties of the tensor product in $\CatCu$, we can choose $n\in\NN$, elements $a_k',a_k\in S_1$ and $b_k',b_k\in S_2$ satisfying $a_k'\ll a_k$ and $b_k'\ll b_k$ for $k=1,\ldots,n$, and such that
\[
t' \leq \sum_{k=1}^n a_k'\otimes b_k',\andSep
\sum_{k=1}^n a_k\otimes b_k\leq t.
\]
We have $f_{\lambda'}\prec f_\lambda$ and $g_{\lambda'}\prec g_\lambda$, and therefore $f_{\lambda'}(a_k')\ll f_\lambda(a_k)$ and $g_{\lambda'}(b_k')\ll g_\lambda(b_k)$ for $k=1,\ldots,n$.
Using this at the third step we deduce that
\begin{align*}
(f_{\lambda'}\otimes g_{\lambda'})(t')
&\leq (f_{\lambda'}\otimes g_{\lambda'})\left( \sum_{k=1}^n a_k'\otimes b_k' \right) \\
&= \sum_{k=1}^n f_{\lambda'}(a_k')\otimes g_{\lambda'}(b_k') \\
&\ll \sum_{k=1}^n f_{\lambda}(a_k)\otimes g_{\lambda}(b_k) \\
&= (f_{\lambda}\otimes g_{\lambda})\left( \sum_{k=1}^n a_k\otimes b_k \right)
\leq (f_{\lambda}\otimes g_{\lambda})(t).
\end{align*}

Thus, given paths $\pathCu{p}=(p_\lambda)_\lambda$ and $\pathCu{q}=(q_\lambda)_\lambda$ in $\CatCuMor[S_1,T_1]$ and $\CatCuMor[S_2,T_2]$, respectively, then $(p_\lambda\otimes q_\lambda)_\lambda$ is a path in $\CatCuMor[S_1\otimes S_2,T_1\otimes T_2]$.
Moreover, it is tedious but straightforward to check that the map $\ihom{S_1,T_1}\times\ihom{S_2,T_2}\to\ihom{S_1\otimes S_2,T_1\otimes T_2}$ that sends a pair $([\pathCu{p}]\otimes[\pathCu{q}])$ to $[(p_\lambda\tensExt q_\lambda)_\lambda]$ is a well-defined $\CatCu$-bimorphism.
We let $\alpha\colon \ihom{S_1,T_1}\otimes\ihom{S_2,T_2}\to\ihom{S_1\otimes S_2,T_1\otimes T_2}$ be the induced $\CatCu$-morphism.

To show that $[\pathCu{f}]\tensExt[\pathCu{g}]=[(f_\lambda\otimes g_\lambda)_\lambda]$, we will prove that the external tensor product $\tensExt$ and the map $\alpha$ correspond to the same $\CatCu$-morphism under the bijection
\begin{align*}
&\CatCuMor\big( \ihom{S_1,T_1}\otimes\ihom{S_2,T_2}, \ihom{S_1\otimes S_2,T_1\otimes T_2} \big) \\
&\quad\cong \CatCuMor\big( \ihom{S_1,T_1}\otimes\ihom{S_2,T_2}\otimes S_1\otimes S_2, T_1\otimes T_2 \big)
\end{align*}
from \autoref{thm:Cuclosedbijection}.

Let $\pathCu{p}=(p_\lambda)_\lambda$ and $\pathCu{q}=(q_\lambda)_\lambda$ be paths in $\CatCuMor[S_1,T_1]$ and $\CatCuMor[S_2,T_2]$, respectively, and let $s_i$ be elements in $S_i$, for $i=1,2$.
By definition of $\tensExt$ (see \autoref{dfn:genProp:tensExt} and \autoref{ntn:ihomisomorphism}), we have
\[
\bar{\tensExt}([\pathCu{p}] \otimes [\pathCu{q}] \otimes s_1 \otimes s_2)
= [\pathCu{p}](s_1)\otimes [\pathCu{q}](s_2)
= p_1(s_1)\otimes q_1(s_2).
\]
Using \autoref{thm:Cuclosedbijection} at the first step, we obtain that
\begin{align*}
\bar{\alpha}([\pathCu{p}] \otimes [\pathCu{q}]\otimes s_1 \otimes s_2)
&=\sigma_{S_1\otimes S_2,T_1\otimes T_2}(\alpha([\pathCu{p}]\otimes [\pathCu{q}]))(s_1\otimes s_2) \\
&=\sigma_{S_1\otimes S_2,T_1\otimes T_2}([(p_\lambda\otimes q_\lambda)_\lambda])(s_1\otimes s_2) \\
&=(p_1\otimes q_1)(s_1\otimes s_2)
= p_1(s_1)\otimes q_1(s_2).
\end{align*}
It follows that $\tensExt=\alpha$ and therefore
\[
[\pathCu{f}]\tensExt[\pathCu{g}]
= \tensExt([\pathCu{f}]\otimes[\pathCu{g}])
= \alpha([\pathCu{f}]\otimes[\pathCu{g}])
= [(f_\lambda\otimes g_\lambda)_\lambda]. \qedhere
\]
\end{proof}

The following result shows that the external tensor product is associative.

\begin{prp}
\label{prp:genProp:tensExtAssoc}
Let $S_1,S_2,T_1,T_2,P_1$ and $P_2$ be \CuSgp{s}, let $x\in\ihom{S_1,S_2}$, $y\in\ihom{T_1,T_2}$, and let $z\in\ihom{P_1,P_2}$.
For $k=1,2$, we identify $(S_k\otimes T_k)\otimes P_k$ with $S_k\otimes (T_k \otimes P_k)$ using the natural isomorphism from the monoidal structure of $\CatCu$ (see comments after \cite[Theorem 2.10]{AntPerThi17arX:AbsBivariantCu}).
Then
\[
(x\tensExt y)\tensExt z = x\tensExt(y\tensExt z).
\]
\end{prp}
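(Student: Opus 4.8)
The plan is to trivialize the problem by descending to the path-level description of the external tensor product provided by \autoref{prp:genProp:tensExt}, and to reduce the claimed identity to the associativity of the tensor product of generalized \CuMor{s}. Concretely, using \axiomO{2} I would first choose paths $\pathCu{f}=(f_\lambda)_\lambda$, $\pathCu{g}=(g_\lambda)_\lambda$ and $\pathCu{h}=(h_\lambda)_\lambda$ in $\CatCuMor[S_1,S_2]$, $\CatCuMor[T_1,T_2]$ and $\CatCuMor[P_1,P_2]$ respectively, with $x=[\pathCu{f}]$, $y=[\pathCu{g}]$ and $z=[\pathCu{h}]$.

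By \autoref{prp:genProp:tensExt}, $(f_\lambda\otimes g_\lambda)_\lambda$ is a path in $\CatCuMor[S_1\otimes T_1,S_2\otimes T_2]$ and $x\tensExt y=[(f_\lambda\otimes g_\lambda)_\lambda]$. Applying \autoref{prp:genProp:tensExt} a second time, now to this path and to $\pathCu{h}$, shows that $\big((f_\lambda\otimes g_\lambda)\otimes h_\lambda\big)_\lambda$ is a path in $\CatCuMor[(S_1\otimes T_1)\otimes P_1,(S_2\otimes T_2)\otimes P_2]$ and
\[
(x\tensExt y)\tensExt z=\big[\big((f_\lambda\otimes g_\lambda)\otimes h_\lambda\big)_\lambda\big].
\]
The symmetric computation gives $x\tensExt(y\tensExt z)=\big[\big(f_\lambda\otimes(g_\lambda\otimes h_\lambda)\big)_\lambda\big]$ as a class of a path in $\CatCuMor[S_1\otimes(T_1\otimes P_1),S_2\otimes(T_2\otimes P_2)]$. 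Hence, once $(S_i\otimes T_i)\otimes P_i$ is identified with $S_i\otimes(T_i\otimes P_i)$ via the associator isomorphisms $\theta_i$ of the monoidal structure of $\CatCu$ (as in the statement), it remains only to prove that for each $\lambda\in I_\QQ$ one has $(f_\lambda\otimes g_\lambda)\otimes h_\lambda=f_\lambda\otimes(g_\lambda\otimes h_\lambda)$ after transport along $\theta_1$ and $\theta_2$.

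This last point is exactly the associativity of the tensor product of generalized \CuMor{s}, and it holds for the same reason as for ordinary \CuMor{s}: both composites $\theta_2\circ\big((f_\lambda\otimes g_\lambda)\otimes h_\lambda\big)$ and $\big(f_\lambda\otimes(g_\lambda\otimes h_\lambda)\big)\circ\theta_1$ are generalized \CuMor{s} from $(S_1\otimes T_1)\otimes P_1$ to $S_2\otimes(T_2\otimes P_2)$ sending $(a\otimes b)\otimes p$ to $f_\lambda(a)\otimes\big(g_\lambda(b)\otimes h_\lambda(p)\big)$ for all $a\in S_1$, $b\in T_1$, $p\in P_1$, and a generalized \CuMor{} out of a tensor product is determined by its values on elementary tensors (by the universal property of the tensor product; cf.\ the discussion after \cite[Theorem~2.10]{AntPerThi17arX:AbsBivariantCu}). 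Substituting this equality into the two displayed formulas yields $(x\tensExt y)\tensExt z=x\tensExt(y\tensExt z)$. I expect everything here to be formal; the only step requiring genuine care is the bookkeeping of the associator identifications together with the verification that the two generalized \CuMor{s} above really do agree on elementary tensors — in other words, pinning down the associativity of $\otimes$ on generalized \CuMor{s} modulo $\theta_1,\theta_2$. (Iterating \autoref{prp:genProp:tensExt} is unproblematic, since its conclusion is again a path of generalized \CuMor{s} whose class is the relevant external tensor product.)
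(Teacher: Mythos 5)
Your proposal is correct and follows essentially the same route as the paper's own proof: reduce to the path-level formula of \autoref{prp:genProp:tensExt} and then invoke the associativity $(f_\lambda\otimes g_\lambda)\otimes h_\lambda=f_\lambda\otimes(g_\lambda\otimes h_\lambda)$ of the tensor product of generalized \CuMor{s}, checked on elementary tensors. You merely spell out the associator bookkeeping that the paper leaves implicit.
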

\begin{proof}
Given $f\in\CatCuMor[S_1,S_2]$, $g\in\CatCuMor[T_1,T_2]$ and $h\in\CatCuMor[P_1,P_2]$, it is straightforward to check that
\[
(f\otimes g)\otimes h = f\otimes(g\otimes h),
\]
as generalized \CuMor{s} $S_1\otimes T_1\otimes P_1\to S_2\otimes T_2\otimes P_2$.
The result follows by applying \autoref{prp:genProp:tensExt}.
\end{proof}

\begin{pbm}
\label{pbm:genProp:tensExt}
Study the order-theoretic properties of the external tensor product map $\tensExt\colon \ihom{S_1,T_1} \otimes \ihom{S_2,T_2} \to \ihom{S_1\otimes S_2, T_1\otimes T_2}$.
In particular, when is this map an order-embedding, when is it surjective?
\end{pbm}

We recall below the definition of the composition product and analyse its relation with the external tensor product.

\begin{dfn}
\label{dfn:genProp:composition}
Given \CuSgp{s} $S$, $T$ and $P$, we define the \emph{composition product}
\[
\circ\colon \ihom{T,P} \otimes \ihom{S,T} \to \ihom{S,P}
\]
as the \CuMor{} that under the identification
\[
\CatCuMor\big( \ihom{T,P} \otimes \ihom{S,T}, \ihom{S,P} \big)
\cong \CatCuMor\big( \ihom{T,P} \otimes \ihom{S,T} \otimes S, P \big)
\]
corresponds to the composition $\counit_{T,P}\circ(\id_{\ihom{T,P}}\otimes\counit_{S,T})$.
Given $x\in\ihom{S,T}$ and $y\in\ihom{T,P}$, we denote the image of $y\otimes x$ under the composition product by $y\circ x$.

Given $x\in\ihom{S,T}$, we let $x^*\colon\ihom{T,P}\to\ihom{S,P}$ be given by $x^*(y):=y\circ x$ for $y\in\ihom{T,P}$.
Analogously, given $y\in\ihom{T,P}$, we let $y_*\colon\ihom{S,T}\to\ihom{S,P}$ be given by $y_*(x):=y\circ x$ for $x\in\ihom{S,T}$.
\end{dfn}


The composition product for the internal-hom satisfies the axioms for the hom sets in an enriched category. In particular, the product is associative and  
the identity element $\id_S\in\CatCuMor(S,S)\subseteq\ihom{S,S}$ acts as a unit for the composition product (see \cite[Section 1.6]{Kel05EnrichedCat}). We recall these facts in the following proposition:

\begin{prp}
\label{prp:genProp:compAssoc}
Let $S,T,P$ and $Q$ be \CuSgp{s}, let $x\in\ihom{S,T}$, $y\in\ihom{T,P}$, and let $z\in\ihom{P,Q}$.
Then
\[
(z\circ y)\circ x = z\circ(y\circ x).
\]
Further, for the identity \CuMor{s} $\id_S\in\CatCuMor(S,S)$ and $\id_T\in\CatCuMor(T,T)$, we have
\[
\id_T\circ x = x = x\circ\id_S.
\]
\end{prp}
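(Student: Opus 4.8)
The plan is to reduce both claims to a concrete description of the composition product on paths; this is needed because, as noted after \autoref{dfn:genProp:counitMap}, equality of elements of $\ihom{S,Q}$ (or $\ihom{S,T}$) cannot be detected by evaluations alone, so each side must first be identified with a path, or with the value of a $\CatCu$-morphism. Concretely, the first step is to establish, exactly as in \autoref{prp:genProp:tensExt}, that if $\pathCu{f}=(f_\lambda)_\lambda$ is a path in $\CatCuMor[S,T]$ and $\pathCu{g}=(g_\lambda)_\lambda$ is a path in $\CatCuMor[T,P]$, then $(g_\lambda\circ f_\lambda)_\lambda$ is a path in $\CatCuMor[S,P]$ --- because $\lambda'<\lambda$ and $a'\ll a$ force $f_{\lambda'}(a')\ll f_\lambda(a)$, hence $g_{\lambda'}(f_{\lambda'}(a'))\ll g_\lambda(f_\lambda(a))$, i.e.\ $g_{\lambda'}\circ f_{\lambda'}\prec g_\lambda\circ f_\lambda$ --- and that
\[
[\pathCu{g}]\circ[\pathCu{f}]=[(g_\lambda\circ f_\lambda)_\lambda].
\]

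As in \autoref{prp:genProp:tensExt}, this identity is proved by checking that $([\pathCu{g}],[\pathCu{f}])\mapsto[(g_\lambda\circ f_\lambda)_\lambda]$ is a well-defined $\CatCu$-bimorphism $\ihom{T,P}\times\ihom{S,T}\to\ihom{S,P}$, and that the induced $\CatCu$-morphism agrees with $\circ$ under the correspondence of \autoref{thm:Cuclosedbijection}: using \autoref{prp:genProp:counitMap} and the definition of $\circ$, both adjuncts send $[\pathCu{g}]\otimes[\pathCu{f}]\otimes a$ to $[\pathCu{g}]([\pathCu{f}](a))=\sup_\lambda g_\lambda(f_\lambda(a))$ (a supremum of a family increasing in two indices equals its diagonal supremum). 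Granting the identity, associativity is immediate: with $x=[\pathCu{f}]$, $y=[\pathCu{g}]$, $z=[\pathCu{h}]$, both $(z\circ y)\circ x$ and $z\circ(y\circ x)$ equal $[((h_\lambda\circ g_\lambda)\circ f_\lambda)_\lambda]=[(h_\lambda\circ(g_\lambda\circ f_\lambda))_\lambda]$, since composition of maps is associative for each $\lambda$. For the unit, $\id_T$ is the compact element of $\ihom{T,T}$ represented by the constant path $(\id_T)_\lambda$ (valid, as $\id_T\prec\id_T$), so $\id_T\circ[\pathCu{f}]=[(\id_T\circ f_\lambda)_\lambda]=[\pathCu{f}]$, and symmetrically $[\pathCu{f}]\circ\id_S=[\pathCu{f}]$.

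A coordinate-free alternative avoids the path formula. View $(z\circ y)\circ x$ and $z\circ(y\circ x)$ as the values at $z\otimes y\otimes x$ of the $\CatCu$-morphisms $\circ\circ(\circ\otimes\id)$ and $\circ\circ(\id\otimes\circ)$ from $\ihom{P,Q}\otimes\ihom{T,P}\otimes\ihom{S,T}$ to $\ihom{S,Q}$ (using the monoidal associator). These two morphisms agree iff their adjuncts into $Q$ (under \autoref{thm:Cuclosedbijection}) agree, and since the adjuncts are $\CatCu$-morphisms out of a tensor product it is enough, by \autoref{prp:prelim:tensCu}, to compare them on pure tensors $z\otimes y\otimes x\otimes a$; unwinding the definition of $\circ$, \autoref{prp:genProp:counitMap}, and the interchange law for $\otimes$, both give the nested evaluation $z(y(x(a)))$. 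For the unit, $x\mapsto\id_T\circ x$ is a $\CatCu$-morphism $\ihom{S,T}\to\ihom{S,T}$ (since $\id_T\otimes\freeVar$ is a $\CatCu$-morphism, $\id_T$ being compact), whose adjunct sends $x\otimes a$ to $(\id_T\circ x)(a)=\counit_{T,T}(\id_T\otimes x(a))=\id_T(x(a))=x(a)$ by \autoref{rmk:genProp:counitMap} --- the adjunct of $\id_{\ihom{S,T}}$ --- so the two morphisms coincide; the case $x\circ\id_S=x$ is symmetric.

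The work in both routes is bookkeeping rather than mathematics. In the path route, the only non-trivial point is the verification --- ``tedious but straightforward'', as elsewhere in the paper --- that $([\pathCu{g}],[\pathCu{f}])\mapsto[(g_\lambda\circ f_\lambda)_\lambda]$ really is a $\CatCu$-bimorphism: independence of the chosen path representatives, monotonicity, and preservation of suprema of increasing sequences and of the joint way-below relation. In the coordinate-free route, the care goes into tracking which evaluation map appears where and the coherence isomorphisms of the monoidal structure under the repeated use of the adjunction. In either case the main pitfall to avoid is trying to prove equality of elements by comparing only their evaluations.
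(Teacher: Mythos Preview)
Your proposal is correct. Note, though, that the paper does not actually prove \autoref{prp:genProp:compAssoc}: it records the statement as a consequence of the general theory of closed categories and cites \cite[Section~1.6]{Kel05EnrichedCat}. Your coordinate-free alternative is essentially that enriched-category argument made explicit in $\CatCu$, so in that sense it matches the paper's route; your path-based argument is a genuinely different, more concrete approach. The path route has the side effect of establishing the formula $[\pathCu{g}]\circ[\pathCu{f}]=[(g_\lambda\circ f_\lambda)_\lambda]$, which the paper only proves afterwards as \autoref{prp:genProp:composition}; since that proposition does not rely on \autoref{prp:genProp:compAssoc}, there is no circularity, and in fact your ordering (first the concrete formula, then associativity and unit laws as immediate corollaries) is arguably cleaner. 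Your diagnosis of the main pitfall---that one must compare \CuMor{s} (via their adjuncts) rather than merely compare evaluations of elements---is exactly right and is what makes both routes work.
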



It follows that $\ihom{S,S}$ and $\ihom{T,T}$ are (not necessarily commutative) $\CatCu$-semirings and that $\ihom{S,T}$ has a natural left $\ihom{S,S}$- and right $\ihom{T,T}$-semimodule structure; see Propositions~\ref{prp:ring:ihomSS} and~\ref{prp:ring:ihomST} in the next section. In the following proposition we give an explicit description of the composition product.

\begin{prp}
\label{prp:genProp:composition}
Let $S,T,P$ be \CuSgp{s}, and let $\pathCu{f}=(f_\lambda)_\lambda$ and $\pathCu{g}=(g_\lambda)_\lambda$ be paths in $\CatCuMor[S,T]$ and $\CatCuMor[T,P]$, respectively.
For each $\lambda$, consider the generalized \CuMor{} $g_\lambda\circ f_\lambda\colon S\to P$.
Then $(g_\lambda\circ f_\lambda)_\lambda$ is a path in $\CatCuMor[S,P]$ and
\[
[\pathCu{g}]\circ[\pathCu{f}]
= [(g_\lambda\circ f_\lambda)_\lambda].
\]
\end{prp}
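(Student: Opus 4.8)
The plan is to follow the template of the proof of \autoref{prp:genProp:tensExt}, replacing the tensor product of generalized \CuMor{s} by composition. First I would check that $(g_\lambda\circ f_\lambda)_\lambda$ is a path in $\CatCuMor[S,P]$: each composite $g_\lambda\circ f_\lambda\colon S\to P$ is a generalized \CuMor, because composing two such maps preserves addition, $0$, order and suprema of increasing sequences; and for $\lambda'<\lambda$ one has $g_{\lambda'}\circ f_{\lambda'}\prec g_\lambda\circ f_\lambda$, since for $a'\ll a$ the path condition on $\pathCu{f}$ gives $f_{\lambda'}(a')\ll f_\lambda(a)$, and then the path condition on $\pathCu{g}$ applied to this relation gives $g_{\lambda'}(f_{\lambda'}(a'))\ll g_\lambda(f_\lambda(a))$.

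Next I would verify that the assignment $([\pathCu{q}],[\pathCu{p}])\mapsto[(q_\lambda\circ p_\lambda)_\lambda]$ is a well-defined $\CatCu$-bimorphism $\ihom{T,P}\times\ihom{S,T}\to\ihom{S,P}$, and let $\beta\colon\ihom{T,P}\otimes\ihom{S,T}\to\ihom{S,P}$ be the \CuMor{} it induces (\autoref{prp:prelim:tensCu}). As in \autoref{prp:genProp:tensExt}, this is tedious but routine: for well-definedness, if $\pathCu{p}\lesssim\pathCu{p}'$ and $\pathCu{q}\lesssim\pathCu{q}'$ then for each $\lambda$ one chooses $\mu$ large enough that $p_\lambda\prec p'_\mu$ and $q_\lambda\prec q'_\mu$ hold simultaneously, whence $q_\lambda\circ p_\lambda\prec q'_\mu\circ p'_\mu$ by the two-step argument above; additivity in each variable comes from additivity of the generalized \CuMor{s}; monotonicity is as in the well-definedness argument; and preservation of suprema of increasing sequences uses that these are formed pointwise in $\CatCuMor[-,-]$ and in the $\tau$-construction, together with a diagonal argument.

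Finally I would prove $\beta=\circ$, which immediately yields $[\pathCu{g}]\circ[\pathCu{f}]=\beta([\pathCu{g}]\otimes[\pathCu{f}])=[(g_\lambda\circ f_\lambda)_\lambda]$. It suffices to check that the \CuMor{s} $\bar{\beta}$ and $\bar{\circ}$ corresponding to $\beta$ and to the composition product under
\[
\CatCuMor\big(\ihom{T,P}\otimes\ihom{S,T},\ihom{S,P}\big)\cong\CatCuMor\big(\ihom{T,P}\otimes\ihom{S,T}\otimes S,P\big)
\]
agree on all elementary tensors $[\pathCu{q}]\otimes[\pathCu{p}]\otimes s$, since a \CuMor{} out of a triple tensor product is determined by its values on elementary tensors (\autoref{prp:prelim:tensCu}) and every element of an internal-hom is the class of a path. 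On one hand, \autoref{thm:Cuclosedbijection} gives
\[
\bar{\beta}\big([\pathCu{q}]\otimes[\pathCu{p}]\otimes s\big)=\sigma_{S,P}\big([(q_\lambda\circ p_\lambda)_\lambda]\big)(s)=\sup_\lambda q_\lambda\big(p_\lambda(s)\big);
\]
on the other, \autoref{dfn:genProp:composition} together with \autoref{prp:genProp:counitMap} gives
\[
\bar{\circ}\big([\pathCu{q}]\otimes[\pathCu{p}]\otimes s\big)=\sigma_{T,P}\big([\pathCu{q}]\big)\Big(\sigma_{S,T}\big([\pathCu{p}]\big)(s)\Big)=\sup_\mu q_\mu\Big(\sup_\nu p_\nu(s)\Big).
\]
Since each $q_\mu$ preserves suprema of increasing sequences, the right-hand side equals $\sup_\mu\sup_\nu q_\mu(p_\nu(s))=\sup_{\mu,\nu}q_\mu(p_\nu(s))$, and as the family $q_\mu(p_\nu(s))$ is increasing in $(\mu,\nu)$ with the diagonal cofinal in $I_\QQ\times I_\QQ$, this equals $\sup_\lambda q_\lambda(p_\lambda(s))$. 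Hence $\bar{\beta}=\bar{\circ}$ and so $\beta=\circ$.

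I do not expect a conceptual obstacle: the single genuine computation is the identity $\sup_\mu q_\mu(\sup_\nu p_\nu(s))=\sup_\lambda q_\lambda(p_\lambda(s))$ in the last step (cofinality of the diagonal), while the real work is the routine verification of the bimorphism axioms in the second step, especially compatibility with increasing suprema, which requires unwinding how suprema are formed in $\tau(\CatCuMor[-,-],\prec)$.
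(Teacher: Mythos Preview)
Your proposal is correct and follows essentially the same approach as the paper's proof: define the candidate bimorphism from composition of path components, pass to the induced \CuMor{}, and compare with the composition product via the adjunction bijection by evaluating both on elementary tensors $[\pathCu{q}]\otimes[\pathCu{p}]\otimes s$. The only cosmetic difference is that the paper abbreviates the final equality by writing $q_1:=\sup_\lambda q_\lambda$ and $p_1:=\sup_\lambda p_\lambda$ so that both sides are immediately $q_1(p_1(s))$, whereas you spell out the diagonal cofinality argument explicitly.
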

\begin{proof}
It is easy to check that $(g_\lambda\circ f_\lambda)_\lambda$ is a path.
Moreover, it is tedious but straightforward to check that the map $\ihom{T,P}\times\ihom{S,T}\to\ihom{S,P}$ that sends a pair $([\pathCu{p}],[\pathCu{q}])$ to $[(q_\lambda\circ p_\lambda)_\lambda]$ is a well-defined $\CatCu$-bimorphism.
We let $\alpha\colon \ihom{T,P}\otimes\ihom{S,T}\to\ihom{S,P}$ be the induced \CuMor.

To show that $[\pathCu{g}]\circ[\pathCu{f}]=[(g_\lambda\otimes f_\lambda)_\lambda]$, we will prove that the composition product $\circ$ and the map $\alpha$ correspond to the same \CuMor{} under the bijection
\[
\CatCuMor\big( \ihom{T,P} \otimes \ihom{S,T}, \ihom{S,P} \big)
\cong \CatCuMor\big( \ihom{T,P} \otimes \ihom{S,T} \otimes S, P \big)
\]
from \autoref{thm:Cuclosedbijection}.

Let $\pathCu{p}=(p_\lambda)_\lambda$ and $\pathCu{q}=(q_\lambda)_\lambda$ be paths in $\CatCuMor[S,T]$ and $\CatCuMor[T,P]$, respectively, and let $s\in S$.
Set $p_1:=\sup\limits_{\lambda<1} p_\lambda$ and $q_1:=\sup\limits_{\lambda<1}q_\lambda$.
By definition, we have
\begin{align*}
\bar{\circ}([\pathCu{q}] \otimes [\pathCu{p}] \otimes s)
&= \counit_{T,P}\circ(\id_{\ihom{T,P}}\otimes\counit_{S,T}) ([\pathCu{q}] \otimes [\pathCu{p}] \otimes s) 
= \counit_{T,P}([\pathCu{q}]\otimes \counit_{S,T}([\pathCu{p}] \otimes s)) \\
&= \counit_{T,P}([\pathCu{q}]\otimes p_1(s))
= q_1(p_1(s)).
\end{align*}
On the other hand, using \autoref{thm:Cuclosedbijection} at the first step, we obtain that
\begin{align*}
\bar{\alpha}([\pathCu{q}] \otimes [\pathCu{p}] \otimes s)
&=\sigma_{S,P}(\alpha([\pathCu{q}] \otimes [\pathCu{p}]))(s) 
=\sigma_{S,P}([(q_\lambda\circ p_\lambda)_\lambda])(s) \\
&=(q_1\circ p_1)(s)
= q_1(p_1(s)).
\end{align*}
It follows that $\circ=\alpha$ and therefore
\[
[\pathCu{g}]\circ[\pathCu{f}]
= \circ([\pathCu{g}]\otimes[\pathCu{f}])
= \alpha([\pathCu{g}]\otimes[\pathCu{f}])
= [(g_\lambda\otimes f_\lambda)_\lambda]. \qedhere
\]
\end{proof}

Note that, in \autoref{prp:genProp:composition}, the composition product of two \CuMor{s}, viewed as compact elements in the internal-hom set, is the usual composition of morphisms as maps.

Next we show that the composition product is compatible with the evaluation map in the expected way.
It will follow later that the evaluation map $\counit_{S,S}\colon\ihom{S,S}\otimes S\to S$ defines a natural left $\ihom{S,S}$-semimodule structure on $S$;
see \autoref{prp:ring:ihomSS_act_S}.

\begin{lma}
\label{prp:genProp:endpointMapAssociative}
Let $S,T$ and $P$ be \CuSgp{s}, let $x\in\ihom{S,T}$, and let $y\in\ihom{T,P}$.
Then
\[
\sigma_{S,P}(y\circ x) = \sigma_{T,P}(y)\circ\sigma_{S,T}(x).
\]
\end{lma}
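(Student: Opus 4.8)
The plan is to reduce everything to the explicit path-level descriptions established in Propositions~\ref{prp:genProp:composition} and~\ref{prp:genProp:counitMap}. Write $x=[\pathCu{f}]$ for a path $\pathCu{f}=(f_\lambda)_\lambda$ in $\CatCuMor[S,T]$ and $y=[\pathCu{g}]$ for a path $\pathCu{g}=(g_\lambda)_\lambda$ in $\CatCuMor[T,P]$; such paths exist because $\ihom{S,T}$ and $\ihom{T,P}$ are obtained via the $\tau$-construction. By \autoref{prp:genProp:composition}, $y\circ x=[(g_\lambda\circ f_\lambda)_\lambda]$. The endpoint map, by \autoref{dfn:endpoint}, sends a path-class to the pointwise supremum of the path, so for $a\in S$ we have $\sigma_{S,P}(y\circ x)(a)=\sup_{\lambda\in I_\QQ}(g_\lambda\circ f_\lambda)(a)$.

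Next I would compute the right-hand side at $a$. By \autoref{dfn:endpoint}, $\sigma_{S,T}(x)(a)=\sup_{\mu\in I_\QQ} f_\mu(a)$, and then $\bigl(\sigma_{T,P}(y)\circ\sigma_{S,T}(x)\bigr)(a)=\sigma_{T,P}(y)\bigl(\sup_\mu f_\mu(a)\bigr)$. Since $\sigma_{T,P}(y)$ is a $\CatCu$-morphism (the endpoint map lands in $\CatCuMor[T,P]$), it preserves suprema of increasing sequences, so this equals $\sup_\mu \sigma_{T,P}(y)(f_\mu(a))=\sup_\mu \sup_\nu g_\nu(f_\mu(a))$. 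The remaining task is to identify the double supremum $\sup_\mu\sup_\nu g_\nu(f_\mu(a))$ with the single diagonal supremum $\sup_\lambda g_\lambda(f_\lambda(a))$.

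The main obstacle is exactly this comparison of suprema, and it is handled by a cofinality argument. The diagonal sequence $(g_\lambda\circ f_\lambda)(a)$ is increasing in $\lambda$, and each diagonal term is dominated by a term of the double family (take $\mu=\nu=\lambda$), so $\sup_\lambda g_\lambda(f_\lambda(a))\le\sup_\mu\sup_\nu g_\nu(f_\mu(a))$. For the reverse inequality, fix $\mu,\nu\in I_\QQ$ and pick any $\lambda\in I_\QQ$ with $\mu<\lambda$ and $\nu<\lambda$; since $\pathCu{f}$ is a path, $f_\mu\prec f_\lambda$, hence in particular $f_\mu\le f_\lambda$ (using axiom~(1) of an auxiliary relation), and since $\pathCu{g}$ is a path, $g_\nu\le g_\lambda$ similarly. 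Monotonicity of $g_\nu$ then gives $g_\nu(f_\mu(a))\le g_\lambda(f_\mu(a))\le g_\lambda(f_\lambda(a))\le\sup_{\lambda'} g_{\lambda'}(f_{\lambda'}(a))$. Taking suprema over $\mu$ and $\nu$ yields the reverse inequality, so the two sides agree at every $a\in S$. Since a generalized $\CatCu$-morphism out of $S$ is determined by its values, this proves $\sigma_{S,P}(y\circ x)=\sigma_{T,P}(y)\circ\sigma_{S,T}(x)$.
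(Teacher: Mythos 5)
Your proposal is correct and follows essentially the same route as the paper: represent $x$ and $y$ by paths, invoke \autoref{prp:genProp:composition} to write $y\circ x=[(g_\lambda\circ f_\lambda)_\lambda]$, and then identify the diagonal supremum $\sup_\lambda g_\lambda(f_\lambda(a))$ with the iterated supremum $\sup_\mu g_\mu\bigl(\sup_\nu f_\nu(a)\bigr)$. The paper asserts this last interchange in a single step, whereas you supply the cofinality argument explicitly (note only that $\sigma_{T,P}(y)$ is a \emph{generalized} \CuMor{}, which still preserves suprema of increasing sequences, so your argument is unaffected).
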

\begin{proof}
Let $\pathCu{f}=(f_\lambda)_\lambda$ be a path in $\CatCuMor[S,T]$ representing $x$, and let $\pathCu{g}=(g_\lambda)_\lambda$ be a path in $\CatCuMor[T,P]$ representing $y$.
Let $a\in S$.
By \autoref{prp:genProp:composition}, we have $y\circ x=[(g_\lambda\circ f_\lambda)_\lambda]$.
Using this at the first step, we obtain that
\[
\sigma_{S,P}(y\circ x)(a)
= \sup_{\lambda\in I_\QQ} (g_\lambda\circ f_\lambda)(a)
= \sup_{\mu\in I_\QQ} g_\mu \left( \sup_{\lambda\in I_\QQ} f_\lambda(a) \right)
= \sigma_{T,P}(y)\left( \sigma_{S,T}(x)(a) \right),
\]
as desired.
\end{proof}

By combining \autoref{prp:genProp:endpointMapAssociative} with \autoref{prp:genProp:counitMap}, we obtain:

\begin{prp}
\label{prp:genProp:comp_counit}
Let $S,T$ and $P$ be \CuSgp{s}, let $x\in\ihom{S,T}$, let $y\in\ihom{T,P}$, and let $a\in S$.
Then
\[
(y\circ x)(a) = y(x(a)).
\]
Moreover, for the identity \CuMor{} $\id_S\in\CatCuMor(S,S)$, we have $\id_S(a)=a$.
\end{prp}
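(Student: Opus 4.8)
The plan is to derive both assertions directly from the two results cited just before the statement, namely \autoref{prp:genProp:endpointMapAssociative} and \autoref{prp:genProp:counitMap}. Recall that \autoref{prp:genProp:counitMap} identifies the evaluation notation with the endpoint map: for any \CuSgp{s} $S,T$, any $x\in\ihom{S,T}$ and any $a\in S$, one has $x(a)=\counit_{S,T}(x\otimes a)=\sigma_{S,T}(x)(a)$. This reduces every statement about the evaluation notation to a statement about endpoint maps and honest evaluation of generalized \CuMor{s} as maps.

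First I would prove the identity $(y\circ x)(a)=y(x(a))$. Applying \autoref{prp:genProp:counitMap} to $y\circ x\in\ihom{S,P}$, we get $(y\circ x)(a)=\sigma_{S,P}(y\circ x)(a)$. Now invoke \autoref{prp:genProp:endpointMapAssociative}, which gives $\sigma_{S,P}(y\circ x)=\sigma_{T,P}(y)\circ\sigma_{S,T}(x)$ as generalized \CuMor{s} from $S$ to $P$; evaluating at $a$ yields $\sigma_{T,P}(y)\bigl(\sigma_{S,T}(x)(a)\bigr)$. Applying \autoref{prp:genProp:counitMap} twice more — once to rewrite $\sigma_{S,T}(x)(a)$ as $x(a)\in T$, and once to rewrite $\sigma_{T,P}(y)$ evaluated at that element as $y(x(a))$ — gives the claim. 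So the computation is simply
\[
(y\circ x)(a)=\sigma_{S,P}(y\circ x)(a)=\sigma_{T,P}(y)\bigl(\sigma_{S,T}(x)(a)\bigr)=y(x(a)).
\]

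For the final sentence, $\id_S$ is viewed as a compact element of $\ihom{S,S}$ via the inclusion $\CatCuMor(S,S)\subseteq\ihom{S,S}$, and by \autoref{rmk:genProp:counitMap} the evaluation notation $\id_S(a)=\counit_{S,S}(\id_S\otimes a)$ agrees with the ordinary evaluation of the identity map, which is $a$. Alternatively one can observe that $\sigma_{S,S}$ sends the (constant) path representing $\id_S$ to the map $a\mapsto\sup_{\lambda<1}\id_S(a)=a$, and then apply \autoref{prp:genProp:counitMap}. Either way the verification is immediate. There is no real obstacle here: the only point requiring a moment's care is to keep straight which of the three \CuSgp{s} each evaluation lives over, so that \autoref{prp:genProp:counitMap} is applied in the correct internal-hom; once the endpoint-map translation is in place, associativity of the composition product at the level of endpoint maps (already established in \autoref{prp:genProp:endpointMapAssociative}) does all the work.
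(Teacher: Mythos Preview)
Your proof is correct and follows exactly the approach indicated in the paper, which simply states that the result is obtained by combining \autoref{prp:genProp:endpointMapAssociative} with \autoref{prp:genProp:counitMap}. Your explicit unwinding of this combination, together with the appeal to \autoref{rmk:genProp:counitMap} (or the direct endpoint computation) for the identity case, is precisely what is intended.
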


The following result shows that the external tensor product and the composition product commute.

\begin{prp}
\label{prp:genProp:tensExtComp}
Let $S_1,S_2,T_1$ and $T_2$ be \CuSgp{s}.
Given $x_k\in\ihom{S_k,T_k}$ and $y_k\in\ihom{T_k,P_k}$ for $k=1,2$, we have
\[
(y_2\tensExt y_1)\circ(x_2\tensExt x_1)
= (y_2\circ x_2)\tensExt(y_1\circ x_1).
\]
\end{prp}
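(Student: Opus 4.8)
The plan is to reduce everything to the concrete path-level descriptions of $\tensExt$ and $\circ$ established in \autoref{prp:genProp:tensExt} and \autoref{prp:genProp:composition}, and then observe that the identity is already true at the level of individual generalized $\CatCu$-morphisms. First I would choose paths $\pathCu{f}_k = (f_{k,\lambda})_\lambda$ in $\CatCuMor[S_k,T_k]$ representing $x_k$ and paths $\pathCu{g}_k = (g_{k,\lambda})_\lambda$ in $\CatCuMor[T_k,P_k]$ representing $y_k$, for $k=1,2$. By \autoref{prp:genProp:tensExt}, $x_2\tensExt x_1$ is represented by the path $(f_{2,\lambda}\otimes f_{1,\lambda})_\lambda$ in $\CatCuMor[S_2\otimes S_1, T_2\otimes T_1]$, and likewise $y_2\tensExt y_1$ is represented by $(g_{2,\lambda}\otimes g_{1,\lambda})_\lambda$. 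Then by \autoref{prp:genProp:composition}, the left-hand side $(y_2\tensExt y_1)\circ(x_2\tensExt x_1)$ is represented by the path
\[
\big( (g_{2,\lambda}\otimes g_{1,\lambda})\circ(f_{2,\lambda}\otimes f_{1,\lambda}) \big)_\lambda.
\]
On the other hand, by \autoref{prp:genProp:composition}, $y_k\circ x_k$ is represented by $(g_{k,\lambda}\circ f_{k,\lambda})_\lambda$, and hence by \autoref{prp:genProp:tensExt} the right-hand side $(y_2\circ x_2)\tensExt(y_1\circ x_1)$ is represented by $\big( (g_{2,\lambda}\circ f_{2,\lambda})\otimes(g_{1,\lambda}\circ f_{1,\lambda}) \big)_\lambda$.

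So it suffices to show that for each fixed $\lambda$, the two generalized $\CatCu$-morphisms
\[
(g_{2,\lambda}\otimes g_{1,\lambda})\circ(f_{2,\lambda}\otimes f_{1,\lambda})
\andSep
(g_{2,\lambda}\circ f_{2,\lambda})\otimes(g_{1,\lambda}\circ f_{1,\lambda})
\]
from $S_2\otimes S_1$ to $P_2\otimes P_1$ coincide. Both are generalized $\CatCu$-morphisms, so by the universal property of the tensor product (\autoref{prp:prelim:tensCu}) it is enough to check that they agree on elementary tensors $a_2\otimes a_1$ with $a_i\in S_i$. On such an element the first map gives $(g_{2,\lambda}\otimes g_{1,\lambda})(f_{2,\lambda}(a_2)\otimes f_{1,\lambda}(a_1)) = g_{2,\lambda}(f_{2,\lambda}(a_2))\otimes g_{1,\lambda}(f_{1,\lambda}(a_1))$, while the second gives $(g_{2,\lambda}\circ f_{2,\lambda})(a_2)\otimes(g_{1,\lambda}\circ f_{1,\lambda})(a_1) = g_{2,\lambda}(f_{2,\lambda}(a_2))\otimes g_{1,\lambda}(f_{1,\lambda}(a_1))$; these are equal. (This is the same bookkeeping underlying \autoref{prp:genProp:tensExtAssoc}, where the analogous pointwise identity $(f\otimes g)\otimes h = f\otimes(g\otimes h)$ was used.)

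Finally, passing back to equivalence classes of paths via \autoref{prp:genProp:tensExt} and \autoref{prp:genProp:composition} gives
\[
(y_2\tensExt y_1)\circ(x_2\tensExt x_1)
= \big[ \big( (g_{2,\lambda}\circ f_{2,\lambda})\otimes(g_{1,\lambda}\circ f_{1,\lambda}) \big)_\lambda \big]
= (y_2\circ x_2)\tensExt(y_1\circ x_1),
\]
as required. I do not anticipate a serious obstacle here: the only mildly delicate point is making sure the path-level formula from \autoref{prp:genProp:tensExt} is applied to the correct internal-hom (note the reordering of tensor factors in the statement — the paper writes $S_2\otimes S_1$ etc.), and that one is entitled to combine \autoref{prp:genProp:tensExt} and \autoref{prp:genProp:composition} freely since both describe the relevant $\CatCu$-morphisms on representatives; the pointwise equality of generalized $\CatCu$-morphisms is then immediate from how the tensor product of morphisms acts on elementary tensors.
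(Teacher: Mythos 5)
Your proposal is correct and follows essentially the same route as the paper: represent all four elements by paths, use \autoref{prp:genProp:tensExt} and \autoref{prp:genProp:composition} to reduce to the identity $(g_{2,\lambda}\otimes g_{1,\lambda})\circ(f_{2,\lambda}\otimes f_{1,\lambda}) = (g_{2,\lambda}\circ f_{2,\lambda})\otimes(g_{1,\lambda}\circ f_{1,\lambda})$ for each fixed $\lambda$, and verify that identity on elementary tensors. The paper leaves that last pointwise check as ``straightforward''; you spell it out, which is fine.
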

\begin{proof}
Let $\pathCu{f^{(k)}}=(f^{(k)}_\lambda)_\lambda$ be a path in $\CatCuMor[S_k,T_k]$ representing $x_k$, for $k=1,2$, and let $\pathCu{g^{(k)}}=(g^{(k)}_\lambda)_\lambda$ be a path in $\CatCuMor[T_k,P_k]$ representing $y_k$, for $k=1,2$.
Given $\lambda$, it is straightforward to check that
\[
(g^{(2)}_\lambda\otimes g^{(1)}_\lambda) \circ (f^{(2)}_\lambda\otimes f^{(1)}_\lambda)
= (g^{(2)}_\lambda \circ f^{(2)}_\lambda) \otimes (g^{(1)}_\lambda\circ f^{(1)}_\lambda).
\]
Using this at the second step, and using \autoref{prp:genProp:tensExt} and \autoref{prp:genProp:composition} at the first and last step, we obtain that
\begin{align*}
(y_2\tensExt y_1)\circ(x_2\tensExt x_1)
&= \left[ \big( (g^{(2)}_\lambda\otimes g^{(1)}_\lambda) \circ (f^{(2)}_\lambda\otimes f^{(1)}_\lambda) \big)_\lambda \right] \\
&= \left[ \big( (g^{(2)}_\lambda \circ f^{(2)}_\lambda) \otimes (g^{(1)}_\lambda\circ f^{(1)}_\lambda) \big)_\lambda \right] \\
&= (y_2\circ x_2)\tensExt(y_1\circ x_1). \qedhere
\end{align*}
\end{proof}

In the last part of this section, we revisit the unit and counit maps, their functorial properties, and how they can be used to implement the adjuntion between the tensor product and the internal-hom functors.

\begin{prp}
\label{prp:genProp:correspondence}
Let $S,T$ and $P$ be \CuSgp{s}.
Then the bijection
\[
\CatCuMor\big( S,\ihom{T,P} \big) \cong \CatCuMor\big( S\otimes T,P \big)
\]
from \autoref{thm:Cuclosedbijection} identifies a \CuMor{} $f\colon S\to\ihom{T,P}$ with
\[
\counit_{T,P}\circ(f\otimes\id_T) \colon S\otimes T \xrightarrow{f\otimes\id_T} \ihom{T,P}\otimes T \xrightarrow{\counit_{T,P}} P.
\]
Conversely, a \CuMor{} $g\colon S\otimes T\to P$ is identified with
\[
g_\ast\circ\unit_{S,T} \colon S\xrightarrow{\unit_{S,T}} \ihom{T,S\otimes T} \xrightarrow{g_\ast} \ihom{T,P}.
\]
In particular, we have
\begin{align*}
f = \left( \counit_{T,P}\circ(f\otimes\id_T) \right)_\ast\circ \unit_{S,T},\andSep
g = \counit_{T,P}\circ((g_\ast\circ\unit_{S,T})\otimes\id_T).
\end{align*}
\end{prp}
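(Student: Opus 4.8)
The plan is to verify the two displayed formulas directly, using the concrete descriptions of $\unit$, $\counit$ and of the bijection in \autoref{thm:Cuclosedbijection}, and then to note that the two ``in particular'' identities fall out immediately because that bijection is injective. Throughout I use repeatedly that a \CuMor{} out of a tensor product $S\otimes T$ is determined by its values on elementary tensors (\autoref{prp:prelim:tensCu}(2)).

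\emph{Forward direction.} Let $f\colon S\to\ihom{T,P}$ be a \CuMor. By \autoref{thm:Cuclosedbijection}, $f$ corresponds to the \CuMor{} $\beta\colon S\otimes T\to P$ determined by $\beta(a\otimes b)=\sigma_{T,P}(f(a))(b)$ for $a\in S$ and $b\in T$. On the other hand, $f\otimes\id_T$ sends an elementary tensor $a\otimes b$ to $f(a)\otimes b$, so \autoref{prp:genProp:counitMap} gives
\[
\big(\counit_{T,P}\circ(f\otimes\id_T)\big)(a\otimes b)
=\counit_{T,P}(f(a)\otimes b)
=\sigma_{T,P}(f(a))(b).
\]
Thus $\beta$ and $\counit_{T,P}\circ(f\otimes\id_T)$ agree on all elementary tensors, whence they coincide.

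\emph{Backward direction.} Let $g\colon S\otimes T\to P$ be a \CuMor{} and let $f\colon S\to\ihom{T,P}$ be the \CuMor{} to which it corresponds. Since the correspondence is a bijection, to prove $f=g_\ast\circ\unit_{S,T}$ it suffices to show that $g_\ast\circ\unit_{S,T}$ is sent to $g$, and by the forward direction this amounts to checking $\counit_{T,P}\circ((g_\ast\circ\unit_{S,T})\otimes\id_T)=g$. Evaluating the left-hand side on an elementary tensor $a\otimes b$, unwinding $g_\ast(x)=g\circ x$, and applying in turn \autoref{prp:genProp:counitMap}, \autoref{prp:genProp:comp_counit} and \autoref{prp:genProp:unitMapEndpoint}, we obtain
\[
\big(\counit_{T,P}\circ((g_\ast\circ\unit_{S,T})\otimes\id_T)\big)(a\otimes b)
=\big(g\circ\unit_{S,T}(a)\big)(b)
=g\big(\unit_{S,T}(a)(b)\big)
=g(a\otimes b),
\]
where $g(\freeVar)$ is read as the ordinary evaluation of the morphism $g$ via \autoref{rmk:genProp:counitMap}. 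By \autoref{prp:prelim:tensCu}(2) this identifies $\counit_{T,P}\circ((g_\ast\circ\unit_{S,T})\otimes\id_T)$ with $g$, so $g_\ast\circ\unit_{S,T}$ corresponds to $g$ and therefore equals $f$.

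\emph{The two identities and the main point.} Starting from a \CuMor{} $f$, mapping forward to $g:=\counit_{T,P}\circ(f\otimes\id_T)$ and then back gives $g_\ast\circ\unit_{S,T}$, which by injectivity of the correspondence must be $f$; this is the first identity, and the second is obtained symmetrically, starting from $g$. I do not expect a genuine obstacle here: the only care required is bookkeeping---tracking the flip and associativity isomorphisms hidden in $f\otimes\id_T$, $g_\ast$, and $\unit_{S,T}$, and using consistently that \CuMor{s} into or out of a tensor product are pinned down on elementary tensors. (Alternatively, one could first check that the $\unit$ and $\counit$ of \autoref{dfn:genProp:unitMap} and \autoref{dfn:genProp:counitMap} are the unit and counit of the adjunction $\freeVar\otimes T\dashv\ihom{T,\freeVar}$ and then invoke the standard triangle identities, but the hands-on computation above is more in the spirit of this section.)
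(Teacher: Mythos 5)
Your proof is correct and follows essentially the same route as the paper: both directions are verified by evaluating on simple tensors, using the explicit formula for the bijection from \autoref{thm:Cuclosedbijection} together with \autoref{prp:genProp:counitMap} and \autoref{prp:genProp:unitMapEndpoint}. The only cosmetic difference is that you reduce the backward direction to the forward one via injectivity of the correspondence (using \autoref{prp:genProp:comp_counit} to compute $\counit_{T,P}\circ((g_\ast\circ\unit_{S,T})\otimes\id_T)$), whereas the paper computes the bimorphism associated to $g_\ast\circ\unit_{S,T}$ directly, using the equivalent fact that the endpoint map intertwines $g_\ast$; the underlying computation is the same.
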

\begin{proof}
Let $f\colon S\to\ihom{T,P}$ be a \CuMor.
Under the natural bijection from \autoref{thm:Cuclosedbijection}, $f$ corresponds to the \CuMor{} $\bar{f}\colon S\otimes T\to P$ with
\[
\bar{f}(s\otimes t)=\sigma_{T,P}(f(s))(t),
\]
for a simple tensor $s\otimes t\in S\otimes T$.
On the other hand, we have
\[
(\counit_{T,P}\circ(f\otimes\id_T))(s\otimes t)
= \counit_{T,P}(f(s) \otimes t)
= \sigma_{T,P}(f(s))(t),
\]
for a simple tensor $s\otimes t\in S\otimes T$.
Thus $\bar{f}$ and $\counit_{T,P}\circ(f\otimes\id_T)$ agree on simple tensors, and consequently $\bar{f}=\counit_{T,P}\circ(f\otimes\id_T)$, as desired.

Let $g\colon S\otimes T\to P$ be a \CuMor.
Set $\alpha:=g_\ast\circ\unit_{S,T}$.
Under the natural bijection from \autoref{thm:Cuclosedbijection}, $\alpha$ corresponds to the \CuMor{} $\bar{\alpha}\colon S\otimes T\to P$ with
\[
\bar{\alpha}(s\otimes t)=\sigma_{T,P}(\alpha(s))(t),
\]
for a simple tensor $s\otimes t\in S\otimes T$.
It is straightforward to verify that $\sigma_{T,P}\circ g_\ast = g_\ast\circ\sigma_{T,S\otimes T}$.
Using this at the third step, and using \autoref{prp:genProp:unitMapEndpoint} at the fourth step, we deduce that
\begin{align*}
\bar{\alpha}(s\otimes t)
=\sigma_{T,P}(\alpha(s))(t)
&=(\sigma_{T,P}\circ g_\ast\circ\unit_{S,T})(s)(t) \\
&=(g_\ast\circ\sigma_{T,S\otimes P}\circ\unit_{S,T})(s)(t)
=g(s\otimes t),
\end{align*}
for every simple tensor $s\otimes t\in S\otimes T$.
Thus, $\bar{\alpha}=g$, as desired.
\end{proof}

Applying the previous result to the identity morphisms, we obtain:

\begin{cor}
\label{prp:genProp:unit_counit}
Let $S$ and $T$ be \CuSgp{s}.
Then
\begin{align*}
\id_{\ihom{S,T}} = (\counit_{S,T})_\ast\circ\unit_{\ihom{S,T},S}, \andSep
\id_{S\otimes T} = \counit_{T,S\otimes T}\circ(\unit_{S,T}\otimes\id_T).
\end{align*}
\end{cor}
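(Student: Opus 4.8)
The plan is to derive both identities directly from Proposition~\ref{prp:genProp:correspondence} by specializing its two ``in particular'' formulas to identity morphisms; indeed, these are nothing but the standard zig-zag (triangle) identities for the adjunction $\freeVar\otimes T\dashv\ihom{T,\freeVar}$.

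For the first identity, I would apply the formula $f = \big(\counit_{T,P}\circ(f\otimes\id_T)\big)_\ast\circ\unit_{S,T}$ of Proposition~\ref{prp:genProp:correspondence} with the substitutions $S\rightsquigarrow\ihom{S,T}$, $T\rightsquigarrow S$, $P\rightsquigarrow T$, and with $f\rightsquigarrow\id_{\ihom{S,T}}$ (note that under these substitutions $\ihom{T,P}$ becomes $\ihom{S,T}$, so this choice of $f$ is legitimate). Since $\id_{\ihom{S,T}}\otimes\id_S=\id_{\ihom{S,T}\otimes S}$, precomposing $\counit_{S,T}$ with it changes nothing, and the right-hand side collapses to $(\counit_{S,T})_\ast\circ\unit_{\ihom{S,T},S}$, which is the first claim.

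For the second identity, I would apply the formula $g = \counit_{T,P}\circ\big((g_\ast\circ\unit_{S,T})\otimes\id_T\big)$ of Proposition~\ref{prp:genProp:correspondence} with $P\rightsquigarrow S\otimes T$ and $g\rightsquigarrow\id_{S\otimes T}$. The only point that needs a word of justification is that $(\id_{S\otimes T})_\ast=\id_{\ihom{T,S\otimes T}}$; this follows from the unitality of the composition product recorded in Proposition~\ref{prp:genProp:compAssoc}, since $(\id_{S\otimes T})_\ast(x)=\id_{S\otimes T}\circ x=x$ for every $x\in\ihom{T,S\otimes T}$. Hence $(\id_{S\otimes T})_\ast\circ\unit_{S,T}=\unit_{S,T}$, and the right-hand side becomes $\counit_{T,S\otimes T}\circ(\unit_{S,T}\otimes\id_T)$, as desired.

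I do not expect a genuine obstacle here: both statements are formal consequences of Proposition~\ref{prp:genProp:correspondence}. The only thing to watch is the bookkeeping of the variable substitutions and the elementary observations that a tensor product of identity morphisms, and the lower-star operator attached to an identity morphism, are again identity morphisms.
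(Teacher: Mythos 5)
Your proposal is correct and matches the paper's own argument, which likewise obtains the corollary by applying \autoref{prp:genProp:correspondence} to the identity morphisms; your bookkeeping of the substitutions and the observation that $(\id_{S\otimes T})_\ast=\id_{\ihom{T,S\otimes T}}$ (via \autoref{prp:genProp:compAssoc}) are exactly the details the paper leaves implicit.
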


Given \CuSgp{s} $S$ and $T$, we consider the unit map $\unit_{S,T}\colon S\to\ihom{T,S\otimes T}$ from \autoref{dfn:genProp:unitMap}.
Next, we introduce a more general form of the unit map.

\begin{dfn}
\label{dfn:genProp:genUnit}
Let $S,T$ and $T'$ be \CuSgp{s}.
We define \emph{the general left unit map} $S\otimes\ihom{T',T} \to \ihom{T', S\otimes T}$ as the \CuMor{} that under the identification
\[
\CatCuMor\big( S\otimes\ihom{T',T}, \ihom{T', S\otimes T} \big)
\cong \CatCuMor\big( S\otimes\ihom{T',T}\otimes T', S\otimes T \big)
\]
corresponds to the map $\id_S\otimes\counit_{T',T}$.
Given $a\in S$ and $x\in\ihom{T',T}$, we denote the image of $a\otimes x$ under this map by $\prescript{}{a}{x}$.

Analogously, we define \emph{the general right unit map}
$\ihom{T',T}\otimes S \to \ihom{T', T\otimes S}$ as the \CuMor{} that under the identification
\[
\CatCuMor\big( \ihom{T',T}\otimes S, \ihom{T', T\otimes S} \big)
\cong \CatCuMor\big( \ihom{T',T}\otimes S\otimes T', T\otimes S \big)
\]
corresponds to the map $(\counit_{T',T}\otimes\id_S)\circ(\id_{\ihom{T',T}}\otimes\sigma)$, where $\sigma$ denotes the flip isomorphism.
Given $a\in S$ and $x\in\ihom{T',T}$, we denote the image of $x\otimes a$ under this map by $x_a$.
\end{dfn}

We leave the proof of the following result to the reader.

\begin{prp}
\label{prp:genProp:genUnit}
Let $S,T$ and $T'$ be \CuSgp{s}, let $a$ be an element in $S$, and let $x$ be an element in $\ihom{T',T}$.
Let $\iIso_S\colon S\to \ihom{\NNbar, S}$ be the isomorphism from \autoref{dfn:genProp:iIso},
and let $\ltIso_{T'}\colon \NNbar\otimes T'\to T'$ and $\rtIso_{T'}\colon T'\otimes\NNbar \to T'$ be the natural $\CatCu$-isomorphism.
Then
\[
\prescript{}{a}{x}
= (\iIso_S(a)\otimes x)\circ \ltIso_{T'}^{-1}
= \unit_{S,T}(a)\circ x
= (\id_S\otimes x)\circ \unit_{S,T'}(a).
\]
and analogously $x_a = (x\otimes\iIso_S(a))\circ \rtIso_{T'}^{-1}$.
Further, for the unit map $\unit_{S,T}\colon S\to\ihom{T,S\otimes T}$, we have $\unit_{S,T}(a) = \prescript{}{a}{(\id_T)}$ for every $a\in S$.
\end{prp}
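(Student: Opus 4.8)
The plan is to verify each of the four claimed identities by translating everything through the adjunction isomorphism of \autoref{thm:Cuclosedbijection}, using that two $\CatCu$-morphisms into an internal-hom agree as soon as their adjoint bimorphisms agree on simple tensors. First I would set up notation: write $u$ for the general left unit map $S\otimes\ihom{T',T}\to\ihom{T',S\otimes T}$, so that by \autoref{dfn:genProp:genUnit} the adjoint $\bar u\colon S\otimes\ihom{T',T}\otimes T'\to S\otimes T$ equals $\id_S\otimes\counit_{T',T}$, and hence $\bar u(a\otimes x\otimes b) = a\otimes\counit_{T',T}(x\otimes b) = a\otimes x(b)$, using the evaluation notation of \autoref{dfn:genProp:counitMap}. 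Thus $\sigma_{T',S\otimes T}(\prescript{}{a}{x})(b) = a\otimes x(b)$ for all $b\in T'$; this single formula is what all four expressions must match.

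Next I would compute the right-hand sides. For $\unit_{S,T}(a)\circ x$, apply \autoref{prp:genProp:endpointMapAssociative} and then \autoref{prp:genProp:comp_counit} together with \autoref{prp:genProp:unitMapEndpoint}: $\sigma_{T',S\otimes T}(\unit_{S,T}(a)\circ x)(b) = \sigma_{T,S\otimes T}(\unit_{S,T}(a))\big(\sigma_{T',T}(x)(b)\big) = (a\otimes\freeVar)(x(b)) = a\otimes x(b)$, matching $\bar u$. Since both are $\CatCu$-morphisms $S\otimes\ihom{T',T}\to\ihom{T',S\otimes T}$ whose composites with $\sigma_{T',S\otimes T}$ agree on elements of the form $a\otimes x$ — and such elements generate under suprema of rapidly increasing sequences and addition, while $\sigma$ is injective by \cite[Proposition 4.10]{AntPerThi17arX:AbsBivariantCu} applied as in \autoref{prp:genProp:eval1} (the endpoint map of a \CuSgp\ is an isomorphism onto its image; more precisely one uses that $\sigma_{T',S\otimes T}$ is a split monomorphism, equivalently that distinct classes of paths have distinct endpoint morphisms) — we conclude $\prescript{}{a}{x} = \unit_{S,T}(a)\circ x$. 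For the expression $(\id_S\otimes x)\circ\unit_{S,T'}(a)$, the same two lemmas give $\sigma_{T',S\otimes T}\big((\id_S\otimes x)\circ\unit_{S,T'}(a)\big)(b) = \sigma_{T',S\otimes T}(\id_S\otimes x)\big(\sigma_{T',S\otimes T'}(\unit_{S,T'}(a))(b)\big) = (\id_S\otimes\sigma_{T',T}(x))(a\otimes b) = a\otimes x(b)$, again matching. Finally, for $(\iIso_S(a)\otimes x)\circ\ltIso_{T'}^{-1}$, I would unwind $\iIso_S$ via \autoref{prp:genProp:iIso} (its inverse is $\ev_1\circ\sigma_{\NNbar,S}$) and use \autoref{prp:genProp:tensExt} to describe the external tensor product $\iIso_S(a)\tensExt x$ on paths, then compose with the isomorphism $\ltIso_{T'}\colon\NNbar\otimes T'\to T'$; tracking $1\in\NNbar$ through $\ltIso_{T'}^{-1}$ and evaluating produces $a\otimes x(b)$ once more.

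The last sentence, $\unit_{S,T}(a) = \prescript{}{a}{(\id_T)}$, follows by specializing $T'=T$ and $x = \id_T\in\CatCuMor(T,T)\subseteq\ihom{T,T}$: then $\prescript{}{a}{(\id_T)} = \unit_{S,T}(a)\circ\id_T = \unit_{S,T}(a)$ by the unit law in \autoref{prp:genProp:compAssoc}. The analogous statement for the general right unit map $x_a = (x\otimes\iIso_S(a))\circ\rtIso_{T'}^{-1}$ is proved symmetrically, inserting the flip isomorphism $\sigma$ exactly where \autoref{dfn:genProp:genUnit} does and using that $\counit$ composed with a flip computes $x(b)$ in the swapped tensor factor.

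The main obstacle I anticipate is not any single computation — each is a short diagram chase through \autoref{thm:Cuclosedbijection} — but rather the bookkeeping of \emph{which} instance of $\sigma_{-,-}$, $\counit_{-,-}$, and which flip isomorphism appears at each stage, since the general unit maps are defined by three- and four-fold tensor adjunctions. The cleanest route is to reduce everything to the endpoint-map identity $\sigma_{T',S\otimes T}(\prescript{}{a}{x})(b) = a\otimes x(b)$ and verify the three right-hand candidates satisfy it, invoking injectivity of the endpoint map (\autoref{prp:genProp:eval1} and \cite[Proposition 4.10]{AntPerThi17arX:AbsBivariantCu}) together with the fact that a $\CatCu$-morphism out of $S\otimes\ihom{T',T}$ is determined by its values on simple tensors $a\otimes x$ via \autoref{thm:Cuclosedbijection}; everything else is then a matter of citing \autoref{prp:genProp:endpointMapAssociative}, \autoref{prp:genProp:comp_counit}, \autoref{prp:genProp:unitMapEndpoint}, \autoref{prp:genProp:tensExt}, and \autoref{prp:genProp:iIso} in the right order.
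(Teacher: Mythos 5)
The paper gives no proof of this proposition (it is explicitly left to the reader), so I am judging your argument on its own. Your computations of the endpoints of all four expressions are correct, and reducing everything to the single identity $\sigma_{T',S\otimes T}(\freeVar)(b)=a\otimes x(b)$ is exactly the right reduction. There is, however, one genuinely false assertion at the hinge of your argument: you claim that $\sigma_{T',S\otimes T}$ is injective (``a split monomorphism, equivalently that distinct classes of paths have distinct endpoint morphisms''). This is false in general, and the paper itself warns about it: \autoref{dfn:genProp:counitMap} notes that $x(a)=x'(a)$ for all $a$ does not imply $x=x'$, which by \autoref{prp:genProp:counitMap} is precisely the failure of injectivity of the endpoint map. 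Concretely, $\ihom{\PPbar,\PPbar}\cong M_1=[0,\infty)\sqcup(0,\infty]$ while $\CatCuMor[\PPbar,\PPbar]\cong[0,\infty]$, so the compact and the soft copy of each $t>0$ are distinct classes of paths with the same endpoint; \autoref{exa:bivarCu:ihomSSNotSimple} gives another such example. The result of \cite[Proposition~4.10]{AntPerThi17arX:AbsBivariantCu} that you cite concerns the endpoint map $\tau(S,\ll)\to S$ of a \CuSgp{} with its way-below relation, not the endpoint map of the $\CatQ$-semigroup $(\CatCuMor[T',S\otimes T],\prec)$, which is generally not a \CuSgp{} (\autoref{prp:genProp:eval1} is the special case $S=\NNbar$ where it happens to be one).

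The good news is that the correct principle is already in your opening sentence and makes the false one unnecessary: by \autoref{thm:Cuclosedbijection} the assignment $\Phi\mapsto\big((p,b)\mapsto\sigma_{T',S\otimes T}(\Phi(p))(b)\big)$ is a \emph{bijection} from $\CatCuMor(P,\ihom{T',S\otimes T})$ onto $\CatCuBimor(P\times T',S\otimes T)$, so two \CuMor{s} into the internal hom coincide as soon as their adjoint bimorphisms do; injectivity holds at the level of morphisms, not of individual elements. To run this you must exhibit each of the four expressions as the value at $a\otimes x$ of a globally defined \CuMor{} $S\otimes\ihom{T',T}\to\ihom{T',S\otimes T}$ (the general left unit map itself; the composition product precomposed with $\unit_{S,T}\otimes\id_{\ihom{T',T}}$; and so on), check that the adjoints agree on simple tensors $a\otimes x\otimes b$ --- which is exactly your endpoint computation --- and invoke \autoref{prp:prelim:tensCu} to pass from simple tensors to all of $S\otimes\ihom{T',T}\otimes T'$. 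With that substitution the proof goes through; the final identity $\unit_{S,T}(a)=\prescript{}{a}{(\id_T)}$ via the unit law of \autoref{prp:genProp:compAssoc} is fine as written.
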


%

Finally, similar to $KK$-theory for \ca{s}, we have a general form of the product that simultaneously generalizes the composition product and the external tensor product; see \cite[Section~18.9, p.180f]{Bla98KThy}.

Let $P$, $S_1$, $S_2$, $T_1$ and $T_2$ be \CuSgp{s}.
We let
\[
\tensExt_P\colon \ihom{S_1 \otimes P, T_1} \otimes \ihom{S_2, P \otimes T_2} \to \ihom{S_1 \otimes S_2, T_1 \otimes T_2},
\]
be the \CuMor{} that under the identification
\begin{align*}
&\CatCuMor\big( \ihom{S_1 \otimes P, T_1} \otimes \ihom{S_2, P \otimes T_2}, \ihom{S_1 \otimes S_2, T_1 \otimes T_2} \big) \\
&\quad\cong \CatCuMor\big( \ihom{S_1 \otimes P, T_1} \otimes \ihom{S_2, P \otimes T_2} \otimes S_1 \otimes S_2, T_1\otimes T_2 \big)
\end{align*}
corresponds to the composition
\[
(\counit_{S_1\otimes P,T_1}\otimes\id_{T_2})
\circ (\id_{\ihom{S_1\otimes P,T_1}}\otimes\counit_{S_2,P\otimes T_2})
\circ (\id_{\ihom{S_1\otimes P,T_1}}\otimes\sigma_{\ihom{S_2, P \otimes T_2},S_1}\otimes\id_{S_2}),
\]
where $\sigma_{\ihom{S_2, P \otimes T_2},S_1}$ denotes the flip isomorphism.

Given $x\in\ihom{S_1\otimes P, T_1}$ and $y\in\ihom{S_2, P\otimes T_2}$, we have
\[
x\tensExt_P y = (x\tensExt\id_{T_2}) \circ (\id_{S_1}\tensExt y).
\]

Specializing to the case $P=\NNbar$, we obtain the external tensor product, after applying the usual isomorphisms $S_1\otimes\NNbar\cong S_1$ and $\NNbar\otimes T_2\cong T_2$.

Specializing to the case $T_2=S_1=\NNbar$, we obtain the composition product, after applying the natural isomorphisms $\NNbar\otimes P \cong P \cong P\otimes\NNbar$, $\NNbar\otimes S_2\cong S_2$, and $T_1\otimes\NNbar\cong T_1$.

\section{Bivariant Cuntz semigroups of ideals and quotients}
\label{sec:bivarCu:functoriality}


A \emph{sub-\CuSgp} of a \CuSgp{} $T$ is a submonoid $S\subseteq T$ that is a \CuSgp{} for the partial order inherited from $T$ and such that the inclusion $S\to T$ is a \CuMor.
It is easy to see that $S$ is a sub-\CuSgp{} of $T$ if and only if $S$ is closed under passing to suprema of increasing sequences and if the way-below relation in $S$ and $T$ agree.

\begin{lma}
\label{prp:bivarCu:inclInducesOEmb}
Let $S$ and $T$ be \CuSgp{s}, and let $T'\subseteq T$ be a sub-\CuSgp{}.
Then the inclusion map $\iota\colon T'\to T$ induces an order-embedding $\iota_*\colon \ihom{S,T'} \to \ihom{S,T}$.
\end{lma}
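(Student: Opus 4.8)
The plan is to use the explicit description of the internal-hom via paths. Recall that $\ihom{S,T'} = \tau(\CatCuMor[S,T'],\prec)$ and $\ihom{S,T} = \tau(\CatCuMor[S,T],\prec)$, and that the inclusion $\iota\colon T'\to T$ is a \CuMor{} that preserves suprema of increasing sequences and the way-below relation (since $T'$ is a sub-\CuSgp). Post-composition with $\iota$ sends a generalized \CuMor{} $S\to T'$ to a generalized \CuMor{} $S\to T$, and since $\iota$ preserves the way-below relation, it sends a path $\pathCu{f}=(f_\lambda)_\lambda$ in $(\CatCuMor[S,T'],\prec)$ to a path $(\iota\circ f_\lambda)_\lambda$ in $(\CatCuMor[S,T],\prec)$. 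One checks this descends to the quotient by $\sim$, so the induced map $\iota_*\colon\ihom{S,T'}\to\ihom{S,T}$ is given by $\iota_*([\pathCu{f}])=[(\iota\circ f_\lambda)_\lambda]$; functoriality of $\tau$ (see \autoref{rmk:Qcategory}) already guarantees $\iota_*$ is a well-defined \CuMor. Actually, the cleanest route is to invoke that $\tau$ is a functor $\CatQ\to\CatCu$ and that $\iota_*\colon\CatCuMor[S,T']\to\CatCuMor[S,T]$ is a $\CatQ$-morphism, so $\iota_*=\tau(\iota_*)$ is a \CuMor{} automatically; the only real content is the order-embedding claim.

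So the main work is: given paths $\pathCu{f},\pathCu{g}$ in $\CatCuMor[S,T']$, show that $[(\iota\circ f_\lambda)_\lambda]\leq[(\iota\circ g_\lambda)_\lambda]$ in $\ihom{S,T}$ implies $[\pathCu{f}]\leq[\pathCu{g}]$ in $\ihom{S,T'}$. By definition of the order on $\tau$, the hypothesis says: for every $\lambda\in I_\QQ$ there is $\mu\in I_\QQ$ with $\iota(f_\lambda(a)) \ll \iota(g_\mu(a))$ for... wait, more precisely $\iota\circ f_\lambda \prec \iota\circ g_\mu$ in $\CatCuMor[S,T]$, which means $\iota(f_\lambda(a'))\ll \iota(g_\mu(a))$ in $T$ whenever $a'\ll a$ in $S$. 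We must deduce $f_\lambda\prec g_\mu$ in $\CatCuMor[S,T']$, i.e. $f_\lambda(a')\ll g_\mu(a)$ in $T'$ whenever $a'\ll a$. This is exactly where we use that the way-below relations in $T'$ and $T$ agree: since $f_\lambda(a'),g_\mu(a)\in T'$ and $\iota(f_\lambda(a'))\ll\iota(g_\mu(a))$ in $T$ means $f_\lambda(a')\ll g_\mu(a)$ in $T$, the characterization of sub-\CuSgp{s} gives $f_\lambda(a')\ll g_\mu(a)$ in $T'$. Hence $f_\lambda\prec g_\mu$, so $\pathCu{f}\lesssim\pathCu{g}$ and $[\pathCu{f}]\leq[\pathCu{g}]$.

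I expect the main (minor) obstacle is being careful about the two layers of the way-below/auxiliary relation: the auxiliary relation $\prec$ on $\CatCuMor[S,T]$ is defined pointwise in terms of the way-below relation $\ll$ on $T$, and one must track that $\iota$ preserving $\ll$ on elements translates into $\iota_*$ reflecting $\prec$ on morphisms, not merely preserving it. Once that is phrased correctly the argument is a one-line application of the sub-\CuSgp{} hypothesis. The converse direction (order-preservation, giving that $\iota_*$ is monotone) is immediate since $\iota$ preserves $\ll$ and suprema, and is already subsumed by $\iota_*$ being a \CuMor. Combining the two, $[\pathCu{f}]\leq[\pathCu{g}]$ in $\ihom{S,T'}$ if and only if $\iota_*([\pathCu{f}])\leq\iota_*([\pathCu{g}])$ in $\ihom{S,T}$, which is precisely the statement that $\iota_*$ is an order-embedding.
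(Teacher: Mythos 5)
Your proposal is correct and follows essentially the same route as the paper: describe $\iota_*$ on representing paths as post-composition with $\iota$, then reflect the relation $\prec$ on generalized \CuMor{s} back from $\CatCuMor[S,T]$ to $\CatCuMor[S,T']$ using that the way-below relations of $T'$ and $T$ agree on elements of $T'$ (the defining property of a sub-\CuSgp). The paper's proof is exactly this argument, including the same key observation that $a'\ll a$ in $T'$ if and only if $\iota(a')\ll\iota(a)$ in $T$.
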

\begin{proof}
Let $\pathCu{f}=(f_\lambda)_\lambda$ be a path in $\CatCuMor[S,T']$.
Then $\pathCu{\tilde{f}}:=(\iota\circ f_\lambda)_\lambda$ is a path in $\CatCuMor[S,T]$ and we have $\iota_*([\pathCu{f}])=[\pathCu{\tilde{f}}]$;
see the comments after Remark~5.4 in \cite{AntPerThi17arX:AbsBivariantCu}.
	
To show that $\iota_*$ is an order-embedding, let $x,y\in\ihom{S,T'}$ with $\iota_*(x)\leq\iota_*(y)$.
Choose paths $\pathCu{f}$ and $\pathCu{g}$ in $\CatCuMor[S,T']$ representing $x$ and $y$, respectively.
We have $(\iota\circ f_\lambda)_\lambda \precsim (\iota\circ g_\lambda)_\lambda$.
Thus, for every $\lambda\in I_\QQ$, there is $\mu\in I_\QQ$ such that $\iota\circ f_\lambda \prec \iota\circ g_\mu$.
Using that $T'\subseteq T$ is a sub-\CuSgp{}, for such $\lambda$ and $\mu$ we deduce that $f_\lambda \prec g_\mu$.
(We use that for $a',a\in T'$ we have $a'\ll a$ in $T'$ if and only if $\iota(a')\ll\iota(a)$ in $T$.)
It follows that $\pathCu{f}\precsim\pathCu{g}$, and hence $x\leq y$, as desired.
\end{proof}

Recall that an \emph{ideal} of a \CuSgp{} $S$ is a submonoid $J\subseteq S$ that is closed under passing to suprema of increasing sequences and that is downward-hereditary.
Every ideal is in particular a sub-\CuSgp{}.
(See \cite[Section~5.1]{AntPerThi18:TensorProdCu} for an account on ideals and quotients.)

\begin{prp}
\label{prp:bivarCu:idealSecondEntry}
Let $S$ and $T$ be \CuSgp{s}, and let $J$ be an ideal of $T$.
Let $\iota\colon J\to T$ denote the inclusion map.
Then the induced \CuMor{} $\iota_*\colon\ihom{S,J}\to\ihom{S,T}$ is an order-embedding that identifies $\ihom{S,J}$ with an ideal of $\ihom{S,T}$.
Moreover, $x\in\ihom{S,T}$ belongs to $\ihom{S,J}$ if and only if for some (equivalently, for every) path $(f_\lambda)_\lambda$ representing $x$, each $f_\lambda$ takes image in $J$.
\end{prp}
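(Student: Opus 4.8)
The plan is to verify the three claims of the proposition in the following order: first that $\iota_*$ is an order-embedding, then the characterization of which elements of $\ihom{S,T}$ lie in the image of $\iota_*$, and finally that this image is an ideal.

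Since $J$ is an ideal, it is in particular a sub-\CuSgp{} of $T$, so \autoref{prp:bivarCu:inclInducesOEmb} immediately gives that $\iota_*\colon\ihom{S,J}\to\ihom{S,T}$ is an order-embedding, and moreover that $\iota_*([\pathCu{f}])=[(\iota\circ f_\lambda)_\lambda]$ for any path $\pathCu{f}$ in $\CatCuMor[S,J]$. For the characterization, the nontrivial direction is: if $x\in\ihom{S,T}$ is represented by \emph{some} path $(f_\lambda)_\lambda$ with each $f_\lambda$ taking image in $J$, then in fact \emph{every} path representing $x$ has this property, up to reparametrization in the relevant sense. Here I would argue as follows. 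Suppose $x=[\pathCu{f}]=[\pathCu{g}]$ with $f_\lambda(S)\subseteq J$ for all $\lambda$, and let $g_\lambda$ be an arbitrary representative. From $\pathCu{g}\precsim\pathCu{f}$ we get, for each $\lambda$, some $\mu$ with $g_\lambda\prec f_\mu$, meaning $g_\lambda(a')\ll f_\mu(a)$ in $T$ whenever $a'\ll a$. Fixing $a\in S$ and taking a path $(a_n)_n$ increasing to $a$ with $a_n\ll a_{n+1}$, we get $g_\lambda(a_n)\ll f_\mu(a_{n+1})\in J$, and since $J$ is downward-hereditary, $g_\lambda(a_n)\in J$; passing to the supremum over $n$ and using that $J$ is closed under suprema of increasing sequences, $g_\lambda(a)\in J$. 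Thus $g_\lambda$ takes image in $J$, as desired. The equivalence with "for some path" is then the trivial direction, and the statement that $x\in\ihom{S,J}$ (i.e., lies in the image of $\iota_*$) follows because a path $(f_\lambda)_\lambda$ in $\CatCuMor[S,T]$ with image in $J$ restricts to a path in $\CatCuMor[J\text{-valued maps}]$ — here one checks that the relation $\prec$ and suprema computed in $J$ agree with those computed in $T$, which is exactly the sub-\CuSgp{} property; so $[\pathCu{f}]$ is the image under $\iota_*$ of the corresponding class in $\ihom{S,J}$.

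Finally, to see that $\iota_*(\ihom{S,J})$ is an ideal of $\ihom{S,T}$, I must check it is a submonoid (clear: the zero path is $J$-valued, and a sum of two $J$-valued paths is $J$-valued since $J$ is a submonoid), that it is downward-hereditary, and that it is closed under suprema of increasing sequences. For downward-heredity, suppose $y\leq x$ with $x\in\iota_*(\ihom{S,J})$; choose a $J$-valued path $\pathCu{f}$ for $x$ and any path $\pathCu{g}$ for $y$. From $\pathCu{g}\precsim\pathCu{f}$ and the argument of the previous paragraph, each $g_\lambda$ is $J$-valued, so $y\in\iota_*(\ihom{S,J})$. For closure under suprema, let $x_1\leq x_2\leq\cdots$ be an increasing sequence in $\iota_*(\ihom{S,J})$ with supremum $x$ in $\ihom{S,T}$; I would invoke the standard description of suprema in the $\tau$-construction (suprema of increasing sequences of path-classes are represented by a suitable "diagonal" path), observe that such a diagonal path can be taken to have each term dominated — in the $\prec$ sense — by a term of one of the $J$-valued representatives, and then again apply the downward argument to conclude each term of the diagonal path is $J$-valued. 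The main obstacle is this last point: one must be careful about exactly how suprema are computed in $\ihom{S,T}=\tau(\CatCuMor[S,T],\prec)$, and confirm that the diagonal-path representative of the supremum inherits $J$-valuedness; this is where the precise bookkeeping with the $\prec$-relation on $\CatCuMor[S,T]$ and the definition of $\precsim$ on paths has to be done carefully, but it reduces, in the end, to the same downward-hereditary-plus-closed-under-suprema argument for $J$ inside $T$ that was used above.
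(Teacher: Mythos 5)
Your argument for the order-embedding, for the characterization of membership via $J$-valued paths, and for downward-heredity is correct and is essentially the paper's proof: in each case one passes between two $\precsim$-comparable paths, uses $f_\lambda\prec g_\mu$ to dominate values by elements of $J$, invokes that $J$ is downward-hereditary, and then takes suprema inside $J$. The one place where you diverge --- and where you flag ``the main obstacle'' --- is closure of the image under suprema of increasing sequences, for which you propose a diagonal-path argument inside the $\tau$-construction. That argument could be pushed through, but it is unnecessary, and leaving it as a sketch is the only real gap in your write-up. The point is that $\iota_*$ is not just an order-embedding but a \CuMor{}, hence preserves suprema of increasing sequences; so if $(x_n)_n$ is increasing in $\iota_*(\ihom{S,J})$ with $x_n=\iota_*(y_n)$, the order-embedding property forces $(y_n)_n$ to be increasing in $\ihom{S,J}$, and then
\[
\sup_n x_n = \sup_n \iota_*(y_n) = \iota_*\bigl(\sup_n y_n\bigr)
\]
lies in the image. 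This is exactly how the paper disposes of sup-closure in one line immediately after citing \autoref{prp:bivarCu:inclInducesOEmb}, reserving the path-level arguments for the membership criterion and downward-heredity, where they are genuinely needed. I recommend replacing your diagonal-path sketch with this observation; everything else stands as written.
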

\begin{proof}
By \autoref{prp:bivarCu:inclInducesOEmb}, $\iota_*$ is an order-embedding.
Hence, $\iota_*$ identifies $\ihom{S,J}$ with a submonoid of $\ihom{S,T}$ that is closed under passing to suprema of increasing sequences.

Let $x\in\ihom{S,T}$ be represented by a path $\pathCu{f}=(f_\lambda)_\lambda$ in $\CatCu[S,T]$.
If each $f_\lambda$ takes values in $J$, then we can consider $\pathCu{f}$ as a path in $\CatCu[S,J]$ whose class is an element $x'\in\ihom{S,J}$ satisfying $\iota_*(x')=x$.
Conversely, assume that $x$ belongs to $\ihom{S,J}$.
Then there is a path $\pathCu{g}=(g_\mu)_\mu$ in $\CatCuMor[S,J]$ with $\iota_*([\pathCu{g}])=x$.
Let $\lambda\in I_\QQ$.
Since $\pathCu{f}\precsim\pathCu{g}$, we can choose $\mu\in I_\QQ$ with $f_\lambda\prec g_\mu$.
Since $g_\mu$ takes values in $J$, and since $J$ is downward-hereditary, it follows that $f_\lambda$ takes values in $J$, as desired.

A similar argument shows that $\ihom{S,J}$ is downward-hereditary in $\ihom{S,T}$.
\end{proof}

\begin{pgr}
\label{pgr:bivarCu:exactSecondEntry}
Given $S$, let us study whether the functor $\ihom{S,\freeVar}\colon\CatCu\to\CatCu$ is exact.
More precisely, let $J\lhd T$ be an ideal, with inclusion map $\iota\colon J\to T$ and with quotient map $\pi\colon T\to T/J$.
This induces the following \CuMor{s}:
\[
\ihom{S,J} \xrightarrow{\iota_*} \ihom{S,T} \xrightarrow{\pi_*} \ihom{S,T/J}.
\]
By \autoref{prp:bivarCu:idealSecondEntry}, $\iota_*$ identifies $\ihom{S,J}$ with an ideal in $\ihom{S,T}$. 
Since $\pi\circ\iota$ is the zero map, so is $\pi_*\circ\iota_*$.
Thus, $\pi_*$ vanishes on the ideal $\ihom{S,J}\lhd\ihom{S,T}$.
It follows that $\pi_*$ induces a \CuMor{}
\[
\widehat{\pi_*}\colon\ihom{S,T}/\ihom{S,J} \to \ihom{S,T/J}.
\]
\end{pgr}

\begin{pbm}
\label{pbm:bivarCu:exactSecondEntry}
Study the order-theoretic properties of the \CuMor{} $\widehat{\pi_*}$ from \autoref{pgr:bivarCu:exactSecondEntry}.
In particular, when is $\widehat{\pi_*}$ an order-embedding, when is it surjective?
\end{pbm}

We are currently not aware of any example for $S$ and $J\lhd T$ such that the map $\widehat{\pi_*}\colon\ihom{S,T}/\ihom{S,J} \to \ihom{S,T/J}$ is not an isomorphism.

The following result and its proof are analogous to \autoref{prp:bivarCu:idealSecondEntry}.

\begin{prp}
\label{prp:bivarCu:idealFirstEntry}
Let $S$ and $T$ be \CuSgp{s}, let $J\lhd S$, and let $\pi\colon S\to S/J$ denote the quotient map.
Then the induced \CuMor{} $\pi^*\colon\ihom{S/J,T}\to\ihom{S,T}$ is an order-embedding that identifies $\ihom{S/J,T}$ with an ideal in $\ihom{S,T}$.
Moreover, $x\in\ihom{S,T}$ belongs to $\ihom{S/J,T}$ if and only if for some (equivalently, for every) path $(f_\lambda)_\lambda$ representing $x$, each $f_\lambda$ vanishes on $J$.
\end{prp}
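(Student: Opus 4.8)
The plan is to follow the proof of \autoref{prp:bivarCu:idealSecondEntry} almost verbatim, with one modification: the place where that proof uses that $T'$ is a sub-\CuSgp{} of $T$ (so that the way-below relations of $T'$ and $T$ agree) is here replaced by the following lifting property of the quotient map $\pi\colon S\to S/J$: \emph{whenever $\bar a'\ll\bar a$ in $S/J$ there exist $a',a\in S$ with $a'\ll a$, $\bar a'\leq\pi(a')$, and $\pi(a)\leq\bar a$}. I would prove this first; it is short. Choose $c\in S$ with $\pi(c)=\bar a$, use \axiomO{2} to write $c=\sup_n c_n$ with $c_n\ll c_{n+1}$, note $\bar a=\sup_n\pi(c_n)$ since $\pi$ preserves suprema of increasing sequences, pick $n$ with $\bar a'\leq\pi(c_n)$, and set $a':=c_n$ and $a:=c_{n+1}$.

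With this in hand, recall that $\pi^*$ sends the class of a path $\pathCu{g}=(g_\lambda)_\lambda$ in $\CatCuMor[S/J,T]$ to the class of $(g_\lambda\circ\pi)_\lambda$. To see that $\pi^*$ is an order-embedding, suppose $\pi^*([\pathCu{f}])\leq\pi^*([\pathCu{g}])$; then for each $\lambda$ there is $\mu$ with $f_\lambda\circ\pi\prec g_\mu\circ\pi$, and I would upgrade this to $f_\lambda\prec g_\mu$ using the lifting property: given $\bar a'\ll\bar a$ in $S/J$, lift to $a'\ll a$ as above and compute $f_\lambda(\bar a')\leq f_\lambda(\pi(a'))\ll g_\mu(\pi(a))\leq g_\mu(\bar a)$. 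Hence $\pathCu{f}\precsim\pathCu{g}$ and $[\pathCu{f}]\leq[\pathCu{g}]$. Being an injective \CuMor{}, $\pi^*$ then identifies $\ihom{S/J,T}$ with a submonoid of $\ihom{S,T}$ that is closed under suprema of increasing sequences. For the description of the image: if $x=\pi^*([\pathCu{g}])$ then $x=[(g_\lambda\circ\pi)_\lambda]$ and each $g_\lambda\circ\pi$ vanishes on $J$ because $\pi$ does; conversely, if $x=[\pathCu{f}]$ with each $f_\lambda$ vanishing on $J$, I would factor each $f_\lambda$ as $\bar f_\lambda\circ\pi$ via the universal property of the quotient (\cite[Section~5.1]{AntPerThi18:TensorProdCu}, with the same argument for generalized \CuMor{s}), use the lifting property to check that $(\bar f_\lambda)_\lambda$ is again a path, and observe that $\pi^*$ maps its class to $x$. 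The equivalence between "some" and "every" path representative follows because two paths representing $x$ are $\precsim$-equivalent, so each component of one path is $\prec$-below, hence pointwise $\leq$, some component of the other; thus if all components of one path vanish on $J$, so do all components of the other.

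Finally, to see that the image is downward-hereditary, I would take $y\leq x$ in $\ihom{S,T}$ with $x$ in the image, pick representing paths $\pathCu{g}$ of $y$ and $\pathCu{f}$ of $x$ with all $f_\lambda$ vanishing on $J$, and use $\pathCu{g}\precsim\pathCu{f}$ to get, for each $\lambda$, some $\mu$ with $g_\lambda\prec f_\mu$, whence $g_\lambda\leq f_\mu$ vanishes on $J$; by the description of the image, $y$ then lies in it. This shows that $\pi^*$ identifies $\ihom{S/J,T}$ with an ideal of $\ihom{S,T}$. I expect the only genuinely new point relative to \autoref{prp:bivarCu:idealSecondEntry} to be the lifting property of $\pi$ and, correspondingly, verifying that a path in $\CatCuMor[S,T]$ whose components all kill $J$ descends to a path in $\CatCuMor[S/J,T]$; once that is settled the rest is the same bookkeeping.
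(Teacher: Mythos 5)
Your proposal is correct and follows exactly the route the paper intends: the paper omits the proof, stating only that it is analogous to \autoref{prp:bivarCu:idealSecondEntry}, and you carry out that analogy faithfully. The one genuinely new ingredient you identify --- the $\ll$-lifting property of the quotient map, proved via \axiomO{2} and the fact that $\pi$ is a surjective \CuMor{} --- is exactly what is needed to replace the agreement of way-below relations used in the sub-\CuSgp{} case, and your verification of it is correct.
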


\begin{pgr}
\label{pgr:bivarCu:LatIhomST}
By Propositions~\ref{prp:bivarCu:idealSecondEntry} and~\ref{prp:bivarCu:idealFirstEntry}, ideals in $S$ and $T$ naturally induce ideals in $\ihom{S,T}$.
More precisely, if $J\lhd S$ and $K\lhd T$, then we can identify $\ihom{S/J,K}$ with an ideal in $\ihom{S,T}$.
Let $\Lat(P)$ denote the ideal lattice of a $\CatCu$-semigroup $P$.
We obtain a natural map
\[
\Lat(S)^{\mathrm{op}}\times\Lat(T)\to\Lat(\ihom{S,T}).
\]

However, this map need not be injective.
For example, consider $S=Z$ and $T=\NNbar\oplus Z$ with the ideal $J=0\oplus Z$.
Note that every generalized \CuMor{} $Z\to\NNbar\oplus Z$ necessarily takes values in the ideal $0\oplus Z$.
It follows that in this case $\ihom{S,J}=\ihom{S,T}$.

The following example shows that the above map is also not surjective in general.
In fact, the example shows that there exists a simple \CuSgp{} $S$ such that $\ihom{S,S}$ is not simple.
\end{pgr}

\begin{exa}
\label{exa:bivarCu:ihomSSNotSimple}
Let $S:=[0,1]\cup\{\infty\}$, considered with order and addition as a subset of $\PPbar$, with the convention that $a+b=\infty$ whenever $a+b>1$ in $\PPbar$.
It is easy to check that $S$ is a simple \CuSgp{}.

Given $t\in\{0\}\cup[1,\infty]$, let $\varphi_t\colon S\to S$ be the map given by $\varphi_t(a):=ta$, where $ta$ is given by the usual multiplication in $\PPbar$ applying the above convention that an element is $\infty$ as soon as it is larger than $1$.
Then $\varphi_t$ is a generalized \CuMor.
One can show that every generalized \CuMor{} $S\to S$ is of this form.
Hence, $\CatCuMor[S,S]$ is isomorphic to $\{0\}\cup[1,\infty]$, identifying $\prec$ with $\leq$.
It follows that
\[
\ihom{S,S}
= \tau\big( \CatCuMor[S,S],\prec \big)
\cong \tau\big( \{0\}\cup[1,\infty],\leq \big)
\cong \{0\}\sqcup[1,\infty] \sqcup (1,\infty],
\]
which is a disjoint union of compact elements corresponding to $\{0\}\cup[1,\infty]$ and nonzero soft elements corresponding to $(1,\infty]$.
(Similar to the decomposition of $Z$ and $R_q$.)
In particular, $\ihom{S,S}$ contains a compact infinite element $\infty$, and a noncompact infinite element $\infty'$.
The set $J:=\{x : x\leq\infty'\}$ is an ideal in $\ihom{S,S}$.
We have $\infty\notin J$, which shows that $\ihom{S,S}$ is not simple.
\end{exa}

\begin{pbm}
Characterize when $\ihom{S,T}$ is simple.
In particular, given simple \CuSgp{s} $S$ and $T$, give necessary and sufficient criteria for $\ihom{S,T}$ to be simple.
\end{pbm}

\section{\texorpdfstring{$\CatCu$}{Cu}-semirings and \texorpdfstring{$\CatCu$}{Cu}-semimodules}
\label{sec:ring}

A (unital) \emph{\CuSrg{}} is a \CuSgp{} $R$ together with a $\CatCu$-bimorphism $R\times R\to R$, denoted by $(r_1,r_2)\mapsto r_1r_2$, and a distinguished element $1\in R$, called the unit of $R$, such that $r_1(r_2r_3)=(r_1r_2)r_3$ and $r1=r=1r$ for all $r, r_1, r_2, r_3\in R$.
This concept was introduced and studied in \cite[Chapter~7]{AntPerThi18:TensorProdCu}, where it is further assumed that the product is commutative.
We will not make this assumption here.
We let $\mu_R\colon R\otimes R\to R$ denote the \CuMor{} induced by multiplication in $R$.

The following result follows from the general properties of the composition product for the internal-hom (see \cite[Section 1.6]{Kel05EnrichedCat}).
 
\begin{prp}
\label{prp:ring:ihomSS}
Let $S$ be a \CuSgp{}.
Then $\ihom{S,S}$ is a \CuSrg{} with product given by the composition product $\circ\colon\ihom{S,S}\otimes\ihom{S,S}\to\ihom{S,S}$, and with unit element given by the identity map $\id_S\in\ihom{S,S}$.
\end{prp}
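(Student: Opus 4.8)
The plan is to verify that $\ihom{S,S}$, equipped with the composition product $\circ\colon\ihom{S,S}\otimes\ihom{S,S}\to\ihom{S,S}$ of \autoref{dfn:genProp:composition} and the distinguished element $\id_S$, satisfies the defining axioms of a unital \CuSrg{}. Three things must be checked: that $\circ$ is a genuine $\CatCu$-bimorphism $\ihom{S,S}\times\ihom{S,S}\to\ihom{S,S}$ (equivalently, is induced by one via \autoref{prp:prelim:tensCu}); that it is associative; and that $\id_S$ is a two-sided unit.

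First I would note that the bimorphism requirement is automatic: by construction in \autoref{dfn:genProp:composition} the composition product is defined as a \CuMor{} out of the tensor product $\ihom{S,S}\otimes\ihom{S,S}$, and precomposing with the canonical bimorphism $\omega\colon\ihom{S,S}\times\ihom{S,S}\to\ihom{S,S}\otimes\ihom{S,S}$ from \autoref{prp:prelim:tensCu} yields a $\CatCu$-bimorphism whose induced \CuMor{} is exactly $\circ$. So the multiplication map $\mu_{\ihom{S,S}}$ in the sense of \autoref{sec:ring} is precisely the composition product. Next, associativity and the unit laws are exactly the content of \autoref{prp:genProp:compAssoc}, specialized to $S=T=P=Q$: there it is recorded that $(z\circ y)\circ x=z\circ(y\circ x)$ for $x,y,z\in\ihom{S,S}$, and that $\id_S\circ x=x=x\circ\id_S$ for all $x\in\ihom{S,S}$. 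One should also observe that $\id_S$, viewed as a compact element of $\ihom{S,S}$ via the embedding $\CatCuMor(S,S)\subseteq\ihom{S,S}$ from \cite[Proposition~5.11]{AntPerThi17arX:AbsBivariantCu}, is a legitimate element of the semiring.

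Since all the ingredients are already in place, the proof is essentially a bookkeeping exercise: the statement follows by combining \autoref{dfn:genProp:composition}, \autoref{prp:prelim:tensCu}, and \autoref{prp:genProp:compAssoc}. The only mildly delicate point—and the one I would present most carefully—is the bilinearity, i.e. making explicit that the map underlying $\mu_{\ihom{S,S}}$ in the axioms for a \CuSrg{} is literally the $\CatCu$-morphism $\circ$ defined categorically, so that the associativity and unit identities phrased in terms of elements $y\circ x$ translate directly into the identities $r_1(r_2r_3)=(r_1r_2)r_3$ and $r1=r=1r$. As the excerpt already states, this is an instance of the general fact that in any closed category the internal-hom objects $\ihom{S,S}$ carry the structure of a monoid object for $\circ$ (see \cite[Section~1.6]{Kel05EnrichedCat}); in our setting a monoid object in $(\CatCu,\otimes,\NNbar)$ is by definition a unital \CuSrg{}, which completes the argument. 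There is no real obstacle; the work was done in \autoref{prp:genProp:compAssoc}.
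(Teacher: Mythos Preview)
Your proposal is correct and matches the paper's approach: the paper gives no detailed proof but simply states that the result follows from the general properties of the composition product for the internal-hom in a closed category, citing \cite[Section~1.6]{Kel05EnrichedCat}, together with \autoref{prp:genProp:compAssoc}. You have made explicit exactly the ingredients the paper leaves implicit (the bimorphism structure from \autoref{prp:prelim:tensCu} and the associativity/unit laws from \autoref{prp:genProp:compAssoc}), so there is nothing to add.
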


\begin{rmk}
\label{rmk:ring:ihomSS}
Let $S$ be a \CuSgp.
The identity map $\id_S\colon S\to S$ is a \CuMor.
Therefore, the unit of the \CuSrg{} $\ihom{S,S}$ is compact.

In \autoref{exa:ring:ihomSS_noncomm}, we will see that $\ihom{S,S}$ is noncommutative in general.
\end{rmk}

Given a \CuSrg{} $R$, a \emph{left $\CatCu$-semimodule} over $R$ is a \CuSgp{} $S$ together with a $\CatCu$-bimorphim $R\times S\to S$, denoted by $(r,a)\mapsto ra$, such that for all $r_1,r_2\in R$ and $a\in S$, we have $(r_1r_2)a=r_1(r_2a)$ and $1a=a$.
We also say that $S$ has a \emph{left action} of $R$ if $S$ is a left $\CatCu$-semimodule over $R$.
Right $\CatCu$-semimodules are defined analogously.
If $R_1$ and $R_2$ are \CuSrg{s}, we say that a \CuSgp{} $S$ is a \emph{$(R_1,R_2)$-$\CatCu$-semibimodule} if it has a left $R_1$-action and a right $R_2$-action that satisfy $r_1(ar_2)=(r_1a)r_2$ for all $r_1\in R_1$, $r_2\in R_2$ and $a\in S$.

We refer the reader to \cite[Chapter~7]{AntPerThi18:TensorProdCu} for a discussion on commutative $\CatCu$-semirings and their $\CatCu$-semimodules.

\begin{prp}
\label{prp:ring:ihomSS_act_S}
Let $S$ be a \CuSgp{}.
Then $\counit_{S,S}\colon\ihom{S,S}\otimes S\to S$ defines a left action of $\ihom{S,S}$ on $S$.
\end{prp}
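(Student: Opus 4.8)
The plan is to verify the two semimodule axioms for the map $(x,a)\mapsto\counit_{S,S}(x\otimes a)=x(a)$, where the notation $x(a)$ is that of \autoref{dfn:genProp:counitMap} and \autoref{prp:genProp:counitMap}. First I would note that $\counit_{S,S}$ is a $\CatCu$-morphism $\ihom{S,S}\otimes S\to S$ by \autoref{dfn:genProp:counitMap}, hence the composite $\ihom{S,S}\times S\to\ihom{S,S}\otimes S\xrightarrow{\counit_{S,S}}S$ is a $\CatCu$-bimorphism; this takes care of the biadditivity, order-compatibility, and supremum/way-below conditions required of a left action, so the only thing left to check is the algebraic identities $(yx)a=y(xa)$ and $\id_S\cdot a=a$ for $x,y\in\ihom{S,S}$ and $a\in S$, where $yx$ denotes the composition product $y\circ x$ from \autoref{dfn:genProp:composition}, which is the $\CatCu$-semiring multiplication of $\ihom{S,S}$ by \autoref{prp:ring:ihomSS}.

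The identity $\id_S(a)=a$ is exactly the last assertion of \autoref{prp:genProp:comp_counit}, so I would just cite that. For the associativity identity, the key observation is that $(y\circ x)(a)=y(x(a))$ is precisely the content of \autoref{prp:genProp:comp_counit} applied with $P=T=S$: taking $x,y\in\ihom{S,S}$ and $a\in S$, that proposition gives $(y\circ x)(a)=y(x(a))$, which unwinds under the $x(a):=\counit_{S,S}(x\otimes a)$ notation to the statement that the composition product acts on $S$ compatibly with the evaluation, i.e. $(yx)a=y(xa)$. So both module axioms reduce directly to results already established in \autoref{sec:genProp}.

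There is essentially no obstacle here: the entire statement is a bookkeeping consequence of \autoref{prp:genProp:comp_counit} together with the fact that $\counit_{S,S}$ is a $\CatCu$-morphism and hence induces a $\CatCu$-bimorphism on the product. If one wanted a fully self-contained argument rather than a citation, the only mildly delicate point would be re-deriving $(y\circ x)(a)=y(x(a))$, which in turn comes from \autoref{prp:genProp:endpointMapAssociative} ($\sigma_{S,S}(y\circ x)=\sigma_{S,S}(y)\circ\sigma_{S,S}(x)$) combined with \autoref{prp:genProp:counitMap} ($\counit_{S,S}(z\otimes a)=\sigma_{S,S}(z)(a)$); representing $x$ and $y$ by paths $(f_\lambda)_\lambda$ and $(g_\lambda)_\lambda$, one computes $(y\circ x)(a)=\sigma_{S,S}(y\circ x)(a)=\big(\sigma_{S,S}(y)\circ\sigma_{S,S}(x)\big)(a)=\sigma_{S,S}(y)\big(\sigma_{S,S}(x)(a)\big)=y(x(a))$. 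I would present the proof in the short form: invoke that $\counit_{S,S}$ composed with the canonical bimorphism $\ihom{S,S}\times S\to\ihom{S,S}\otimes S$ is a $\CatCu$-bimorphism, and then cite \autoref{prp:genProp:comp_counit} for the two remaining identities.
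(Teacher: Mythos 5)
Your proposal is correct and matches the paper's own argument, which likewise deduces both semimodule identities directly from \autoref{prp:genProp:comp_counit}. The additional remark that $\counit_{S,S}$ precomposed with the universal bimorphism yields the required $\CatCu$-bimorphism is left implicit in the paper but is exactly the right justification.
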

\begin{proof}
It follows directly from \autoref{prp:genProp:comp_counit} that the action of $\ihom{S,S}$ on $S$ is associative and that $\id_S$ acts as a unit.
\end{proof}


\begin{pgr}
\label{pgr:ring:inducedLtAction}
Let $R$ be a \CuSrg{}, let $S$ be a \CuSgp{}, and let $T$ be a \CuSgp{} with a left $R$-action $\alpha\colon R\otimes T\to T$.
Consider the general left unit map $R\otimes\ihom{S,T}\to\ihom{S,R\otimes T}$ from \autoref{dfn:genProp:genUnit}.
Postcomposing with $\alpha_\ast\colon\ihom{S,R\otimes T}\to\ihom{S,T}$ we obtain a \CuMor{} that we denote by $\alpha_S$:
\[
\alpha_S\colon R\otimes\ihom{S,T} \to \ihom{S, R\otimes T} \xrightarrow{\alpha_*} \ihom{S,T}.
\]

Let $r\in R$ and $x\in\ihom{S,T}$.
We denote $\alpha_S(r\otimes x)$ by $rx$.
Choose a path $\pathCu{f}=(f_\lambda)_\lambda$ in $\CatCuMor[S,T]$ representing $x$, and pick a path $(r_\lambda)_\lambda$ in $R$ with endpoint $r$.
For each $\lambda$, let $r_\lambda f_\lambda\colon S\to T$ be given by $s\mapsto r_\lambda f_\lambda(s)$.
Then $(r_\lambda f_\lambda)_\lambda$ is a path in $\CatCuMor[S,T]$ and
\[
rx
= [(r_\lambda f_\lambda)_\lambda].
\]
\end{pgr}

\begin{prp}
\label{prp:ring:inducedLtAction}
Let $R$ be a \CuSrg{} with compact unit, let $S$ be a \CuSgp{}, and let $T$ be a \CuSgp{} with a left $R$-action $\alpha\colon R\otimes T\to T$.
Then the map $\alpha_S\colon R\otimes\ihom{S,T}\to\ihom{S,T}$ from \autoref{pgr:ring:inducedLtAction} defines a left $R$-action on $\ihom{S,T}$.
\end{prp}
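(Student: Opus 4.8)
The plan is to verify the two semimodule axioms for the map $\alpha_S\colon R\otimes\ihom{S,T}\to\ihom{S,T}$, namely associativity $(r_1r_2)x = r_1(r_2x)$ and unitality $1\cdot x = x$, working with the concrete path-description of $\alpha_S$ given in \autoref{pgr:ring:inducedLtAction}. Recall that if $x = [\pathCu{f}] = [(f_\lambda)_\lambda]$ and $r\in R$ is the endpoint of a path $(r_\lambda)_\lambda$ in $R$, then $rx = [(r_\lambda f_\lambda)_\lambda]$, where $(r_\lambda f_\lambda)(s) = r_\lambda f_\lambda(s)$ and the product on the right is the $R$-action of $R$ on $T$. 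Since $\alpha_S$ is already known to be a \CuMor, it suffices to check these identities on elements of the form $r\otimes x$, and by the universal property of the tensor product (\autoref{prp:prelim:tensCu}) it suffices to check that the two resulting \CuMor{s} $R\otimes R\otimes\ihom{S,T}\to\ihom{S,T}$ (respectively the relevant maps for the unit) agree, which again reduces to agreement on simple tensors.

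For associativity, I would take a path $(f_\lambda)_\lambda$ representing $x$ and paths $(r_\lambda)_\lambda$, $(s_\lambda)_\lambda$ in $R$ with endpoints $r_1$, $r_2$ respectively. Then $r_2 x = [(s_\lambda f_\lambda)_\lambda]$, and applying the formula again, $r_1(r_2 x) = [(r_\lambda(s_\lambda f_\lambda))_\lambda]$. On the other hand, using that $R$ is a \CuSrg, one checks that $(r_\lambda s_\lambda)_\lambda$ is a path in $R$ with endpoint $r_1 r_2$ (this uses \axiomO{3} and \axiomO{4} together with the fact that multiplication in $R$ is a $\CatCu$-bimorphism), so $(r_1 r_2)x = [((r_\lambda s_\lambda) f_\lambda)_\lambda]$. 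The two paths agree pointwise because the action of $R$ on $T$ satisfies $(r_\lambda s_\lambda)(f_\lambda(s)) = r_\lambda(s_\lambda(f_\lambda(s)))$ for every $s\in S$, this being precisely the associativity axiom for the left $R$-action $\alpha$ on $T$. Hence the two classes coincide.

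For unitality, here is where the hypothesis that the unit $1\in R$ is compact is used: since $1\ll 1$, the constant path $(1)_{\lambda\in I_\QQ}$ is a path in $R$ with endpoint $1$ (this is exactly the point where compactness of $1$ is needed; for a non-compact unit the constant family is not a path). Taking this path together with a path $(f_\lambda)_\lambda$ representing $x$, we get $1\cdot x = [(1\cdot f_\lambda)_\lambda]$, and $1\cdot f_\lambda(s) = f_\lambda(s)$ for all $s$ by unitality of the $R$-action on $T$, so $1\cdot f_\lambda = f_\lambda$ and $1\cdot x = [(f_\lambda)_\lambda] = x$. The main obstacle is the bookkeeping needed to reduce the semimodule identities — which a priori are statements about the $\CatCu$-morphism $\alpha_S$ on the tensor product $R\otimes\ihom{S,T}$ — to the pointwise path-level computations above; once one has the concrete description from \autoref{pgr:ring:inducedLtAction} in hand, and verifies that the relevant families of generalized \CuMor{s} are genuinely paths (repeatedly using \axiomO{3}, that composition/multiplication preserves $\prec$, which was essentially done in \autoref{pgr:ring:inducedLtAction}), the identities follow from the corresponding identities for the $R$-action on $T$.
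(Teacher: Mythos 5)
Your proposal is correct and follows essentially the same route as the paper: both verify $(r_1r_2)x=r_1(r_2x)$ by multiplying the representing paths in $R$ and invoking associativity of the $R$-action on $T$, and both use compactness of the unit to make the constant path at $1$ available for the unitality check. The detour through the universal property of the tensor product is unnecessary (the semimodule axioms are stated for the bimorphism, so pointwise verification suffices) but harmless.
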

\begin{proof}
Let $r,r'\in R$ and $x\in\ihom{S,T}$.
Choose a path $\pathCu{f}=(f_\lambda)_\lambda$ in $\CatCuMor[S,T]$ representing $x$.
Choose paths $(r_\lambda)_\lambda$ and $(r'_\lambda)_\lambda$ in $R$ with endpoints $r$ and $r'$, respectively.
Then $(r_\lambda r'_\lambda)_\lambda$ is a path in $R$ with endpoint $rr'$.
Using the description of the $R$-action on $\ihom{S,T}$ from the end of \autoref{pgr:ring:inducedLtAction}, we deduce that
\[
(rr')x
= [((r_\lambda r'_\lambda) f_\lambda)_\lambda]
= [(r_\lambda (r'_\lambda f_\lambda))_\lambda]
= r[(r'_\lambda f_\lambda)_\lambda]
= r(r'x).
\]

Let $1$ denote the unit element of $R$.
For every $f\in\CatCuMor[S,T]$, we have $1f=f$.
Since $1$ is compact, the constant function with value $1$ is a path in $R$ with endpoint $1$.
It follows that
\[
1x
= [(1 f_\lambda)_\lambda]
= [(f_\lambda)_\lambda]
= x. \qedhere
\]
\end{proof}

\begin{prp}
\label{prp:ring:ihomST}
Let $S$ and $T$ be \CuSgp{s}.
Then the composition product $\circ\colon\ihom{T,T}\otimes\ihom{S,T}\to\ihom{S,T}$ defines a left action of the \CuSrg{} $\ihom{T,T}$ on $\ihom{S,T}$.
Analogously, we obtain a right action of $\ihom{S,S}$ on $\ihom{S,T}$.
These actions are compatible and thus $\ihom{S,T}$ is a $(\ihom{T,T},\ihom{S,S})$-$\CatCu$-semibimodule.
\end{prp}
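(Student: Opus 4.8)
The plan is to verify each of the required identities by passing to representing paths and invoking the explicit path-level description of the composition product from \autoref{prp:genProp:composition}, reducing everything to the associativity and unit law for composition of generalized \CuMor{s} (which is just composition of maps). First I would check that $\circ\colon\ihom{T,T}\otimes\ihom{S,T}\to\ihom{S,T}$ is a left action: associativity $(yy')\circ x = y\circ(y'\circ x)$ for $y,y'\in\ihom{T,T}$ and $x\in\ihom{S,T}$ is the instance $P=Q=T$ of the associativity statement in \autoref{prp:genProp:compAssoc}, and the unit law $\id_T\circ x = x$ is the corresponding part of \autoref{prp:genProp:compAssoc}. Since $\ihom{T,T}$ is a \CuSrg{} with compact unit $\id_T$ (\autoref{prp:ring:ihomSS} and \autoref{rmk:ring:ihomSS}), this exhibits $\ihom{S,T}$ as a left $\ihom{T,T}$-\CatCu-semimodule; the underlying map $\circ$ is a $\CatCu$-bimorphism because it is the composition product, which is a $\CatCu$-morphism out of a tensor product and hence corresponds to a bimorphism. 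Symmetrically, $x^\ast\colon\ihom{S,S}\to$, i.e. the map $\ihom{S,T}\otimes\ihom{S,S}\to\ihom{S,T}$, $x\otimes w\mapsto x\circ w$, is a right action of $\ihom{S,S}$, again by \autoref{prp:genProp:compAssoc} (with $S$ in the first two slots and $T$ in the third) together with $x\circ\id_S = x$.

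The remaining point is compatibility: $(y\circ x)\circ w = y\circ(x\circ w)$ for $y\in\ihom{T,T}$, $x\in\ihom{S,T}$, $w\in\ihom{S,S}$, so that the left and right actions commute in the sense required of a semibimodule. This is once more a direct instance of the associativity of the composition product in \autoref{prp:genProp:compAssoc}, now with the four semigroups $S,S,T,T$ (reading the composable chain $S\xrightarrow{w}S\xrightarrow{x}T\xrightarrow{y}T$). Concretely, choosing paths $\pathCu{f}$, $\pathCu{g}$, $\pathCu{h}$ representing $w$, $x$, $y$, \autoref{prp:genProp:composition} gives $(y\circ x)\circ w = [(h_\lambda\circ g_\lambda\circ f_\lambda)_\lambda] = y\circ(x\circ w)$, since composition of the generalized \CuMor{s} $f_\lambda\colon S\to S$, $g_\lambda\colon S\to T$, $h_\lambda\colon T\to T$ is associative as composition of ordinary maps, and the path $(h_\lambda\circ g_\lambda\circ f_\lambda)_\lambda$ is the one representing both sides.

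There is essentially no obstacle here: the statement is a formal consequence of the already-established associativity and unit laws for the composition product, specialized to the appropriate choices of \CuSgp{s}, exactly as the hom-objects of an enriched category carry bimodule structures over the endomorphism semirings of the two arguments. If one prefers to avoid quoting \autoref{prp:genProp:compAssoc} as a black box, the only mild bookkeeping is to confirm that the bimorphism associated under \autoref{thm:Cuclosedbijection} to the composite \CuMor{} is the expected pointwise map; this is the same routine check performed in the proof of \autoref{prp:genProp:composition} and can be cited from there. Thus the proof is short: invoke \autoref{prp:genProp:compAssoc} three times (for the left action, the right action, and compatibility) and note that the unit of $\ihom{T,T}$ (resp. $\ihom{S,S}$) is compact so that the constant path represents it, which is what makes $1\cdot x = x$ hold on the nose at the level of path classes.
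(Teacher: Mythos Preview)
Your proposal is correct and follows essentially the same approach as the paper: the paper's proof is a single sentence stating that the result follows directly from the associativity of the composition product, \autoref{prp:genProp:compAssoc}, which is exactly what you do (with some added detail). Your remark about needing compactness of the unit for $1\cdot x=x$ is unnecessary, since the unit law $\id_T\circ x = x = x\circ\id_S$ is already part of \autoref{prp:genProp:compAssoc} and requires no further argument.
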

\begin{proof}
This follows directly from the associativity of the composition product;
see \autoref{prp:genProp:compAssoc}.
\end{proof}

\begin{rmk}
\label{rmk:ring:ihomST}
Let $S$ and $T$ be \CuSgp{s}.
By \autoref{prp:ring:ihomSS_act_S}, the evaluation map $\counit_{T,T}\colon\ihom{T,T}\otimes T\to T$ from \autoref{dfn:genProp:counitMap} defines a left action of $\ihom{T,T}$ on $T$.
By \autoref{prp:ring:inducedLtAction}, this induces a left action of $\ihom{T,T}$ on $\ihom{S,T}$.
This action agrees with that from \autoref{prp:ring:ihomST}.
\end{rmk}

\begin{exa}
\label{exa:ring:ihomSS_noncomm}
Given $k\in\NN$, we let $\NNbar^k$ denote the Cuntz semigroup of the \ca{} $\CC^k$.
We think of $\pathCu{v}\in\NNbar^k$ as a tuple $(v_1,\ldots,v_k)^T$ with $k$ entries in $\NNbar$.
We let $\pathCu{e^{(1)}},\ldots,\pathCu{e^{(k)}}$ denote the `standard basis vectors' of $\NNbar^k$, such that $\pathCu{v}=\sum_{i=1}^k v_i \pathCu{e^{(i)}}$.

Let $k,l\in\NN$.
We claim that $\ihom{\NNbar^k,\NNbar^l}$ can be identified with $M_{l,k}(\NNbar)$, the $l\times k$-matrices with entries in $\NNbar$, with order and addition defined entrywise.
Thus, as a \CuSgp{}, $\ihom{\NNbar^k,\NNbar^l}$ is isomorphic to $\NNbar^{kl}$.
However, the presentation as matrices allows to expatiate the composition product.

First, let $\varphi\colon\NNbar^k\to\NNbar^l$ be a generalized \CuMor.
For each $j\in\{1,\ldots,k\}$, we consider the vector $\varphi(\pathCu{e^{(j)}})$ in $\NNbar^l$ and we let $x_{1,j},\ldots,x_{l,j}$ denote its coefficients.
This defines a matrix $\pathCu{x}=(x_{i,j})_{i,j}$ with $l\times k$ entries in $\NNbar$. It is then readily verified that
the coefficients of $\varphi(\pathCu{v})$ are obtained by multiplication of the matrix $\pathCu{x}$ with the vector of coefficients of $\pathCu{v}$.
We identify $\varphi$ with the associated matrix $\pathCu{x}$ in $M_{l,k}(\NNbar)$.

Let $\varphi,\psi\colon\NNbar^k\to\NNbar^l$ be generalized \CuMor{s} with associated matrices $\pathCu{x}$ and $\pathCu{y}$ in $M_{l,k}(\NNbar)$.
It is straightforward to check that $\varphi\prec\psi$ if and only if $x_{i,j}$ is finite and $x_{i,j}\leq y_{i,j}$ for each $i,j$, and thus the claim follows.

Given $k,l,m\in\NN$, consider the composition product
\[
\ihom{\NNbar^l,\NNbar^m} \otimes \ihom{\NNbar^k,\NNbar^l} \to \ihom{\NNbar^k,\NNbar^m}.
\]
After identifying $\ihom{\NNbar^k,\NNbar^l}$ with $M_{l,k}(\NNbar)$, identifying $\ihom{\NNbar^l,\NNbar^m}$ with $M_{m,l}(\NNbar)$, and identifying $\ihom{\NNbar^k,\NNbar^m}$ with $M_{m,k}(\NNbar)$, the composition product is given as a map
\[
M_{m,l}(\NNbar) \otimes M_{l,k}(\NNbar) \to M_{m,k}(\NNbar).
\]
It is straightforward to check that this map is induced by matrix multiplication.
In particular, the \CuSrg{} $\ihom{\NNbar^k,\NNbar^k}$ can be identified with the matrix ring $M_{k,k}(\NNbar)$.
Thus, for $k\geq 2$, the \CuSrg{} $\ihom{\NNbar^k,\NNbar^k}$ is not commutative.
\end{exa}

Given a \CuSrg{} $R$, recall that $\mu_R\colon R\otimes R\to R$ denotes the \CuMor{} induced by multiplication.

\begin{dfn}
\label{dfn:ring:pi}
Given a \CuSrg{} $R$, we let $\pi_R\colon R\to \ihom{R,R}$ be the \CuMor{} that corresponds to $\mu_R$ under the identification
\[
\CatCuMor\big( R, \ihom{R,R} \big)
\cong \CatCuMor\big( R\otimes R, R \big).
\]
\end{dfn}

\begin{rmk}
The \CuMor{} $\pi_R$ can be regarded as a kind of left regular representation of $R$.
\end{rmk}

\begin{lma}
\label{prp:ring:piUnit}
We have $\pi_R=(\mu_R)_*\circ\unit_{R,R}$ and $\counit_{R,R}\circ(\pi_R\otimes\id_R)=\mu_R$.
\end{lma}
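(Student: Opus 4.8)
The plan is to read off both identities directly from \autoref{prp:genProp:correspondence}, specialized to $S=T=P=R$. Recall that, by \autoref{dfn:ring:pi}, the \CuMor{} $\pi_R\colon R\to\ihom{R,R}$ is \emph{by definition} the image of $\mu_R\colon R\otimes R\to R$ under the bijection
\[
\CatCuMor\big( R\otimes R, R \big) \cong \CatCuMor\big( R, \ihom{R,R} \big)
\]
from \autoref{thm:Cuclosedbijection}; equivalently, $\mu_R$ is the image of $\pi_R$ under the inverse bijection. So the whole point is that $\pi_R$ and $\mu_R$ are a pair of mutually corresponding morphisms, and the two formulas are just the explicit descriptions of the two directions of the correspondence at this pair.

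First I would prove the second identity. Applying \autoref{prp:genProp:correspondence} with $f=\pi_R$ (and $S=T=P=R$), the \CuMor{} $\pi_R$ is identified, under the above bijection, with $\counit_{R,R}\circ(\pi_R\otimes\id_R)\colon R\otimes R\to R$. Since the bijection sends $\pi_R$ to $\mu_R$ and is injective, this forces $\counit_{R,R}\circ(\pi_R\otimes\id_R)=\mu_R$.

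Then I would prove the first identity using the converse direction of \autoref{prp:genProp:correspondence}: with $g=\mu_R$, the \CuMor{} $\mu_R\colon R\otimes R\to R$ is identified with $(\mu_R)_\ast\circ\unit_{R,R}\colon R\xrightarrow{\unit_{R,R}}\ihom{R,R\otimes R}\xrightarrow{(\mu_R)_\ast}\ihom{R,R}$. As this image must equal $\pi_R$, we obtain $\pi_R=(\mu_R)_\ast\circ\unit_{R,R}$. (Alternatively, both identities drop out at once by substituting $f=\pi_R$ and $g=\mu_R$ into the two ``in particular'' equations of \autoref{prp:genProp:correspondence}.)

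There is essentially no obstacle here: all the content sits in \autoref{prp:genProp:correspondence}, and this lemma is merely the case $S=T=P=R$ evaluated at the corresponding pair $\pi_R$, $\mu_R$. The only thing to be careful about is bookkeeping the variance conventions in \autoref{prp:genProp:correspondence}, namely that the ``first variable'' $S$ and the inner-hom source $T$ are both instantiated as $R$, so that $f\otimes\id_T$ reads as $\pi_R\otimes\id_R$ and $g_\ast\circ\unit_{S,T}$ reads as $(\mu_R)_\ast\circ\unit_{R,R}$; this is routine.
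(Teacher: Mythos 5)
Your proof is correct and matches the paper's approach: both identities are precisely the two directions of the adjunction correspondence in \autoref{prp:genProp:correspondence}, applied to the pair $\pi_R$, $\mu_R$, which correspond to each other by \autoref{dfn:ring:pi}. The only cosmetic difference is that for the second identity the paper re-derives the relevant ``in particular'' equation by an explicit computation (naturality of $\counit$ together with the triangle identity $\counit_{R,R\otimes R}\circ(\unit_{R,R}\otimes\id_R)=\id_{R\otimes R}$ from \autoref{prp:genProp:unit_counit}), whereas you simply invoke the bijectivity of the correspondence; both are valid.
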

\begin{proof}
The first equality follows from \autoref{prp:genProp:correspondence}.
It is straightforward to show that $\counit_{R,R}\circ((\mu_R)_*\otimes\id_R)=\mu_R\circ\counit_{R,R\otimes R}$.
Further, we have $\counit_{R,R\otimes R}\circ(\unit_{R,R}\otimes\id_R)=\id_{R\otimes R}$ by \autoref{prp:genProp:unit_counit}.
Using these equations, we deduce that
\begin{align*}
\counit_{R,R}\circ(\pi_R\otimes\id_R)
&= \counit_{R,R}\circ(((\mu_R)_*\circ\unit_{R,R})\otimes\id_R) \\
&= \counit_{R,R}\circ((\mu_R)_*\otimes\id_R)\circ(\unit_{R,R}\otimes\id_R) \\
&= \mu_R\circ\counit_{R,R\otimes R}\circ(\unit_{R,R}\otimes\id_R)
= \mu_R. \qedhere
\end{align*}
\end{proof}

\begin{thm}
\label{prp:ring:piMultiplicative}
Let $R$ be a \CuSrg.
Then $\pi_R\colon R\to\ihom{R,R}$ is multiplicative.
If the unit element of $R$ is compact, then $\pi_R$ is unital.
\end{thm}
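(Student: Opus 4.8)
The plan is to first obtain an explicit description of $\pi_R$ in terms of paths, and then deduce both assertions from the concrete formula for the composition product in \autoref{prp:genProp:composition}. For $t\in R$, write $L_t\colon R\to R$ for the map $s\mapsto ts$; this is a generalized \CuMor{} since $\mu_R$ is a $\CatCu$-bimorphism. Fix $r\in R$ and pick a path $(r_\lambda)_{\lambda\in I_\QQ}$ in $(R,\ll)$ with $\sup_\lambda r_\lambda=r$ (cf.\ the discussion before \autoref{prp:genProp:unitMap}). If $\lambda'<\lambda$ then $r_{\lambda'}\ll r_\lambda$, so for $a'\ll a$ in $R$ the fact that $\mu_R$ preserves the joint way-below relation gives $r_{\lambda'}a'\ll r_\lambda a$; thus $L_{r_{\lambda'}}\prec L_{r_\lambda}$, and $(L_{r_\lambda})_\lambda$ is a path in $(\CatCuMor[R,R],\prec)$. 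Combining $\pi_R=(\mu_R)_*\circ\unit_{R,R}$ from \autoref{prp:ring:piUnit}, the identity $\unit_{R,R}(r)=[(r_\lambda\otimes\freeVar)_\lambda]$ from \autoref{prp:genProp:unitMap}, the fact that $(\mu_R)_*$ acts on path classes by $[(h_\lambda)_\lambda]\mapsto[(\mu_R\circ h_\lambda)_\lambda]$ (cf.\ the proof of \autoref{prp:bivarCu:inclInducesOEmb}), and $\mu_R\circ(r_\lambda\otimes\freeVar)=L_{r_\lambda}$, I would conclude
\[
\pi_R(r)=\big[(L_{r_\lambda})_\lambda\big];
\]
since $\pi_R$ is a well-defined \CuMor, this holds for every choice of path with endpoint $r$.

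Granting this, multiplicativity is immediate. Given $r_1,r_2\in R$, I would choose paths $(r_{1,\lambda})_\lambda$ and $(r_{2,\lambda})_\lambda$ in $(R,\ll)$ with endpoints $r_1$ and $r_2$; the bimorphism axioms for $\mu_R$ ensure that $(r_{1,\lambda}r_{2,\lambda})_\lambda$ is again a path in $(R,\ll)$, with endpoint $r_1r_2$. By the path description of $\pi_R$ and \autoref{prp:genProp:composition},
\[
\pi_R(r_1)\circ\pi_R(r_2)=\big[(L_{r_{1,\lambda}})_\lambda\big]\circ\big[(L_{r_{2,\lambda}})_\lambda\big]=\big[(L_{r_{1,\lambda}}\circ L_{r_{2,\lambda}})_\lambda\big],
\]
and associativity of the product of $R$ gives $L_{r_{1,\lambda}}\circ L_{r_{2,\lambda}}=L_{r_{1,\lambda}r_{2,\lambda}}$ for each $\lambda$. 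Applying the path description of $\pi_R$ once more, this time to the path $(r_{1,\lambda}r_{2,\lambda})_\lambda$, identifies the right-hand side with $\pi_R(r_1r_2)$.

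Finally, for the unital claim, suppose $1\in R$ is compact. Then the constant family $(1)_\lambda$ is a path in $(R,\ll)$ with endpoint $1$, so the above gives $\pi_R(1)=[(L_1)_\lambda]=[(\id_R)_\lambda]$, the class of the constant path at $\id_R$. For an arbitrary $y=[(g_\lambda)_\lambda]\in\ihom{R,R}$, \autoref{prp:genProp:composition} yields $\pi_R(1)\circ y=[(\id_R\circ g_\lambda)_\lambda]=y$ and $y\circ\pi_R(1)=[(g_\lambda\circ\id_R)_\lambda]=y$; hence $\pi_R(1)$ is a two-sided unit for the composition product on $\ihom{R,R}$, and so it coincides with the unit $\id_R$ of the \CuSrg{} $\ihom{R,R}$ (\autoref{prp:ring:ihomSS}). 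The step I expect to require the most care is the first one: verifying that the maps $L_t$ genuinely are generalized \CuMor{s}, that the families listed above genuinely are paths, and that the diagonal suprema compute correctly from the bimorphism axioms. Once this is settled, multiplicativity and unitality fall out formally from \autoref{prp:genProp:composition} and the associativity and unitality of $\mu_R$.
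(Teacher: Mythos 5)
Your proposal is correct and takes essentially the same route as the paper's proof: both first derive the explicit description $\pi_R(r)=[(L_{r_\lambda})_\lambda]$ from $\pi_R=(\mu_R)_*\circ\unit_{R,R}$ (\autoref{prp:ring:piUnit}) together with \autoref{prp:genProp:unitMap}, then obtain multiplicativity from \autoref{prp:genProp:composition} and the identity $L_{r_{1,\lambda}}\circ L_{r_{2,\lambda}}=L_{r_{1,\lambda}r_{2,\lambda}}$, and finally treat the unit via the constant path at $1_R$ when $1_R$ is compact.
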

\begin{proof}
Let $M\colon\ihom{R,R}\otimes\ihom{R,R}\to\ihom{R,R}$ denote the composition map.
We need to show that $M\circ(\pi_R\otimes\pi_R) = \pi_R\circ\mu_R$.

Given $r,s\in R$, choose paths $\pathCu{r}=(r_\lambda)_\lambda$ and $\pathCu{s}=(s_\lambda)$ in $(R,\ll)$ with endpoints $r$ and $s$, respectively.
For each $\lambda$, let $f_\lambda\colon R\to R$ and $g_\lambda\colon R\to R$ be the generalized \CuMor{} given by left multiplication with $r_\lambda$ and $s_\lambda$, respectively.
By \autoref{prp:genProp:unitMap}, we have $\unit_{R,R}(r) = [(r_\lambda\otimes\freeVar)_\lambda]$, where $r_\lambda\otimes\freeVar\colon R\to R\otimes R$ is the map sending $t\in R$ to $r_\lambda\otimes t$.
We also have $\mu_R\circ(r_\lambda\otimes\freeVar)=f_\lambda$.
Since $\pi_R=(\mu_R)_*\circ\unit_{R,R}$ by \autoref{prp:ring:piUnit}, it follows that $\pi_R(r)=[(f_\lambda)_\lambda]$.
Likewise, we deduce $\pi_R(s)=[(g_\lambda)_\lambda]$.
By \autoref{prp:genProp:composition}, we obtain $M(\pi_R(r)\otimes\pi_R(s))=[(f_\lambda\circ g_\lambda)_\lambda]$.

As the product in $R$ is associative, the composition $f_\lambda\circ g_\lambda$ is the generalized \CuMor{} $h_\lambda$ defined by left multiplication with $r_\lambda s_\lambda$.
Notice that $(r_\lambda s_\lambda)_\lambda$ is a path in $(R,\ll)$ with endpoint $rs$.
Therefore, $\pi_R(rs)=[(h_\lambda)_\lambda]$.
Altogether, we get the desired equality
\[
M(\pi_R(r)\otimes\pi_R(s))=[(f_\lambda\circ g_\lambda)_\lambda]=[(h_\lambda)_\lambda]=\pi_R(\mu_R(r\otimes s)).
\]

To show the second statement, let us assume that the unit $1_R$ of $R$ is compact.
Then the constant function with value $1_R$ is a path in $(R,\ll)$ with endpoint $1_R$. It follows easily as in the first part of the proof that $\pi_R(1_R)=[(\id_R)_\lambda]=\id_R$.
\end{proof}

\begin{dfn}
\label{dfn:ring:eps}
Let $R$ be a \CuSrg{} with unit $1_R$.
Then $\varepsilon_R\colon \ihom{R,R}\to R$ is the generalized \CuMor{} given by
\[
\varepsilon_R([\pathCu{f}]) = \sup_\lambda f_\lambda(1_R),
\]
for a path $\pathCu{f}=(f_\lambda)_\lambda$ in $\CatCuMor[R,R]$.
\end{dfn}

\begin{rmk}
\label{rmk:ring:eps}
Let $\sigma_{R,R}\colon\ihom{R,R}\to\CatCuMor[R,R]$ denote the endpoint map as in \autoref{dfn:endpoint}, 
Then $\varepsilon_R(x)=\sigma_{R,R}(x)(1_R)$ for every $x\in\ihom{R,R}$.
\end{rmk}

\begin{lma}
\label{prp:ring:eps}
We have $\varepsilon_R\circ\pi_R = \id_R$.
\end{lma}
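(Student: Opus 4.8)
Since $\varepsilon_R\circ\pi_R$ is an additive map $R\to R$, it suffices to check that it fixes every element $r\in R$. The strategy is to rewrite $\varepsilon_R$ in terms of the counit map and then invoke \autoref{prp:ring:piUnit}. Concretely, by \autoref{rmk:ring:eps} we have $\varepsilon_R(x)=\sigma_{R,R}(x)(1_R)$ for every $x\in\ihom{R,R}$, and by \autoref{prp:genProp:counitMap} this equals $\counit_{R,R}(x\otimes 1_R)$. Applying this with $x=\pi_R(r)$ gives
\[
(\varepsilon_R\circ\pi_R)(r)=\counit_{R,R}(\pi_R(r)\otimes 1_R).
\]

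Now I would use that $\counit_{R,R}\circ(\pi_R\otimes\id_R)=\mu_R$ by \autoref{prp:ring:piUnit}. Evaluating both sides at the simple tensor $r\otimes 1_R\in R\otimes R$ yields $\counit_{R,R}(\pi_R(r)\otimes 1_R)=\mu_R(r\otimes 1_R)=r\,1_R=r$, using that $1_R$ is the unit of $R$. Combining the two displays gives $(\varepsilon_R\circ\pi_R)(r)=r$ for all $r$, which is the claim.

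Alternatively, one can argue directly with paths, mirroring the computation in the proof of \autoref{prp:ring:piMultiplicative}: pick a path $(r_\lambda)_\lambda$ in $(R,\ll)$ with endpoint $r$, so that $\pi_R(r)=[(f_\lambda)_\lambda]$ where $f_\lambda\colon R\to R$ is left multiplication by $r_\lambda$; then by \autoref{dfn:ring:eps} we get $\varepsilon_R(\pi_R(r))=\sup_\lambda f_\lambda(1_R)=\sup_\lambda r_\lambda 1_R=\sup_\lambda r_\lambda=r$. There is no real obstacle here: the statement is an immediate consequence of the way $\pi_R$ and $\varepsilon_R$ were set up, and the only point requiring a little care is matching the definition of $\varepsilon_R$ with the counit map, which is exactly what \autoref{rmk:ring:eps} records.
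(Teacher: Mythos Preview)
Your proposal is correct. Your alternative path-based argument is exactly the paper's proof: pick a path $(r_\lambda)_\lambda$ with endpoint $r$, observe $\pi_R(r)=[(f_\lambda)_\lambda]$ with $f_\lambda$ left multiplication by $r_\lambda$, and compute $\varepsilon_R(\pi_R(r))=\sup_\lambda f_\lambda(1_R)=\sup_\lambda r_\lambda 1_R=r$.

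Your primary argument via the counit is a slightly different and equally valid route: instead of unwinding paths, you use \autoref{rmk:ring:eps} and \autoref{prp:genProp:counitMap} to rewrite $\varepsilon_R$ as evaluation of the counit at $1_R$, and then invoke the identity $\counit_{R,R}\circ(\pi_R\otimes\id_R)=\mu_R$ from \autoref{prp:ring:piUnit}. This is arguably cleaner, since it avoids re-deriving the path description of $\pi_R(r)$ and instead leverages a categorical identity already proved; the paper's direct computation, on the other hand, is self-contained and does not require tracing back through the counit machinery. Either way the content is the same, and as you note there is nothing subtle here.
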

\begin{proof}
Given $r\in R$, choose a path $(r_\lambda)_\lambda$ in $(R,\ll)$ with endpoint $r$, and for each $\lambda$ let $f_\lambda\colon R\to R$ be given by left multiplication with $r_\lambda$.
As in the proof of \autoref{prp:ring:piMultiplicative}, we obtain $\pi_R(r)=[(f_\lambda)_\lambda]$, whence
\[
\varepsilon_R(\pi_R(r))
= \varepsilon_R([(f_\lambda)_\lambda])
= \sup_\lambda f_\lambda(1_R)
= \sup_\lambda (r_\lambda 1_R)
= r. \qedhere
\]
\end{proof}

\begin{prp}
\label{prp:ring:R_sub_ihomRR}
Let $R$ be a \CuSrg.
Then $\pi_R\colon R\to\ihom{R,R}$ is a multiplicative order-embedding.
Thus, in a natural way, $R$ is a sub-semiring of $\ihom{R,R}$.
If the unit of $R$ is compact, then $R$ is even a unital sub-semiring of $\ihom{R,R}$.
\end{prp}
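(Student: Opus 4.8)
The plan is to assemble facts already established. By \autoref{prp:ring:piMultiplicative}, $\pi_R$ is a multiplicative \CuMor, and it is unital whenever the unit $1_R$ of $R$ is compact (indeed $\pi_R(1_R)=\id_R$ in that case). So two things remain: that $\pi_R$ is an order-embedding, and that its image is a sub-\CuSgp{} of $\ihom{R,R}$.

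For the order-embedding, recall that $\pi_R$ preserves the order since it is a \CuMor. Conversely, let $r,s\in R$ with $\pi_R(r)\leq\pi_R(s)$. The map $\varepsilon_R\colon\ihom{R,R}\to R$ of \autoref{dfn:ring:eps} is a generalized \CuMor{}, hence order-preserving, and it satisfies $\varepsilon_R\circ\pi_R=\id_R$ by \autoref{prp:ring:eps}; applying $\varepsilon_R$ therefore gives
\[
r=\varepsilon_R(\pi_R(r))\leq\varepsilon_R(\pi_R(s))=s.
\]
Thus $\pi_R$ reflects the order, so it is an order-embedding, and in particular injective.

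It remains to verify that $\pi_R(R)\subseteq\ihom{R,R}$ is a sub-\CuSgp{}; it is then automatically a sub-semiring, since $\pi_R$ is additive and multiplicative, and a unital one when $1_R$ is compact. First, $\pi_R(R)$ is a submonoid. Next, since $\pi_R$ is an order-embedding, any increasing sequence in $\pi_R(R)$ has the form $(\pi_R(r_n))_n$ with $(r_n)_n$ increasing in $R$; by \axiomO{1} the supremum $r:=\sup_n r_n$ exists, and $\pi_R(r)=\sup_n\pi_R(r_n)$ because $\pi_R$ preserves suprema of increasing sequences. Hence $\pi_R(R)$ is closed under suprema of increasing sequences, which moreover are computed as in $\ihom{R,R}$. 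Finally, for $r',r\in R$ one has $\pi_R(r')\ll\pi_R(r)$ in $\ihom{R,R}$ if and only if $r'\ll r$ in $R$: the ``if'' direction holds because $\pi_R$ is a \CuMor, and the ``only if'' direction is the standard argument for order-embeddings (test against an increasing sequence $(c_n)_n$ in $R$ with $r\leq\sup_n c_n$, apply $\pi_R$, use $\pi_R(r')\ll\pi_R(r)\leq\sup_n\pi_R(c_n)$ to find $k$ with $\pi_R(r')\leq\pi_R(c_k)$, then invoke the order-embedding to get $r'\leq c_k$). Since, via the order-isomorphism $\pi_R$, the way-below relation on $\pi_R(R)$ is just that of $R$, it agrees with the way-below relation of $\ihom{R,R}$; by the characterization of sub-\CuSgp{s} recalled at the start of \autoref{sec:bivarCu:functoriality}, $\pi_R(R)$ is a sub-\CuSgp. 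Altogether, $\pi_R$ is an isomorphism of \CuSrg{s} from $R$ onto $\pi_R(R)\subseteq\ihom{R,R}$.

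Given the preparatory lemmas---in particular the existence of $\varepsilon_R$ and \autoref{prp:ring:eps}---no genuine obstacle remains; the only step not immediately reducible to a citation is the agreement of the way-below relations, which is the routine fact that an order-embedding \CuMor{} with image closed under suprema of increasing sequences reflects $\ll$.
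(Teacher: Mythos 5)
Your proof is correct and follows the paper's own route: the order-embedding comes from $\varepsilon_R\circ\pi_R=\id_R$ (Lemma~\ref{prp:ring:eps}) together with monotonicity of $\varepsilon_R$, and multiplicativity/unitality is Theorem~\ref{prp:ring:piMultiplicative}. The additional verification that $\pi_R(R)$ is a sub-\CuSgp{} (closure under suprema and reflection of $\ll$) is correct but goes beyond what the statement asserts---the paper only claims ``sub-semiring'' and stops after the two citations above.
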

\begin{proof}
By \autoref{prp:ring:eps}, we have $\varepsilon_R\circ\pi_R = \id_R$, which implies that $\pi_R$ is an order-embedding.
By \autoref{prp:ring:piMultiplicative}, $\pi_R$ is a (unital) multiplicative \CuMor.
\end{proof}

An element $a$ in a \CuSgp{} $S$ is \emph{soft} if for every $a'\in S$ with $a'\ll a$ there exists $k\in\NN$ with $(k+1)a'\leq ka$;
see \cite[Definition~5.3.1]{AntPerThi18:TensorProdCu}. 
The following result will be used below.

\begin{lma}
\label{prp:appl:genCuPresSoft}
Let $S$ and $T$ be \CuSgp{s}, let $\varphi\colon S\to T$ be a generalized \CuMor{}, and let $a\in S$ be soft.
Then $\varphi(a)$ is soft.
\end{lma}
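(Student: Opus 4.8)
The plan is to unwind the definition of softness directly, using only the defining properties of a generalized $\CatCu$-morphism, namely that it is additive, order-preserving, and preserves suprema of increasing sequences (but \emph{not} the way-below relation). Let $a \in S$ be soft, and let $b' \in T$ satisfy $b' \ll \varphi(a)$; the goal is to produce $k \in \NN$ with $(k+1)b' \leq k\varphi(a)$. The first step is to choose, via \axiomO{2}, an increasing sequence $(a_n)_n$ in $S$ with $a_n \ll a_{n+1}$ and $\sup_n a_n = a$. Since $\varphi$ preserves suprema of increasing sequences, $\varphi(a) = \sup_n \varphi(a_n)$, so from $b' \ll \varphi(a)$ we get some $n$ with $b' \leq \varphi(a_n)$. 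Now I would like to replace $a_n$ by some $a'$ with $a' \ll a$; indeed $a_n \ll a_{n+1} \leq a$, so $a_n \ll a$, and we may set $a' := a_n$, giving $b' \leq \varphi(a')$ with $a' \ll a$.

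Next, apply softness of $a$ to this $a' \ll a$: there exists $k \in \NN$ with $(k+1)a' \leq ka$. Apply $\varphi$, which is additive and order-preserving, to obtain $(k+1)\varphi(a') \leq k\varphi(a)$. Finally, since $b' \leq \varphi(a')$, we have $(k+1)b' \leq (k+1)\varphi(a') \leq k\varphi(a)$, which is exactly what softness of $\varphi(a)$ requires. This completes the argument.

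I do not anticipate a genuine obstacle here; the only point that needs a moment's care is that we are allowed to use softness only for elements \emph{way-below} $a$, which is why the intermediate step of extracting $a' \ll a$ with $b' \leq \varphi(a')$ (rather than naively trying to compare $b'$ with $\varphi(a)$ directly) is essential. Note that preservation of $\ll$ is \emph{not} used anywhere, so the result holds for generalized $\CatCu$-morphisms and not merely $\CatCu$-morphisms, as claimed.
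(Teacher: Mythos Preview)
Your proof is correct and follows essentially the same approach as the paper's own proof: both pick $a'\ll a$ with $b'\leq\varphi(a')$ by writing $a$ as the supremum of a $\ll$-increasing sequence (via \axiomO{2}) and using that $\varphi$ preserves such suprema, then apply softness of $a$ to $a'$ and push the resulting inequality through $\varphi$ using additivity and order-preservation.
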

\begin{proof}
To verify that $\varphi(a)$ is soft, let $x\in T$ satisfy $x\ll\varphi(a)$.
Using that $\varphi$ preserves suprema of increasing sequences, we can choose $a'\in S$ with $a'\ll a$ and $x\leq\varphi(a')$.
(Indeed, applying \axiomO{2} in $S$, choose a $\ll$-increasing sequence $(a_n)_n$ in $S$ with supremum $a$.
Then $\varphi(a)=\sup_n\varphi(a_n)$, whence there is $n$ with $x\leq\varphi(a_n)$.)
Since $a$ is soft, we can choose $k\in\NN$ such that $(k+1)a'\leq ka$.
Then
\[
(k+1)x
\leq (k+1) \varphi(a')
= \varphi((k+1)a')
\leq \varphi(ka)
= k\varphi(a). \qedhere
\]
\end{proof}

Let $\PPbar=[0,\infty]$, with natural order and addition. Recall that $\PPbar$ is isomorphic to the Cuntz semigroup of the Jacelon-Razak algebra (see \cite{Jac13Projectionless} and \cite{Rob13Cone}). The usual multiplication of real numbers extends to $\PPbar$.
This gives $\PPbar$ the structure of a commutative \CuSrg.

\begin{exa}
\label{exa:ring:M1}
Let $M_1=[0,\infty)\sqcup (0,\infty]$, a disjoint union of compact elements $[0,\infty)$ and nonzero soft elements $(0,\infty]$. Recall that $M_1$ denotes the Cuntz semigroup of a $\mathrm{II}_1$-factor;
see \cite[Example 4.14]{AntPerThi17arX:AbsBivariantCu} and \cite[Proposition 4.15]{AntPerThi17arX:AbsBivariantCu}.
We identify $\PPbar=[0,\infty]$ with the sub-\CuSgp{} of soft elements in $M_1$, and we define the \CuMor{} $\varrho\colon M_1\to\PPbar\subseteq M_1$ by fixing all soft elements and by sending a compact to the soft element of the same value.

We define a product on $M_1$ as follows:
We equip the compact part $[0,\infty)$ with the usual multiplication of real numbers, and similarly for the product in $(0,\infty]$.
The product of any element with $0$ is $0$.
Given a nonzero compact element $a$ and a nonzero soft element $b$, their product is defined as the soft element $ab:=\varrho(a)b$.

This gives $M_1$ the structure of a commutative $\CatCu$-semiring.
Moreover, we may identify $\PPbar$ with the (nonunital) sub-$\CatCu$-semiring of soft elements in $M_1$.
The map $\varrho\colon M_1\to\PPbar$ is multiplicative.
One can show that the map $\pi_{M_1}\colon M_1\to\ihom{M_1,M_1}$ is an isomorphism.
\end{exa}

\begin{exa}
\label{exa:ring:PPbar}
We have $\ihom{\PPbar,\PPbar}\cong M_1$.
The map $\pi_{\PPbar}\colon \PPbar\to\ihom{\PPbar,\PPbar}$ embeds $\PPbar$ as the sub-$\CatCu$-semiring of soft elements in $M_1$.
In particular, $\pi_{\PPbar}$ is not unital.
\end{exa}
\begin{proof}
We have $\ihom{\PPbar,\PPbar}\cong M_1$ by \cite[Proposition 5.13]{AntPerThi17arX:AbsBivariantCu}.
By \autoref{prp:ring:R_sub_ihomRR}, $\pi_{\PPbar}$ is a multiplicative order-embedding.
Note that every element of $\PPbar$ is soft.
By \autoref{prp:appl:genCuPresSoft}, a generalized \CuMor{} maps soft elements to soft elements.
Thus, the image of $\pi_{\PPbar}$ is contained in the soft elements of $M_1$.
It easily follows that $\pi_{\PPbar}$ identifies $\PPbar$ with the soft elements in $M_1$.
Since the unit of $M_1$ is compact, it also follows that $\pi_{\PPbar}$ is not unital.
\end{proof}

We finally turn our attention to solid semirings.
Recall from \cite[Definition~7.1.5]{AntPerThi18:TensorProdCu} that a \CuSrg{} $R$ is said to be \emph{solid} if $\mu_R\colon R\otimes R\to R$ is an isomorphism.
In \cite{AntPerThi18:TensorProdCu}, all \CuSrg{s} were required to be commutative, and thus a solid \CuSrg{} was assumed to be commutative.
Next, we show that this assumption is not necessary since a \CuSrg{} is automatically commutative as soon as $\mu_R$ is injective.

\begin{lma}
\label{prp:ring:mu_inj}
Let $R$ be a \CuSrg{} such that $\mu_R\colon R\otimes R\to R$ is injective.
Then $R$ is commutative and $\mu_R$ is an isomorphism (and consequently $R$ is solid.)
\end{lma}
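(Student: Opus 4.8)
The goal is to deduce commutativity of $R$ from injectivity of $\mu_R\colon R\otimes R\to R$, and then that $\mu_R$ is surjective (hence an isomorphism). The key observation is that $R\otimes R$ carries a symmetry, namely the flip isomorphism $\tau_R\colon R\otimes R\to R\otimes R$ with $\tau_R(r\otimes s)=s\otimes r$, which comes from the symmetric monoidal structure on $\CatCu$. Writing $\mu_R^{\mathrm{op}}:=\mu_R\circ\tau_R$, this is the $\CatCu$-morphism induced by the opposite multiplication $(r,s)\mapsto sr$. I want to show $\mu_R^{\mathrm{op}}=\mu_R$, which is exactly the statement that $rs=sr$ for all $r,s\in R$ (since $\mu_R^{\mathrm{op}}(r\otimes s)=sr$ and $\mu_R(r\otimes s)=rs$, and two $\CatCu$-morphisms out of $R\otimes R$ agree iff they agree on simple tensors, by the universal property in \autoref{prp:prelim:tensCu}).

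\textbf{First step: surjectivity.} Since $R$ has a unit $1_R$, the map $r\mapsto r\otimes 1_R$ followed by $\mu_R$ is the identity on $R$; concretely, $\mu_R(r\otimes 1_R)=r1_R=r$. Hence $\mu_R$ is surjective. Combined with the assumed injectivity, $\mu_R$ is a bijective $\CatCu$-morphism; since $\CatCu$-morphisms that are bijective are isomorphisms (the inverse is automatically a $\CatCu$-morphism, as it is an order-isomorphism and order-isomorphisms between $\CatCu$-semigroups preserve $\ll$ and suprema), $\mu_R$ is an isomorphism, so $R$ is solid.

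\textbf{Second step: commutativity.} This is where the injectivity does the real work. Consider the element $1_R\otimes 1_R\in R\otimes R$. On one hand $\mu_R(1_R\otimes 1_R)=1_R$. Now I want to exploit associativity: for $r,s\in R$, compare $\mu_R((rs)\otimes 1_R)=rs$ and $\mu_R(r\otimes s)=rs$; these are equal, so by injectivity of $\mu_R$ we get $(rs)\otimes 1_R=r\otimes s$ in $R\otimes R$. Applying the flip $\tau_R$ gives $1_R\otimes(rs)=s\otimes r$. But also, running the same argument with the roles of the factors swapped --- i.e.\ using that $\mu_R(1_R\otimes (sr))=sr=\mu_R(s\otimes r)$, hence $1_R\otimes(sr)=s\otimes r$ by injectivity --- we obtain $1_R\otimes(rs)=1_R\otimes(sr)$. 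Now apply $\mu_R$ to both sides: $rs=sr$. (Alternatively, once $r\otimes s=(rs)\otimes 1_R$ for all $r,s$, apply $\tau_R$ to get $s\otimes r=1_R\otimes(rs)$, then also $s\otimes r=(sr)\otimes 1_R$ from the first identity with $r,s$ swapped, so $(sr)\otimes 1_R=1_R\otimes(rs)$; applying $\mu_R$ yields $sr=rs$.) Either way, commutativity follows.

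\textbf{Main obstacle.} The only subtle point is justifying the identity $(rs)\otimes 1_R=r\otimes s$ in $R\otimes R$ rigorously: it does not follow from a pointwise computation but from injectivity of the $\CatCu$-morphism $\mu_R$ together with the fact that $\mu_R$ sends both of these simple tensors to $rs$. Here I must be careful that ``injective'' for a $\CatCu$-morphism means injective as a map of sets, and that simple tensors generate $R\otimes R$ in the appropriate sense so that the computation is legitimate --- but since we only need the equality of the two specific elements $(rs)\otimes 1_R$ and $r\otimes s$, and $\mu_R$ separates them only if they are already equal, injectivity gives exactly this. I would also double-check that the flip $\tau_R$ is a genuine $\CatCu$-isomorphism (it is, by the symmetric monoidal structure of $\CatCu$ recalled in the preliminaries), so that applying it to an equality of elements is valid. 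No step requires the unit of $R$ to be compact, so the conclusion holds in full generality.
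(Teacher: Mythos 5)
Your commutativity argument is correct and is a nice variant of the paper's: where the paper derives $1\otimes a=a\otimes 1$ from injectivity and then passes through a shuffle on the triple tensor product $R\otimes R\otimes R$, you derive $(rs)\otimes 1_R=r\otimes s$ and only need the flip on $R\otimes R$; applying $\mu_R$ to the flipped identity $1_R\otimes(rs)=s\otimes r$ gives $rs=sr$ directly. That part is fine and arguably more economical than the published argument.

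The gap is in your first step. Surjectivity of $\mu_R$ is correct, since $\mu_R(r\otimes 1_R)=r$, but the inference ``bijective \CuMor{} $\Rightarrow$ isomorphism'' is not a valid general principle: a bijective order-preserving map need not have an order-preserving inverse, so you cannot assert that the set-theoretic inverse of $\mu_R$ ``is an order-isomorphism''. Injectivity of a \CuMor{} is strictly weaker than being an order-embedding, and nothing in the axioms \axiomO{1}--\axiomO{4} upgrades a monotone bijection to an order-isomorphism for free; this is exactly the point where the paper instead invokes \cite[Proposition~7.1.6]{AntPerThi18:TensorProdCu}, the nontrivial statement that commutativity together with $1\otimes a=a\otimes 1$ forces $\mu_R$ to be an isomorphism. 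Your argument can, however, be repaired without that citation: for any $x\in R\otimes R$ one has $\mu_R\bigl(\mu_R(x)\otimes 1_R\bigr)=\mu_R(x)$, so injectivity yields $x=\mu_R(x)\otimes 1_R$; hence the set-theoretic inverse of $\mu_R$ is the map $r\mapsto r\otimes 1_R$, which is additive and order-preserving because the universal bimorphism $\omega$ is so in each variable. Therefore $\mu_R$ is an additive order-isomorphism, and only at this point does your parenthetical observation apply: an order-isomorphism automatically preserves $\ll$ and suprema of increasing sequences in both directions, so $\mu_R$ is a $\CatCu$-isomorphism. With that repair the proof is complete, and your closing remark is then accurate that no compactness of $1_R$ is needed.
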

\begin{proof}
To show that $R$ is commutative, let $a,b\in R$.
We have
\[
\mu_R(1\otimes a)=a=\mu_R(a\otimes 1),
\]
and thus $1\otimes a=a\otimes 1$ in $R\otimes R$.
Consider the shuffle \CuMor{} $\alpha\colon R\otimes R\otimes R\to R\otimes R\otimes R$ that satisfies $\alpha(x\otimes y\otimes z)=y\otimes x\otimes z$ for every $x,y,z\in R$.
Then
\[
1\otimes b \otimes a
= \alpha(b\otimes 1\otimes a)
= \alpha(b\otimes a\otimes 1)
= a\otimes b\otimes 1
\]
in $R\otimes R\otimes R$.
By the associativity of the product in $R$, we get $ba=ab$, as desired.

Thus, $R$ is commutative and $1\otimes a=a\otimes 1$ in $R\otimes R$, for every $a\in R$.
Using \cite[Proposition~7.1.6]{AntPerThi18:TensorProdCu}, this implies that $R$ is solid.
\end{proof}

Let $R$ be a solid \CuSrg, and let $S$ be a \CuSgp.
It was shown in \cite[Corollary~7.1.8]{AntPerThi18:TensorProdCu} that any two $R$-actions on $S$ agree.
(Since $R$ is commutative, we need not distinguish between left and right $R$-actions.)
Thus, $S$ either has a (unique) $R$-action, or it does not admit any $R$-action, which means that having an $R$-action is a property rather than an additional structure for $S$, which justifies the following definition.

\begin{dfn}
\label{dfn:ring:Rstable}
Let $R$ be a solid \CuSrg, and let $S$ be a \CuSgp.
We say that $S$ is \emph{$R$-stable} if $S$ has an $R$-action.
\end{dfn}

\begin{rmk}
\label{rmk:ring:Rstable}
In \cite{AntPerThi18:TensorProdCu}, we said that $S$ `has $R$-multiplication' if it has an $R$-action.
Given a solid ring $R$, it was shown \cite[Theorem~7.1.12]{AntPerThi18:TensorProdCu} that $S$ is $R$-stable if and only if $S\cong R\otimes S$.

Recall that a \ca{} $A$ is said to be \emph{$\mathcal{Z}$-stable} if $A\cong\mathcal{Z}\otimes A$, and similarly one defines being UHF-stable and $\mathcal{O}_\infty$-stable.
Thus, the terminology of being `$R$-stable' for \CuSgp{s} is analogous to the terminology used for \ca{s}.
\end{rmk}

\begin{thm}
\label{prp:ring:charSolid}
Given a \CuSrg{} $R$, consider the following statements:
\begin{enumerate}
\item
$R$ is solid, that is, $\mu\colon R\otimes R\to R$ is an isomorphism.
\item
The map $\counit_{R,R}\colon\ihom{R,R}\otimes R\to R$ is an isomorphism.
\item
The map $\pi_R\otimes\id_R\colon R\otimes R\to \ihom{R,R}\otimes R$ is an isomorphism.
\item
The map $\pi_R\colon R\to \ihom{R,R}$ is an isomorphism.
\item
The map $\varepsilon_R\colon \ihom{R,R}\to R$ is an isomorphism.
\end{enumerate}
Then the following implications hold:
\[
(1) \Leftarrow (2) \Rightarrow (3) \Leftarrow (4) \Leftrightarrow (5).
\]
Further, if $R$ satisfies (1) and (3), then it satisfies~(2).
The \CuSrg{} $\PPbar$ satisfies~(1),(2) and~(3), but not~(4);
see \autoref{exa:ring:PPbar}.
The \CuSrg{} $M_1$ satisfies~(3) and~(4) but neither~(1) nor~(2);
see \autoref{exa:ring:M1}.
\end{thm}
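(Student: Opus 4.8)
The plan is to obtain every implication among (1)--(5) formally from the two identities $\mu_R = \counit_{R,R}\circ(\pi_R\otimes\id_R)$ and $\varepsilon_R\circ\pi_R = \id_R$ of \autoref{prp:ring:piUnit} and \autoref{prp:ring:eps}, together with \autoref{prp:ring:mu_inj} and the functoriality of the tensor product; the genuine content then lies in the two examples.

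The first thing I would record is that $\pi_R\otimes\id_R\colon R\otimes R\to\ihom{R,R}\otimes R$ is always a split order-embedding, since tensoring $\varepsilon_R\circ\pi_R=\id_R$ on the right with $\id_R$ yields $(\varepsilon_R\otimes\id_R)\circ(\pi_R\otimes\id_R)=\id_{R\otimes R}$ (here $\varepsilon_R\otimes\id_R$ is the generalized \CuMor{} obtained from the generalized \CuMor{} $\varepsilon_R$). Granting this, $(2)\Rightarrow(1)$ follows because $\mu_R=\counit_{R,R}\circ(\pi_R\otimes\id_R)$ is then a composite of an isomorphism and an order-embedding, hence injective, so \autoref{prp:ring:mu_inj} makes $\mu_R$ an isomorphism and $R$ solid; and, $\mu_R$ now being an isomorphism, $(2)\Rightarrow(3)$ is immediate from $\pi_R\otimes\id_R=\counit_{R,R}^{-1}\circ\mu_R$. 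Conversely, if (1) and (3) hold, the same identity rewrites as $\counit_{R,R}=\mu_R\circ(\pi_R\otimes\id_R)^{-1}$, a composite of isomorphisms, giving (2). The implication $(4)\Rightarrow(3)$ is simply that tensoring an isomorphism with $\id_R$ yields an isomorphism. Finally, $(4)\Leftrightarrow(5)$ I would deduce from $\varepsilon_R\circ\pi_R=\id_R$ and the fact (\autoref{prp:ring:R_sub_ihomRR}) that $\pi_R$ is an order-embedding: if $\pi_R$ is an isomorphism then $\varepsilon_R=\pi_R^{-1}$ is one, and if $\varepsilon_R$ is an isomorphism then $\pi_R=\varepsilon_R^{-1}$ is a bijective order-embedding, hence an isomorphism of \CuSgp{s}, since an order-isomorphism of \CuSgp{s} automatically preserves and reflects the way-below relation.

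For the examples I would argue as follows. By \autoref{exa:ring:M1}, $\pi_{M_1}$ is an isomorphism, so $M_1$ satisfies (4), and therefore also (3) and (5); that $M_1$ fails (1) --- and hence, by $(2)\Rightarrow(1)$, fails (2) --- comes down to showing that $\mu_{M_1}\colon M_1\otimes M_1\to M_1$ is not injective. Taking $1$ to be the compact unit of $M_1$ and $u$ the soft element of value $1$, the multiplication sends both $1\otimes u$ and $u\otimes u$ to $u$, and the task is to verify that $1\otimes u\neq u\otimes u$ in $M_1\otimes M_1$. For $\PPbar$: it is solid (see \cite{AntPerThi18:TensorProdCu}), so (1) holds, while \autoref{exa:ring:PPbar} identifies $\ihom{\PPbar,\PPbar}$ with $M_1$ and $\pi_{\PPbar}$ with the embedding of $\PPbar$ as the proper sub-\CuSgp{} of soft elements, so (4) (and thus (5)) fails. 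Since (1) holds, it then suffices to establish (3), after which (2) follows from ``(1) and (3) $\Rightarrow$ (2)''; and (3) amounts to identifying $\ihom{\PPbar,\PPbar}\otimes\PPbar\cong M_1\otimes\PPbar\cong\PPbar$ --- tensoring with $\PPbar$ merges the compact and soft parts of $M_1$ --- and checking that under this identification $\pi_{\PPbar}\otimes\id_{\PPbar}$ becomes the isomorphism $\mu_{\PPbar}$.

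The formal implications are routine. The main obstacle will be the example computations: showing that $1\otimes u$ and $u\otimes u$ are distinct in $M_1\otimes M_1$ (equivalently, that $M_1$ is not solid), and pinning down $M_1\otimes\PPbar$ together with the behaviour of $\pi_{\PPbar}\otimes\id_{\PPbar}$ under the resulting identification; both require a concrete analysis of the tensor products of these particular \CuSgp{s}.
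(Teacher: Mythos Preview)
Your argument for the implications is correct and essentially identical to the paper's: both rest on the factorisation $\mu_R=\counit_{R,R}\circ(\pi_R\otimes\id_R)$ from \autoref{prp:ring:piUnit}, the identity $\varepsilon_R\circ\pi_R=\id_R$ from \autoref{prp:ring:eps} (yielding that $\pi_R\otimes\id_R$ is an order-embedding), and then \autoref{prp:ring:mu_inj} together with the two-out-of-three principle for that factorisation. For the example claims the paper's proof does not argue anything further but simply defers to \autoref{exa:ring:M1} and \autoref{exa:ring:PPbar} as indicated in the statement, so your sketch of what still needs to be checked there is extra detail rather than a divergence in approach.
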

\begin{proof}
By \autoref{prp:ring:eps}, we have $\varepsilon_R\circ\pi_R = \id_R$.
It follows that $\varepsilon_R$ is an isomorphism if and only if $\pi_R$ is, which shows the equivalence of~(4) and~(5).
It is obvious that~(4) implies~(3).
To show that~(2) implies~(1), assume that $\counit_{R,R}$ is an isomorphism.
We have
\[
(\varepsilon_R\otimes\id_R)\circ(\pi_R\otimes\id_R) = \id_R\otimes\id_R,
\]
which shows that $\pi_R\otimes\id_R$ is an order-embedding.
Hence, $\counit_{R,R}\circ(\pi_R\otimes\id_R)$ is an order-embedding.
By \autoref{prp:ring:piUnit}, we have $\counit_{R,R}\circ(\pi_R\otimes\id_R)=\mu_R$, whence $\mu_R$ is an order-embedding.
By \autoref{prp:ring:mu_inj}, this implies that $R$ is solid.

Using again that $\counit_{R,R}\circ(\pi_R\otimes\id_R)=\mu_R$, if any two of the three maps $\counit_{R,R}$, $\pi_R\otimes\id_R$ and $\mu_R$ are isomorphisms, then so is the third.
This shows that~(2) implies~(3), and that the combination of~(1) and~(3) implies~(2).
\end{proof}

\begin{qst}
\label{qst:ring:charSolid}
Given a solid \CuSrg{} $R$, is the evaluation map $\counit_{R,R}\colon\ihom{R,R}\otimes R\to R$ an isomorphism?
\end{qst}

\begin{rmk}
\label{rmk:ring:charSolid}
Let $R$ be a solid \CuSrg{}.
The answer to \autoref{qst:ring:charSolid} is `yes' in the following cases:
\begin{enumerate}
\item
If the unit of $R$ is compact;
see \autoref{rmk:ring:ihomRSisS} below.
\item
If $R$ satisfies \axiomO{5} and \axiomO{6}.
This follows from the classification of solid \CuSrg{s} with \axiomO{5} obtained in \cite[Theorem~8.3.13]{AntPerThi18:TensorProdCu} which shows that each such \CuSrg{} is either isomorphic to $\PPbar$ or has a compact unit.
In either case, \autoref{qst:ring:charSolid} has a positive answer.
\end{enumerate}

In particular, a \CuSrg{} $R$ with compact unit is solid if and only if the evaluation map $\counit_{R,R}\colon\ihom{R,R}\otimes R\to R$ is an isomorphism.
\end{rmk}

\begin{thm}
\label{thm:Qmultiplication}
Let $R$ be a solid \CuSrg{} with compact unit, and let $S$ and $T$ be \CuSgp{s}.
Assume that $T$ is $R$-stable.
Then $\ihom{S,T}$ is $R$-stable, and hence $\ihom{S,T}\cong R\otimes\ihom{S,T}$.
\end{thm}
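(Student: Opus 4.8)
The plan is to reduce the statement to the results already obtained about induced actions, and then to invoke the characterization of $R$-stability for solid semirings. The point is that we do not need to construct anything genuinely new: $\ihom{S,T}$ will inherit an $R$-action from the $R$-action on $T$, and for a solid $R$ this is all that $R$-stability amounts to.

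\textbf{Constructing the action.} First I would unwind the hypothesis that $T$ is $R$-stable: by \autoref{dfn:ring:Rstable} this means $T$ carries a left $R$-action $\alpha\colon R\otimes T\to T$. Since $R$ is a $\CatCu$-semiring with \emph{compact} unit, \autoref{prp:ring:inducedLtAction} applies directly with this $R$, this $S$, and this $T$: the induced map $\alpha_S\colon R\otimes\ihom{S,T}\to\ihom{S,T}$ from \autoref{pgr:ring:inducedLtAction} is a left $R$-action on $\ihom{S,T}$. Concretely, on the level of paths, if $x=[(f_\lambda)_\lambda]\in\ihom{S,T}$ and $(r_\lambda)_\lambda$ is a path in $(R,\ll)$ with endpoint $r$, then $\alpha_S(r\otimes x)=[(r_\lambda f_\lambda)_\lambda]$, where $r_\lambda f_\lambda\colon S\to T$ sends $s$ to $r_\lambda\cdot f_\lambda(s)$ via the given $R$-action on $T$; this is exactly the formula recorded at the end of \autoref{pgr:ring:inducedLtAction}.

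\textbf{Concluding.} Next, since $R$ is solid it is in particular commutative by \autoref{prp:ring:mu_inj}, so left and right $R$-actions coincide and there is no ambiguity in saying that $\ihom{S,T}$ has an $R$-action. By \autoref{dfn:ring:Rstable}, this means precisely that $\ihom{S,T}$ is $R$-stable. Finally, applying \autoref{rmk:ring:Rstable} — which records \cite[Theorem~7.1.12]{AntPerThi18:TensorProdCu}, that a $\CatCu$-semigroup is $R$-stable if and only if it is isomorphic to its tensor product with $R$ — we obtain $\ihom{S,T}\cong R\otimes\ihom{S,T}$, as claimed.

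\textbf{Main obstacle.} There is no substantial obstacle: the proof is a bookkeeping reduction, and essentially everything has been proved beforehand. The only point that genuinely uses the full strength of the hypotheses is the compactness of the unit of $R$, which is what \autoref{prp:ring:inducedLtAction} requires (it is exactly what allows the constant path at $1_R$ to serve as a path with endpoint $1_R$, making $\alpha_S$ unital); removing that assumption is precisely the content of \autoref{qst:ring:charSolid}. So the theorem is best presented as an immediate corollary of \autoref{prp:ring:inducedLtAction} together with the characterization of $R$-stability in \autoref{rmk:ring:Rstable}.
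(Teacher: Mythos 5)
Your proof is correct and follows exactly the same route as the paper: the compact unit lets \autoref{prp:ring:inducedLtAction} equip $\ihom{S,T}$ with the induced left $R$-action, and solidity of $R$ then upgrades this to $R$-stability, with $\ihom{S,T}\cong R\otimes\ihom{S,T}$ following from \cite[Theorem~7.1.12]{AntPerThi18:TensorProdCu}. The extra detail you supply (unwinding the path-level formula for the action and noting commutativity via \autoref{prp:ring:mu_inj}) is accurate but not needed beyond what the paper's two-line argument already records.
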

\begin{proof}
Since the unit of $R$ is compact, it follows from \autoref{prp:ring:inducedLtAction} that $\ihom{S,T}$ has a left $R$-action.
Since $R$ is solid, this implies that $\ihom{S,T}$ is $R$-stable.
\end{proof}

\begin{lma}
\label{prp:ring:compRStoT}
Let $R$ be a solid \CuSrg{}, let $S$ and $T$ be \CuSgp{s}, and let $f,g\colon R\otimes S\to T$ be a generalized \CuMor{s}.
Assume that $T$ is $R$-stable.
Then $f\leq g$ if and only if $f(1\otimes a)\leq g(1\otimes a)$ for all $a\in S$.

If the unit of $R$ is compact, then $f\prec g$ if and only if $f(1\otimes a')\ll g(1\otimes a)$ for all $a',a\in S$ with $a'\ll a$.
\end{lma}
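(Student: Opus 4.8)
The plan is to reduce both equivalences to an $R$-linearity property of arbitrary generalized \CuMor{s} with domain $R\otimes S$. Since $R$ is solid and $T$ is $R$-stable, $T$ carries an $R$-action; write $\alpha\colon R\otimes T\to T$ for the induced \CuMor{} and $r\cdot t:=\alpha(r\otimes t)$. The key step — and, I expect, the only genuine obstacle — is the claim: \emph{every generalized \CuMor{} $h\colon R\otimes S\to T$ satisfies $h(r\otimes a)=r\cdot h(1\otimes a)$ for all $r\in R$ and $a\in S$.} Applied to $h=f$ and $h=g$, this makes the two equivalences fall out of the standard factorization property of the $\CatCu$-tensor product.

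To prove the claim I would compare the two generalized \CuMor{s} $\psi_1:=\alpha\circ(\id_R\otimes h)$ and $\psi_2:=h\circ(\mu_R\otimes\id_S)$ from $R\otimes R\otimes S$ to $T$ (using that tensoring a \CuMor{} with a generalized \CuMor{} again yields a generalized \CuMor, as recalled before \autoref{dfn:genProp:tensExt}). Solidity says $\mu_R$ is an isomorphism; since $\mu_R(1\otimes r)=r$ and $\mu_R$ is injective, its inverse is the map $\nu\colon R\to R\otimes R$, $r\mapsto 1\otimes r$, so $\mu_R\otimes\id_S$ is an isomorphism with inverse $\nu\otimes\id_S$. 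By construction $\psi_2\circ(\nu\otimes\id_S)=h$, and the unit axiom $1\cdot t=t$ of the $R$-action gives $\psi_1\circ(\nu\otimes\id_S)=h$ as well; hence $\psi_1=\psi_2$. Evaluating this identity at the simple tensor $r\otimes 1\otimes a$ and using $\mu_R(r\otimes 1)=r$ yields $r\cdot h(1\otimes a)=h(r\otimes a)$, which is the claim.

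Granting the claim, the forward implications are immediate: $f\le g$ gives $f(1\otimes a)\le g(1\otimes a)$ by definition of the pointwise order; and if $f\prec g$, then since $1_R$ is compact we have $1_R\ll 1_R$, so $1_R\otimes a'\ll 1_R\otimes a$ whenever $a'\ll a$ (the canonical bimorphism $\omega\colon R\times S\to R\otimes S$ preserves the joint way-below relation), whence $f(1_R\otimes a')\ll g(1_R\otimes a)$. For the converse implications I would use the factorization property from the proof of \autoref{prp:genProp:tensExt}: given $x'\ll x$ in $R\otimes S$, choose $r_k'\ll r_k$ in $R$ and $a_k'\ll a_k$ in $S$ with $x'\le\sum_{k}r_k'\otimes a_k'\le\sum_{k}r_k\otimes a_k\le x$. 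In the $\le$-case, writing $y:=\sum_k r_k\otimes a_k$ and using the claim, additivity, monotonicity of $\alpha$, and the hypothesis $f(1\otimes a_k)\le g(1\otimes a_k)$,
\[
f(x')\le f(y)=\sum_k r_k\cdot f(1\otimes a_k)\le\sum_k r_k\cdot g(1\otimes a_k)=g(y)\le g(x);
\]
taking the supremum over a $\ll$-increasing sequence with supremum $x$ (axiom \axiomO{2}), which $f$ preserves, gives $f\le g$. In the $\prec$-case, the joint way-below property of the action bimorphism $R\times T\to T$ upgrades $r_k'\ll r_k$ together with $f(1\otimes a_k')\ll g(1\otimes a_k)$ (the hypothesis, since $a_k'\ll a_k$) to $r_k'\cdot f(1\otimes a_k')\ll r_k\cdot g(1\otimes a_k)$; then \axiomO{3} and the chain
\[
f(x')\le\sum_k r_k'\cdot f(1\otimes a_k')\ll\sum_k r_k\cdot g(1\otimes a_k)=g\Bigl(\sum_k r_k\otimes a_k\Bigr)\le g(x)
\]
yield $f(x')\ll g(x)$, hence $f\prec g$. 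Apart from the $R$-linearity claim, the only point requiring care is that $f$ and $g$ are merely generalized \CuMor{s}, so I would invoke only properties available at that level: preservation of finite sums and of suprema of increasing sequences, the factorization property of $R\otimes S$, and the axioms \axiomO{2} and \axiomO{3} of $T$.
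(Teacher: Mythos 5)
Your proof is correct and follows essentially the same route as the paper: both arguments reduce the claim to checking on simple tensors $r\otimes a$ and then exploit the automatic $R$-linearity of generalized $\CatCu$-morphisms between $R$-stable semigroups, which is where solidity enters. The only difference is that the paper simply cites this linearity fact from \cite[Proposition~7.1.6]{AntPerThi18:TensorProdCu}, whereas you reprove it via the inverse $r\mapsto 1\otimes r$ of $\mu_R$; that is a perfectly sound (and standard) derivation of the same statement.
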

\begin{proof}
The forward implications are obvious.
To show the converse of the first statement, assume that $f(1\otimes a)\leq g(1\otimes a)$ for all $a\in S$.
To verify $f\leq g$, it is enough to show that $f(r\otimes a)\leq g(r\otimes a)$ for all $r\in R$ and $a\in S$.
Note that $R\otimes S$ and $T$ are $R$-stable.
Since $R$ is solid, every generalized $\CatCu$-morphism between $R$-stable $\CatCu$-semigroups is automatically $R$-linear; see \cite[Proposition~7.1.6]{AntPerThi18:TensorProdCu}.
Thus, given $r\in R$ and $a\in S$, we obtain
\[
f(r\otimes a)
= f(r(1\otimes a))
= r f(1\otimes a)
\leq r g(1\otimes a)
= g(r\otimes a).
\]

To show the converse of the second statement, assume that $f(1\otimes a')\ll g(1\otimes a)$ for all $a',a\in S$ with $a'\ll a$.
To verify $f\prec g$, it is enough to show that $f(r'\otimes a')\ll g(r\otimes a)$ for all $r',r\in R$ and $a',a\in S$ with $r'\ll r$ and $a'\ll a$.
Given such $r',r,a'$ and $a$, we use at the second step that multiplication in $R$ preserves the joint way-below relation, to deduce
\[
f(r'\otimes a')
= r' f(1\otimes a')
\ll r g(1\otimes a)
= g(r\otimes a). \qedhere
\]
\end{proof}

\begin{prp}
\label{prp:ring:ihomRSTisST}
Let $R$ be a solid \CuSrg{} with compact unit, and let $S$ and $T$ be \CuSgp{s}.
Assume that $T$ is $R$-stable.
Let $\alpha\colon S\to R\otimes S$ be the \CuMor{} given by $\alpha(a)=1\otimes a$, for $a\in S$.
Then the induced map $\alpha^*\colon\ihom{R\otimes S,T}\to\ihom{S,T}$ is an isomorphism.
\end{prp}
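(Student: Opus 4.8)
The plan is to exhibit $\alpha^*$ as the image under the functor $\tau$ of an isomorphism of $\CatQ$-semigroups, the crucial input being \autoref{prp:ring:compRStoT}. Since the unit of $R$ is compact, $\alpha\colon S\to R\otimes S$ is a genuine \CuMor{}, so precomposition with $\alpha$ defines a map
\[
\Phi\colon \CatCuMor[R\otimes S,T]\longrightarrow\CatCuMor[S,T],\qquad \Phi(f)=f\circ\alpha,
\]
which is a $\CatQ$-morphism for the relations $\prec$ on both sides (it preserves $\prec$ because $\alpha$ preserves $\ll$), and the induced \CuMor{} $\alpha^*\colon\ihom{R\otimes S,T}\to\ihom{S,T}$ is nothing but $\tau(\Phi)$, sending a class $[(f_\lambda)_\lambda]$ to $[(f_\lambda\circ\alpha)_\lambda]$. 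By functoriality of $\tau$ (see \autoref{rmk:Qcategory}) it thus suffices to show that $\Phi$ is an isomorphism of $\CatQ$-semigroups. Observe that $\Phi(f)(a)=f(1\otimes a)$ for all $a\in S$.

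First I would show that $\Phi$ is an order-embedding that also reflects $\prec$. This is exactly \autoref{prp:ring:compRStoT}, which applies since $R$ is solid with compact unit and $T$ is $R$-stable: for generalized \CuMor{s} $f,g\colon R\otimes S\to T$ one has $f\leq g$ if and only if $f(1\otimes a)\leq g(1\otimes a)$ for all $a\in S$, that is, if and only if $\Phi(f)\leq\Phi(g)$; and, using compactness of $1_R$, one has $f\prec g$ if and only if $f(1\otimes a')\ll g(1\otimes a)$ whenever $a'\ll a$ in $S$, that is, if and only if $\Phi(f)\prec\Phi(g)$. In particular $\Phi$ is injective. For surjectivity, let $\beta\colon R\otimes T\to T$ be an $R$-action on $T$ (one exists since $T$ is $R$-stable) and, given $g\in\CatCuMor[S,T]$, set $\Psi(g):=\beta\circ(\id_R\otimes g)$; this is a generalized \CuMor{} $R\otimes S\to T$ (a tensor product of a \CuMor{} with a generalized \CuMor{}, followed by a \CuMor{}), and $\Phi(\Psi(g))(a)=\Psi(g)(1\otimes a)=\beta(1\otimes g(a))=g(a)$ for all $a\in S$, so $\Phi(\Psi(g))=g$. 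Hence $\Phi$ is a bijective $\CatQ$-morphism which is an order-embedding and reflects $\prec$; its set-theoretic inverse $\Psi$ then automatically preserves addition, the zero element, the order, the relation $\prec$, and suprema of increasing sequences, so $\Phi$ is an isomorphism of $\CatQ$-semigroups. Applying $\tau$ gives that $\alpha^*=\tau(\Phi)$ is an isomorphism, as desired.

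The main obstacle is conceptual rather than computational: one must recognize that $\Phi$ is precisely $\alpha^*$ before passing to $\tau$, and that \autoref{prp:ring:compRStoT} is tailored to say that $\Phi$ is an order-embedding reflecting $\prec$. Here the two standing hypotheses play distinct roles: compactness of the unit of $R$ is what makes $\alpha$ an honest \CuMor{} (so that $\alpha^*$ has the concrete form used above), while solidity of $R$, together with the $R$-stability of $T$, is what powers \autoref{prp:ring:compRStoT}. Everything else --- that $\Phi$ is a $\CatQ$-morphism, that $\Psi(g)$ is a generalized \CuMor{}, and that $\Phi\circ\Psi$ is the identity (because $1_R$ acts as the identity in the $R$-action on $T$) --- is routine and needs no further input.
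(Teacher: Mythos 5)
Your proof is correct and follows essentially the same route as the paper: both identify $\alpha^*$ as $\tau$ applied to the precomposition map $f\mapsto f(1\otimes\freeVar)$ on $\CatCuMor[R\otimes S,T]$ and invoke \autoref{prp:ring:compRStoT} to see that this is an isomorphism of $\CatQ$-semigroups. In fact you supply slightly more detail than the paper, since \autoref{prp:ring:compRStoT} only yields that the map is an order-embedding reflecting $\prec$, and your explicit inverse $\Psi(g)=\beta\circ(\id_R\otimes g)$ fills in the surjectivity that the paper leaves implicit.
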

\begin{proof}
Consider $\alpha_\CatQ^*\colon\CatCuMor[R\otimes S,T]\to\CatCuMor[S,T]$ given by sending a generalized \CuMor{} $f\colon R\otimes S\to T$ to the generalized \CuMor{} $\alpha_\CatQ^*(f)$ given by
\[
\alpha_\CatQ^*(f)(a) = f(1\otimes a),
\]
for $a\in S$.
It follows from \autoref{prp:ring:compRStoT} that $\alpha_\CatQ^*$ is an isomorphism of $\CatQ$-semigroups.
Since $\alpha^*$ is obtained by applying the functor $\tau$ to $\alpha_Q^*$ (see the comments after \cite[Remark~5.4]{AntPerThi17arX:AbsBivariantCu}), it follows that $\alpha^*$ is an isomorphism, as desired.
\end{proof}

\begin{cor}
\label{prp:ring:ihomRSisS}
Let $R$ be a solid \CuSrg{} with compact unit, and let $T$ be an $R$-stable \CuSgp{}.
Then there is a natural isomorphism $\ihom{R,T}\cong T$.
\end{cor}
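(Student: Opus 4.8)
The plan is to obtain this as the instance $S=\NNbar$ of \autoref{prp:ring:ihomRSTisST}. Recall that $\NNbar$ is the unit object of the symmetric monoidal category $\CatCu$, so there is a natural $\CatCu$-isomorphism $\rtIso_R\colon R\otimes\NNbar\to R$; and by \autoref{prp:genProp:iIso} the map $\iIso_T\colon T\to\ihom{\NNbar,T}$ is a natural isomorphism. Note that \autoref{prp:ring:ihomRSTisST} requires only the \emph{second} variable to be $R$-stable, so we may take the first variable to be $\NNbar$ even though $\NNbar$ itself need not be $R$-stable.

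Applying \autoref{prp:ring:ihomRSTisST} with $S=\NNbar$ (using that $R$ is a solid \CuSrg{} with compact unit and that $T$ is $R$-stable), the \CuMor{} $\alpha\colon\NNbar\to R\otimes\NNbar$ with $\alpha(n)=1\otimes n$ induces an isomorphism $\alpha^*\colon\ihom{R\otimes\NNbar,T}\to\ihom{\NNbar,T}$. Composing with the two isomorphisms recalled above,
\[
\ihom{R,T}\xrightarrow{(\rtIso_R)^*}\ihom{R\otimes\NNbar,T}\xrightarrow{\alpha^*}\ihom{\NNbar,T}\xrightarrow{(\iIso_T)^{-1}}T,
\]
gives the desired isomorphism. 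Since the first arrow does not involve $T$, the second is the $T$-component of a natural transformation $\ihom{R\otimes\NNbar,\freeVar}\Rightarrow\ihom{\NNbar,\freeVar}$, and the third is the inverse of the natural transformation of \autoref{prp:genProp:iIso}, the composite is natural in $T$.

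There is essentially no real obstacle here: the substance is all in \autoref{prp:ring:ihomRSTisST}, and hence ultimately in \autoref{prp:ring:compRStoT}, which records that a generalized \CuMor{} out of $R\otimes S$ into an $R$-stable \CuSgp{} is determined by its values on $1\otimes S$. The only point that needs a moment's care is the bookkeeping allowing $S=\NNbar$ without assuming $\NNbar$ is $R$-stable, addressed above. If one wishes to make the isomorphism explicit, one can unwind the composite and check that it sends $x\in\ihom{R,T}$ to $\sigma_{R,T}(x)(1_R)$, i.e.\ to the evaluation of (the endpoint of) $x$ at the unit of $R$; but this description is not needed for the statement.
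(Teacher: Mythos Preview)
Your proof is correct and follows essentially the same route as the paper: apply \autoref{prp:ring:ihomRSTisST} with $S=\NNbar$ to obtain $\ihom{R\otimes\NNbar,T}\cong\ihom{\NNbar,T}$, and then combine with the natural identifications $R\otimes\NNbar\cong R$ and $\ihom{\NNbar,T}\cong T$ from \autoref{prp:genProp:iIso}. Your additional remarks on naturality and on the explicit form $x\mapsto\sigma_{R,T}(x)(1_R)$ are correct and go a bit beyond what the paper records, but the core argument is the same.
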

\begin{proof}
Applying \autoref{prp:ring:ihomRSTisST} for $S:=\NNbar$, we obtain $\ihom{R,T}\cong\ihom{\NNbar,T}$.
By \autoref{prp:genProp:iIso}, we have a natural isomorphism $\ihom{\NNbar,T}\cong T$.
\end{proof}

\begin{rmk}
\label{rmk:ring:ihomRSisS}
Let $R$ be a solid \CuSrg{} with compact unit.
Since $R$ is $R$-stable itself, it follows from \autoref{prp:ring:ihomRSisS} that $\ihom{R,R}\cong R$.
It follows that the evaluation map $\counit_{R,R}\colon\ihom{R,R}\otimes R\to R$ is an isomorphism.

For the solid \CuSrg{} $\PPbar$, we have seen in \cite[Proposition 5.13]{AntPerThi17arX:AbsBivariantCu} that $\ihom{\PPbar,\PPbar}\cong M_1 \ncong \PPbar$.
This shows that \autoref{prp:ring:ihomRSTisST} and \autoref{prp:ring:ihomRSisS} cannot be generalized to solid \CuSrg{s} without compact unit.
\end{rmk}




\providecommand{\etalchar}[1]{$^{#1}$}
\providecommand{\bysame}{\leavevmode\hbox to3em{\hrulefill}\thinspace}
\providecommand{\noopsort}[1]{}
\providecommand{\mr}[1]{\href{http://www.ams.org/mathscinet-getitem?mr=#1}{MR~#1}}
\providecommand{\zbl}[1]{\href{http://www.zentralblatt-math.org/zmath/en/search/?q=an:#1}{Zbl~#1}}
\providecommand{\jfm}[1]{\href{http://www.emis.de/cgi-bin/JFM-item?#1}{JFM~#1}}
\providecommand{\arxiv}[1]{\href{http://www.arxiv.org/abs/#1}{arXiv~#1}}
\providecommand{\doi}[1]{\url{http://dx.doi.org/#1}}
\providecommand{\MR}{\relax\ifhmode\unskip\space\fi MR }
\providecommand{\MRhref}[2]{%
	\href{http://www.ams.org/mathscinet-getitem?mr=#1}{#2}
}
\providecommand{\href}[2]{#2}

\end{document}